\newtheorem{Th}{Theorem}
\newtheorem{Le}{Lemma}
\newtheorem{Def}{Definition}
\newtheorem{Cor}{Corollary}
\newtheorem{Rem}{Remark}
\newtheorem{Prop}{Proposition}
\newcommand{\hreff}[1]{\hyperref[#1]{\ref{#1}}}
\DeclareMathOperator{\sign}{sign}
\newcommand{\av}[2]{\langle {#1}\rangle_{{}_{#2}}}
\newcommand{\Bell}{\mathbf{B}}
\newcommand{\df}{\buildrel\mathrm{def}\over=}
\newcommand{\s}{\mathbf}
\newcommand{\eps}{\varepsilon}
\begin{document}

\title{Inequality for Burkholder's martingale transform}

\author{Paata Ivanisvili}
\address{Department of Mathematics,  Michigan State University, East
Lansing, MI 48824, USA}
\email{ivanisvi@math.msu.edu}
\urladdr{http://math.msu.edu/~ivanisvi}

\makeatletter
\@namedef{subjclassname@2010}{
  \textup{2010} Mathematics Subject Classification}
\makeatother

\subjclass[2010]{42B20, 42B35, 47A30}



%
%

\keywords{Martingale transform, Martingale inequalities,  Bellman function, Monge--Amp\`ere equation, Concave envelopes, Developable surface, Torsion}

   \begin{abstract}
We find the sharp constant $C=C(\tau,p, \mathbb{E}G, \mathbb{E}F)$ of the following inequality 
$\|(G^{2}+ \tau^{2} F^{2})^{1/2}  \|_{p} \leq C \|F\|_{p},$
where  $G$ is the transform of a martingale $F$ under a predictable sequence $\eps$ with absolute value 1, $1<p< 2$, and $\tau$ is any real number.
\end{abstract}

\date{}
\maketitle
\tableofcontents
\newpage

\setcounter{equation}{0}
\setcounter{Th}{0}


\section{Introduction} 

Let $I$ be an interval of the real line $\mathbb{R}$, and let  $|I|$  be its Lebesgue length. By symbol $\mathcal{B}$ we denote the  $\sigma$-algebra of Borel subsets of $I$.  
Let $\{ F_{n}\}_{n=0}^{\infty}$ be a martingale on the probability space $(I, \mathcal{B},dx/|I|)$ with a filtration $\{I,\emptyset\} = \mathcal{F}_{0} \subset \mathcal{F}_{1} \subset ... \subset \mathcal{F}$. Consider any sequence of functions $\{\eps_{n}\}_{n=1}^{\infty}$ such that for each $n \geq 1$, $\eps_{n}$ is $\mathcal{F}_{n-1}$ measurable and $|\eps_{n}| \leq 1$. Let $G_{0}$ be a constant function on $I$;  for any $n \geq 1$, let $G_n$ denote
\begin{align*}
G_{0} + \sum_{k=1}^{n}\eps_{k}(F_{k}-F_{k-1}).
\end{align*} 
The sequence $\{G_{n}\}_{n=0}^{\infty}$  is called the \emph{martingale transform} of $\{F_{n}\}$. Obviously $\{G_{n}\}_{n=0}^{\infty}$ is a martingale with the same filtration $\{ \mathcal{F}_{n}\}_{n=0}^{\infty}$. Note that since $\{F_{n}\}$ and $\{G_{n}\}$ are martingales, we have $F_{0}=\mathbb{E}F_{n}$ and $G_{0}=\mathbb{E}G_{n}$ for any $n\geq 0$. 

In~\cite{Bu1} Burkholder proved that if  $|G_{0}|\leq |F_{0}|,$  $1<p<\infty$,  then we have the sharp estimate
\begin{align}\label{bure}
\left \| G_{n}\right\| _{L^{p}}\leq (p^{*}-1)\|F_{n}\|_{L^{p}}\quad \text{for all} \quad  n \geq 0,
\end{align}
where $p^{*}-1 = \max\{p-1, \frac{1}{p-1} \}.$ Burkholder showed that it is sufficient to prove  inequality (\ref{bure})  for the sequences of numbers $\{\eps_{n}\}$ such that $\eps_{n} = \pm 1$ for all   $n\geq 1$. It was also mentioned that such an estimate as (\ref{bure}) does not depend on the choice of filtration  $\{\mathcal{F}_{n}\}$. For example, one can consider only the dyadic filtration. For more information on the estimate (\ref{bure}) we refer the reader to~\cite{Bu1}, ~\cite{Choi}.

In~\cite{VaVo1} the result was slightly generalized by Bellman function technique and Monge--Amp\`ere equation, i.e., the estimate (\ref{bure}) holds if and only if 
\begin{align}\label{convavo}
|G_{0}| \leq (p^{*}-1)|F_{0}|.
\end{align}

In what follows we assume that $\{ \eps_{n}\}$ is a predictable sequence of functions such that $|\eps_{n}| =1$. 

In~\cite{BJV}, a perturbation of the martingale transform was investigated. Namely, under the same assumptions as (\ref{convavo}) it was proved that for $2\leq p<\infty$, $\tau \in \mathbb{R},$ we have the sharp estimate
\begin{align}\label{bor}
\|(G_{n}^{2}+\tau^{2}F_{n}^{2})^{1/2}\|_{L^{p}} \leq ((p^{*}-1)^{2}+\tau^{2})^{1/2} \|F_{n}\|_{L^{p}}, \quad \text{for all} \quad  n \geq 0.
\end{align}
 It was also claimed to be proven  that the same sharp estimate holds for $1<p<2$, $|\tau| \leq 0.5$, and the case $1<p<2, \;|\tau|>0.5$ was left open.
 
 The inequality  (\ref{bor}) stems from important questions concerning the $L^{p}$ bounds for the perturbation of Beurling--Ahlfors operator and hence it is of interest. We refer the reader to recent works regarding martingale inequalities and estimates of Beurling--Ahlfors operator \cite{RBPJ, RBPJM, RBAO, RBGW, BJV} and references therein.

We should mention that Burkholder's method~\cite{Bu1} and the Bellman function approach~\cite{VaVo1},~\cite{BJV} have similar traces  in the sense that both of them reduce the required estimate to finding  a certain minimal diagonally concave function with prescribed boundary conditions. However, the methods of construction of such a function are different. Unlike Burkholder's method~\cite{Bu1}, in~\cite{VaVo1} and ~\cite{BJV} the construction of the function is based on the Monge--Amp\`ere equation.

\subsection{Our main results}

Firstly, we should mention that the proof of (\ref{bor}) presented in~\cite{BJV} has a gap in the case $1<p<2$, $0<|\tau|\leq 0.5$ (the constructed function does not satisfy necessary concavity condition).

 In the present paper  we obtain the sharp $L^p$ estimate of the perturbed martingale transform for the  remaining case $1<p<2$ and for all $\tau \in \mathbb{R}$. Moreover, we do not require condition (\ref{convavo}).

We define
\begin{align*}
u(z) \df 
\tau^{p}(p-1)\left(\tau^{2}+z^{2}\right)^{(2-p)/2}-\tau^{2}(p-1)+(1+z)^{2-p}-z(2-p)-1.
\end{align*}

\begin{Th}\label{fm1}

Let $1 <p<2,$ and let  $\{G_{n}\}_{n=0}^{\infty}$ be a martingale transform of $\{F_{n}\}_{n=0}^{\infty}.$ Set $\beta = \frac{|G_{0}|-|F_{0}|}{|G_{0}|+|F_{0}|}$. The following estimates are sharp:
\begin{itemize}
\item[1.]
If $u\left(\frac{1}{p-1}\right) \leq 0$ then
\begin{align*}
\| (\tau^{2}F_{n}^{2}+G_{n}^{2})^{1/2}\|_{L^{p}} \leq \left(\tau^{2}+\max\left\{\left|\frac{G_{0}}{F_{0}}\right|,\frac{1}{p-1}\right\}^{2} \right)^{\frac{1}{2}}\|F_{n}\|_{L^{p}}, \quad \text{for all} \quad  n \geq 0.
\end{align*} 
\item[2.]
If $u\left(\frac{1}{p-1}\right) > 0$ then
\begin{align*}
\|(\tau^{2}F_{n}^{2}+G_{n}^{2})^{1/2}\|_{L^{p}}^{p} \leq C(\beta)\|F_{n}\|_{L^{p}}^{p}, \quad \text{for all} \quad  n \geq 0,
\end{align*}
where  $C(\beta)$ is continuous  nondecreasing, and it is  defined as follows: 
\begin{align*}
C(\beta)\df 
\displaystyle
\begin{cases} 
\left(\tau^{2}+\frac{|G_{0}|^{2}}{|F_{0}|^{2}} \right)^{p/2}, & \beta \geq s_{0};\\
\displaystyle
\frac{\tau^{p}}{1-\frac{2^{2-p}(1-s_{0})^{p-1}}{(\tau^{2}+1)(p-1)(1-s_{0})+2(2-p)}}, & \beta \leq  -1+\frac{2}{p};\\
\displaystyle
C(\beta), & \beta \in (-1+2/p, s_{0});\\
\end{cases}
\end{align*}
where $s_{0} \in (-1+2/p,1)$ is the solution of the equation $u\left( \frac{1+s_{0}}{1-s_{0}}\right)=0.$
\end{itemize}
\end{Th}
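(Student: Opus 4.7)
The plan is to adapt the Bellman function technique of \cite{VaVo1,BJV}. The essential task is to construct a function $B=B(x,y,z)$ on the domain $\Omega=\{(x,y,z)\in\mathbb{R}^{3}:|x|^{p}\le z\}$ satisfying two properties: (a) the boundary majorization $B(x,y,|x|^{p})\ge(\tau^{2}x^{2}+y^{2})^{p/2}$, and (b) a diagonal concavity condition stating that along every admissible direction $(a,b,\Delta z)$ with $|b|\le|a|$ and with $z+\Delta z\ge|x+a|^{p}$, the restriction of $B$ is concave. Given such a $B$, a standard martingale induction (which uses $|\eps_{n}|=1$ to force $|b|=|a|$ and the martingale identity $\mathbb{E}(F_{n}-F_{n-1}\mid\mathcal{F}_{n-1})=0$) yields $\mathbb{E}\,B(F_{n},G_{n},|F_{n}|^{p})\le B(F_{0},G_{0},|F_{0}|^{p})$, from which the claimed $L^{p}$ estimate follows. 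The sharp constant is then simply the value of $B$ at the initial data, which in the symmetrized form depends only on the parameter $\beta$.

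Following the Monge--Amp\`ere philosophy, I would seek $B$ whose Hessian has rank one in its smooth regions, so that its graph is a developable surface foliated by segments along which $B$ is affine. This is the source of both the explicit formulas and the case analysis. The auxiliary function $u(z)$ in the statement is, I expect, precisely the obstacle that appears when one tests the single-piece ansatz of \cite{BJV} in the regime $1<p<2$: it measures the failure of concavity along the critical Burkholder ruling $z=1/(p-1)$. If $u(1/(p-1))\le 0$, the single-piece construction persists and produces Case~1. If $u(1/(p-1))>0$, a second family of rulings must be introduced in a subdomain; compatibility of the two families then forces $u((1+s_{0})/(1-s_{0}))=0$, which both identifies the interface parameter $s_{0}$ and produces the three-piece formula for $C(\beta)$ in Case~2.

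The verification then splits into writing $B$ explicitly in each subregion (the closed forms being read off the foliation), checking diagonal concavity by direct differentiation, verifying $C^{1}$-matching across the interface $\beta=s_{0}$, and checking the boundary inequality on $z=|x|^{p}$. Sharpness is obtained case by case by constructing dyadic martingales that propagate along the rulings of the developable surface and thereby saturate the Bellman inequality asymptotically. I expect the main obstacle to be establishing diagonal concavity of $B$ in Case~2 in a neighborhood of the interface $s_{0}$: this is precisely the step where \cite{BJV} had a gap in the overlapping regime $1<p<2$, $0<|\tau|\le 0.5$. The delicacy is that diagonal concavity is required only along the restricted two-parameter family of admissible directions, so the Monge--Amp\`ere rigidity $\det B''=0$ must be used with care, and the gluing of the two foliations must be checked one direction at a time rather than via the full Hessian.
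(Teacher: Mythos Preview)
Your high-level framework is right: one must build a minimal diagonally concave $B$ on $\Omega$ with the prescribed boundary data, then recover sharpness by constructing near-extremal martingales. But the concrete picture you sketch of the candidate $B$ and of the role of $u$ does not match what actually has to be done, and the missing piece is substantial enough that the proposal would not close.

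First, the case $u(1/(p-1))\le 0$ is \emph{not} handled by the ``single-piece'' construction of \cite{BJV}; the paper states explicitly that the BJV argument for $1<p<2$ has a gap (the proposed function there is not diagonally concave). So you cannot simply quote that ansatz in Case~1. In both cases the candidate is built from several distinct foliations glued together: after the homogeneity/symmetry reduction to a two-dimensional problem on $\Omega_{3}=\{(y_{2},y_{3}):-1\le y_{2}\le 1,\ (1-y_{2})^{p}\le y_{3}\}$, one needs a \emph{cup} foliation of chords joining pairs of boundary points $(a(s),g(a(s)))$ and $(s,g(s))$, an angular linearity region at the top of the cup, and then either a family of oblique segments to the wall $y_{2}=-1$ followed by vertical rays (Case~1) or vertical rays directly (Case~2). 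The cup itself is dictated by the torsion of the boundary curve $(s,g(s),f(s))$: one shows the torsion changes sign exactly once, and the pairing $s\mapsto a(s)$ is obtained from a $3\times 3$ determinant equation (the ``cup equation''). The function $u$ is not a concavity obstruction along the Burkholder ruling; it is exactly the cup equation evaluated at the pair $(-1,s_{0})$, so $u((1+s_{0})/(1-s_{0}))=0$ locates the top chord of the cup, and $u(1/(p-1))\lessgtr 0$ is equivalent to $s_{0}\lessgtr -1+2/p$, which is what decides whether the oblique foliation $\Theta([s_{0},y_{p}),g)$ exists. Without the cup construction and the associated force-function machinery you have no candidate to test, and your description of the two cases as ``one family of rulings versus two'' is not the correct dichotomy.

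Second, the verification you flag as the main obstacle (diagonal concavity near the interface) is, in the paper's execution, not where the work lies. Once the 2D candidate $\Bell$ is built from the cup and its extensions, its concavity in $\Omega_{3}$ follows from a general force-function criterion; the genuinely delicate step is checking concavity in the \emph{other} diagonal direction after pulling $\Bell$ back to three variables via homogeneity, which requires a case-by-case computation of $N''_{y_{1}y_{1}}N''_{y_{3}y_{3}}-(N''_{y_{1}y_{3}})^{2}$ on each foliation region. Finally, the sharpness argument is not routine in Case~2: since the foliation there has no oblique segments reaching the wall $y_{2}=-1$, the extremizers are produced by an iterative splitting/rescaling scheme rather than by a single explicit martingale, and this needs its own analysis. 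Your proposal does not contain these ingredients.
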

Explicit expression for the function $C(\beta)$ on the interval $(-1+2/p,s_{0})$ was hard to present in a simple way.  The reader can find the value of the function $C(\beta)$  in Theorem~\ref{fullth}, part $\textup{(ii)}$. 

\begin{Rem}
The condition $u\left(\frac{1}{p-1}\right) \leq 0$ holds when $|\tau| \leq 0.822$. So we also obtain Burkholder's result in the limit case when $\tau = 0$. It is worth mentioning that although the proof of the estimate (\ref{bor})  has a gap in \cite{BJV}, the claimed result in the case $1<p<2$, $|\tau|<0.5$ remains true as a result of Theorem \ref{fm1}.
\end{Rem}

One of the important results  of the current paper is that we  find the function (\ref{bellmanf}), and the above estimates are corollaries of  this result. We would like to mention that unlike~\cite{VaVo1} and~\cite{BJV} the argument exploited in the  current paper is different. Instead of writing a lot of technical computations and checking which case is valid, we present some pure geometrical facts regarding minimal concave functions with prescribed boundary conditions, and by this way we avoid  computations. Moreover, we explain to the reader  how  we construct our Bellman function (\ref{bellmanf}) based on these geometrical facts derived in Section~\ref{geom}.

\subsection{Plan of the paper}
In Section~\ref{sec2} we formulate results about how to reduce the estimate (\ref{bor}) to finding of a certain function with required properties. 
 These results are  well-known and can be found in ~\cite{BJV}. A slightly different function was investigated in ~\cite{VaVo1}, however, it possesses almost the same properties and the proof works exactly in the same way.  We only mention these results and the fact that we look for a minimal continuous  diagonally concave function $H(x_{1},x_{2},x_{3})$ (see Definition~\ref{biconcavefunction}) in the domain $\Omega = \{(x_{1},x_{2},x_{3}) \in \mathbb{R}^{3} : |x_{1}|^{p} \leq x_{3} \}$ with the boundary condition $H(x_{1},x_{2},|x_{1}|^{p}) = (x_{2}^{2}+\tau^{2}x_{1}^{2})^{p/2}$.

 Section \ref{geom} is devoted to the investigation of the minimal concave functions in two variables. It is worth mentioning that the first crucial steps in this direction for some special cases were made in~\cite{iOSvz} (see also \cite{iOSvz1, iOSvz2}). In Section \ref{geom} we develop this theory for a slightly more general case. We investigate some special foliation called the \emph{cup} and  another useful object, called  \emph{force functions}. 
 
 We should note that the theory of minimal concave functions in two variables does not include the minimal diagonally concave functions in three variables. Nevertheless, this  knowledge allows us to construct the candidate for $H$ in Section \ref{construction}, but with some additional technical work not mentioned in Section \ref{geom}. 
 
In section  \ref{sharp} we find the good estimates for the perturbed martingale transform. In Section \ref{fol}  we prove that the candidate for $H$ constructed in Section~\ref{construction} coincides with $H$, and as a corollary  we show the sharpness of the estimates found for the perturbed martingale transform in Section~\ref{sharp}.
 
In conclusion,  the reader can note that the hard technical part of the current paper lies in the construction of the minimal diagonally concave function in three variables with the given boundary condition. 

\section{Definitions and known results}\label{sec2}



Let  $\mathbb{E} F \df \av{F}{I}$
where 
\begin{align*}
\av{F}{J} \df \frac{1}{|J|}\int_{J}F(t)dt
\end{align*}

for any interval $J$ of the real line. Let $F$ and $G$ be real valued integrable functions. Let $G_{n}=\mathbb{E}(G|\mathcal{M}_{n})$ and $F_{n}=\mathbb{E}(F|\mathcal{M}_{n})$ for $n\geq 0$, where \{$\mathcal{M}_{n}$\} is a dyadic filtration (see \cite{BJV}).


 \begin{Def}
 If the martingale $\{G_{n}\}$  satisfies $|G_{n+1}-G_{n}|=|F_{n+1}-F_{n}|$ for each $n \geq 0$, then $G$ is called the martingale transform of  $F$.
\end{Def}
Recall that we are interested in the  estimate 
\begin{align}\label{espirveli}
\|(G^{2}+\tau^{2}F^{2})^{1/2}\|_{L^{p}} \leq C \|F\|_{L^{p}}.
\end{align}
We introduce the Bellman function

\begin{align}
H(\s{x}) \df  \sup_{F,G}\{ \mathbb{E}B(\varphi(F,G)),\; \mathbb{E}\varphi(F,G)=\s{x}, \; |G_{n+1}-G_{n}| = |F_{n+1}-F_{n}|,  n\geq 0 \}. \label{bellmanf}
\end{align} 
where $\varphi(x_{1},x_{2}) = (x_{1},x_{2},|x_{1}|^{p})$, $B(\varphi(x_{1},x_{2})) = (x_{2}^{2}+\tau^{2}x_{1}^{2})^{p/2}$, $\s{x} = (x_{1},x_{2},x_{3})$. 
\begin{Rem}
In what follows bold lowercase letters denote points in $\mathbb{R}^{3}$.
\end{Rem}
Then we see that the estimate (\ref{espirveli}) can be rewritten as follows:
\begin{align*}
H(x_{1},x_{2},x_{3}) \leq C^{p} x_{3}.
\end{align*}

We mention that the Bellman function $H$ does not depend on the choice of the interval $I$. Without loss of generality we may assume that $I=[0,1]$.


\begin{Def}\label{admissible}
Given a point  $\s{x}\in \mathbb{R}^{3}$, a pair $(F,G)$ is said to be admissible for $\s{x}$ if $G$ is the martingale transform of $F$ and $\mathbb{E} (F,G,|F|^{p})=\s{x}$.
\end{Def}
\begin{Prop}
The domain of  $H(\s{x})$ is  $\Omega = \{(x_{1},x_{2},x_{3}) \in \mathbb{R}^{3} : |x_{1}|^{p} \leq x_{3} \}$,  and  $H$ satisfies the boundary condition 
\begin{align}\label{boundarycondition}
H(x_{1},x_{2},|x_{1}|^{p}) =  (x_{2}^{2}+\tau^{2}x_{1}^{2})^{p/2}.
\end{align}
\end{Prop}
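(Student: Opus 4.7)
The plan is to split the statement into two parts — the identification of the domain as $\Omega$, and the boundary evaluation — and to derive both from Jensen's inequality applied to the strictly convex function $t \mapsto |t|^p$ (strict because $p > 1$).

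For the domain statement, I will first show that any admissible $\s{x}$ lies in $\Omega$: if $(F, G)$ is admissible for $\s{x}$, then Jensen's inequality gives $|x_1|^p = |\mathbb{E} F|^p \leq \mathbb{E} |F|^p = x_3$, so $\s{x} \in \Omega$. For the converse inclusion, given $\s{x} \in \Omega$ I will exhibit an admissible pair on $I = [0,1]$ with the prescribed expectations. Take $F(t) := x_1 - s$ on $[0, 1/2)$ and $F(t) := x_1 + s$ on $[1/2, 1]$, so that $\mathbb{E} F = x_1$ automatically. The map $s \mapsto \tfrac{1}{2}|x_1 - s|^p + \tfrac{1}{2}|x_1 + s|^p$ is continuous, equals $|x_1|^p$ at $s = 0$, and tends to $\infty$ as $s \to \infty$, so by the intermediate value theorem it equals $x_3$ for some $s_{*} \geq 0$. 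Setting $G_0 := x_2$ and $\eps_1 := 1$, so that $G = x_2 + F - x_1$, produces an admissible pair with $\mathbb{E}\varphi(F, G) = (x_1, x_2, x_3) = \s{x}$, and hence $\s{x}$ belongs to the domain of $H$.

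For the boundary identity I will use the equality case of Jensen's inequality. If $x_3 = |x_1|^p$, the Jensen estimate above is saturated, and strict convexity of $t \mapsto |t|^p$ forces $F = x_1$ almost surely. Consequently $F_n \equiv x_1$ for every $n \geq 0$, so $F_{n+1} - F_n \equiv 0$, and the martingale-transform condition $|G_{n+1} - G_n| = |F_{n+1} - F_n|$ collapses to $G_{n+1} = G_n$; hence $G \equiv G_0 = \mathbb{E} G = x_2$ almost surely. Therefore $B(\varphi(F, G)) = (G^2 + \tau^2 F^2)^{p/2}$ equals $(x_2^2 + \tau^2 x_1^2)^{p/2}$ almost surely for every admissible pair, so this common value is both the unique admissible value and the supremum defining $H(x_1, x_2, |x_1|^p)$.

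There is no real obstacle here: the statement is a setup proposition whose proof reduces entirely to Jensen's inequality and its strict equality case. The only minor point deserving care is the verification that the two-valued construction realizes every $x_3 \geq |x_1|^p$ without shifting the mean off $x_1$, and this is exactly the one-parameter intermediate value theorem argument sketched above.
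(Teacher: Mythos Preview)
Your proof is correct and complete. Note that the paper does not actually prove this proposition: Section~\ref{sec2} states it (together with Propositions~\ref{propbic} and~\ref{mazhorantochka}) as a known result and refers the reader to~\cite{BJV} for the details. Your argument --- Jensen's inequality for the inclusion of the domain in $\Omega$, a two-valued construction with an intermediate-value argument for the reverse inclusion, and the strict-convexity equality case of Jensen to pin down the boundary value --- is exactly the standard route one finds in that reference, so there is nothing to compare.
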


\begin{Def}\label{biconcavefunction}
A function $U$ is said to be  diagonally concave in $\Omega$, if  it is concave in both $\Omega \cap \{(x_{1},x_{2},x_{3}) : \, x_{1}+x_{2}=A \}$ and $\Omega \cap \{(x_{1},x_{2},x_{3}) : \, x_{1}-x_{2}=A \}$ for every constant $A \in \mathbb{R}$.
\end{Def}
\begin{Prop}\label{propbic}
$H(\s{x})$ is a diagonally concave function in $\Omega$. 
\end{Prop}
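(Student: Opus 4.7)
The plan is to run the standard Bellman-function splicing argument and to notice that the martingale-transform identity $|G_{n+1}-G_n|=|F_{n+1}-F_n|$ at the very first step survives the splicing precisely when the two points being interpolated lie on a common diagonal $\{x_1\pm x_2=\mathrm{const}\}$; that observation is what forces the concavity to be diagonal rather than full.

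Concretely, I would fix $A\in\mathbb{R}$ and $\s{x}',\s{x}''\in\Omega$ with $x'_1+x'_2=x''_1+x''_2=A$ (the slice $x_1-x_2=A$ is handled identically). For $\alpha\in(0,1)$ set $\s{x}=\alpha\s{x}'+(1-\alpha)\s{x}''$. Given $\eps>0$, pick admissible pairs $(F',G')$ for $\s{x}'$ and $(F'',G'')$ for $\s{x}''$ with $\mathbb{E}B(\varphi(F',G'))\geq H(\s{x}')-\eps$ and similarly for the double-primed pair. Splice them on $I=[0,1]$ by letting $(F,G)$ be the rescaled copy of $(F',G')$ on $[0,\alpha]$ and of $(F'',G'')$ on $[\alpha,1]$, with $\mathcal{F}_0=\{I,\emptyset\}$, $\mathcal{F}_1$ generated by $[0,\alpha]$, and $\mathcal{F}_{n+1}$ obtained by stacking the rescaled internal filtrations inside each piece. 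Linearity gives $\mathbb{E}\varphi(F,G)=\s{x}$ and $\mathbb{E}B(\varphi(F,G))=\alpha\mathbb{E}B(\varphi(F',G'))+(1-\alpha)\mathbb{E}B(\varphi(F'',G''))$. Steps $n\geq 2$ inherit the transform identity from the two pieces; at step $1$ on $[0,\alpha]$ one has $F_1-F_0=(1-\alpha)(x'_1-x''_1)$ and $G_1-G_0=(1-\alpha)(x'_2-x''_2)$ (with the mirror expressions on $[\alpha,1]$), so $|F_1-F_0|=|G_1-G_0|$ reduces to $x'_1-x''_1=\pm(x'_2-x''_2)$, which is exactly what the diagonal assumption $x'_1+x'_2=x''_1+x''_2$ supplies, with a common predictable sign $\eps_1=\pm 1$ on both halves. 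Thus $(F,G)$ is admissible for $\s{x}$; passing to the supremum and letting $\eps\to 0$ yields $H(\s{x})\geq\alpha H(\s{x}')+(1-\alpha)H(\s{x}'')$, the concavity along the slice.

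The main technical obstacle is that for generic $\alpha$ the partition $\{[0,\alpha],[\alpha,1]\}$ is not dyadic, so the spliced pair is not literally in the class used in the definition (\ref{bellmanf}) if one insists on the dyadic filtration. I would deal with this in one of two standard ways: first run the argument at $\alpha=\tfrac{1}{2}$, which is dyadic and gives midpoint concavity on each diagonal slice, and then upgrade to concavity via local boundedness/upper-semicontinuity of $H$, itself inherited from the boundary identity (\ref{boundarycondition}) through a simple truncation argument; or alternatively, invoke the fact recalled in the introduction from \cite{Bu1,BJV} that the supremum defining $H$ is unaffected when dyadic filtrations are replaced by arbitrary refining ones, after which the splicing is directly admissible for every $\alpha$. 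Either way the geometric input — that only the two diagonal directions are compatible with the transform constraint at the first splice — is the real content of the proposition.
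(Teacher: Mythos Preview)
Your argument is correct and is exactly the standard splicing proof for Bellman-function concavity; the key step---that the first increment condition $|F_1-F_0|=|G_1-G_0|$ after splicing reduces to $x'_1-x''_1=\pm(x'_2-x''_2)$, i.e.\ to the two points lying on a common diagonal, with a single predictable sign $\eps_1$ working on both halves---is precisely the content of the proposition.

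There is, however, nothing to compare against in the paper itself: Proposition~\ref{propbic} is listed in Section~\ref{sec2} among the ``known results'' and is quoted without proof, with a reference to~\cite{BJV} (and~\cite{VaVo1} for the analogous function). What you have written is essentially the argument those references give. Regarding the dyadic issue, the paper already takes for granted (see the introduction and the remark in Section~\ref{fol} citing~\cite{SV}) that the Bellman function is independent of the choice of filtration, so your second route---invoke filtration independence and splice at arbitrary $\alpha$---is the one the paper implicitly relies on and is cleaner than the midpoint-concavity upgrade.
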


\begin{Prop}\label{mazhorantochka}
If $U$ is a continuous diagonally concave function in $\Omega$ with boundary condition $U(x_{1},x_{2},|x_{1}|^{p}) \geq (x_{2}^{2}+\tau^{2}x_{1}^{2})^{p/2},$ then $U \geq H$ in $\Omega$. 
\end{Prop}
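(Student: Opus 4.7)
The plan is to establish the pointwise inequality $U(\s{x}) \geq \mathbb{E}B(\varphi(F,G))$ for every pair $(F,G)$ admissible for $\s{x}$, and then take the supremum in the definition (\ref{bellmanf}) of $H$. First I would reduce to the case where $F$ is a step function adapted to the dyadic filtration $\{\mathcal{M}_n\}$, so that the martingales $\{F_n\}$ and $\{G_n\}$ stabilize after finitely many levels $N$; the general $L^p$ case will follow by a routine truncation and approximation argument based on continuity of $U$.

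Fix such a simple admissible pair and define $\s{\Psi}_n(t) := (F_n(t),\, G_n(t),\, \mathbb{E}(|F|^p\mid \mathcal{M}_n)(t))$. Conditional Jensen gives $|F_n|^p \leq \mathbb{E}(|F|^p\mid \mathcal{M}_n)$, so $\s{\Psi}_n(t) \in \Omega$ for every $t$; moreover $\s{\Psi}_0 \equiv \s{x}$ by the hypothesis $\mathbb{E}\varphi(F,G) = \s{x}$, while $\s{\Psi}_N = (F, G, |F|^p)$ once the martingales stabilize.

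The key geometric observation is that on each level-$n$ dyadic interval $J$ with children $J^{-}, J^{+}$, the three constant values $\s{\Psi}_n|_J$, $\s{\Psi}_{n+1}|_{J^{-}}$ and $\s{\Psi}_{n+1}|_{J^{+}}$ are collinear with $\s{\Psi}_n|_J$ their midpoint, and their common direction has the form $(1,\eps,\ast)$ with $\eps = \pm 1$: the identity $|G_{n+1}-G_n|=|F_{n+1}-F_n|$ forces the second-coordinate jump to equal $\pm$ the first-coordinate jump. Thus all three points lie in the plane $\{x_1 - \eps\, x_2 = \mathrm{const}\}$, and diagonal concavity of $U$ yields
\[
U(\s{\Psi}_n|_J) \;\geq\; \tfrac{1}{2}\,U(\s{\Psi}_{n+1}|_{J^{-}}) \;+\; \tfrac{1}{2}\,U(\s{\Psi}_{n+1}|_{J^{+}}).
\]
Weighting by $|J|$ and summing over all $J$ at level $n$ gives $\mathbb{E}\,U(\s{\Psi}_n) \geq \mathbb{E}\,U(\s{\Psi}_{n+1})$; iterating from $n=0$ to $n=N-1$ and then invoking the boundary hypothesis on $U$ produces
\[
U(\s{x}) \;=\; U(\s{\Psi}_0) \;\geq\; \mathbb{E}\,U(F,G,|F|^p) \;\geq\; \mathbb{E}(G^2 + \tau^2 F^2)^{p/2}.
\]
Taking the supremum over admissible pairs then yields $U(\s{x}) \geq H(\s{x})$.

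The main obstacle is the extension from simple admissible pairs to general ones: one needs $\s{\Psi}_n \to (F,G,|F|^p)$ almost everywhere together with enough uniform control to pass the limit through the expectation. This can be arranged by truncating $F$ and $G$, approximating by their conditional expectations onto finite dyadic $\sigma$-algebras, and exploiting continuity of $U$ on compact subsets of $\Omega$. The concavity-in-a-diagonal-plane step itself is entirely clean once the collinearity structure above is identified.
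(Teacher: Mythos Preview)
The paper does not supply its own proof of this proposition; it is stated as a known result and referred to~\cite{BJV} (see the opening paragraph of Section~\ref{sec2}). Your argument is exactly the standard Bellman-function supermartingale technique used there: track the process $\s{\Psi}_n=(F_n,G_n,\mathbb{E}(|F|^p\mid\mathcal{M}_n))$, observe that the martingale-transform constraint forces each dyadic split to lie in a diagonal section of $\Omega$, and apply diagonal concavity of $U$ level by level. The verification that the two children share the same sign $\eps$ (so that parent and both children are collinear in a single diagonal plane) follows, as you implicitly use, from the martingale property of $G$; this is worth writing out explicitly. The only genuinely delicate point is the passage from simple pairs to general admissible pairs, which you correctly flag; the cleanest route is to note that the supremum defining $H$ is already attained over simple pairs, rather than to push a limit through $\mathbb{E}\,U(\s{\Psi}_n)$.
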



We explain our strategy of finding the Bellman function $H$.
We are going to find a minimal candidate $B$, that is  continuous, diagonally concave, with the fixed boundary condition $B|_{\partial \Omega} = (y^{2}+\tau^{2}x^{2})^{p/2}$. We warn the reader that the symbol $B$ denoted boundary data previously, however, in Section~\ref{fol} we are going to use symbol $B$ as the candidate for the minimal diagonally concave function. Obviously $ B\geq H$ by Proposition~\ref{mazhorantochka}.  We will also see that given $\s{x} \in \Omega$ and any $\eps >0$, we can construct an admissible pair  $(F,G)$  such that
$B(\s{x})<\mathbb{E}(F^{2}+\tau^{2}G^{2})^{p/2}+\eps$. This will show that $B \leq H$ and hence $B = H$.

In order to construct the minimal candidate $B$, we have to elaborate few preliminary concepts from differential geometry. We introduce notion of \emph{foliation}  and  \emph{force} functions.
\section{Homogeneous Monge--Amp\`ere equation and minimal concave functions} \label{geom}

\subsection{Foliation}
Let   $g(s) \in C^{3}(I)$  be such that $g''>0$, and let  $\Omega$ be a convex  domain which is  bounded by the curve $(s,g(s))$ and the tangents that pass through the end-points of the curve (see Figure~\ref{fig:basic}).
 Fix some function $f(s) \in C^{3}(I)$. The first question we ask is the following: how the minimal concave function $\Bell(x_{1},x_{2})$ with boundary data $\Bell(s,g(s)))=f(s)$ looks \emph{locally} in a subdomain of $\Omega$. In other words,  take a convex hull of the curve  $(s,g(s),f(s)), s \in I,$ then the question is how the boundary of this convex hull looks like. 

We recall that the concavity is equivalent to the following inequalities: 
\begin{align}
\det (\mathrm{d}^{2} \Bell) &\geq 0, \label{gg}\\
  \Bell''_{x_{1}x_{1}}+\Bell''_{x_{2}x_{2}} &\leq 0.\label{gm}
\end{align}
The expression (\ref{gg}) is the Gaussian curvature of the surface $(x_{1},x_{2},\Bell(x_{1},x_{2}))$ up to a positive factor $(1+(\Bell'_{x_{1}})^{2}+(\Bell'_{x_{2}})^{2})^{2}$. So in order to minimize the function $\s{B}(x_{1},x_{2})$, it is reasonable to minimize the Gaussian curvature. Therefore, we will 
look for a  surface with zero Gaussian curvature. Here the homogeneous Monge--Amp\`ere equation arises.  These surfaces are known as \emph{developable surfaces} i.e., such a surface can be constructed by bending a plane region. The important property of such surfaces is that they consist of line segments, i.e., the function $\Bell$ satisfying homogeneous Monge--Amp\`ere equation $\det (\mathrm{d}^{2} \Bell)=0$ is linear along some {\em family of segments}. These considerations lead us to investigate such functions $\Bell$.  Firstly, we define a {\em foliation}.  For any segment $\ell$ in the Euclidean space by symbol $\ell^{\circ}$ we denote an open segment i.e., $\ell$ without endpoints. 

\begin{wrapfigure}[16]{r}{0pt}
\includegraphics[scale=0.8]{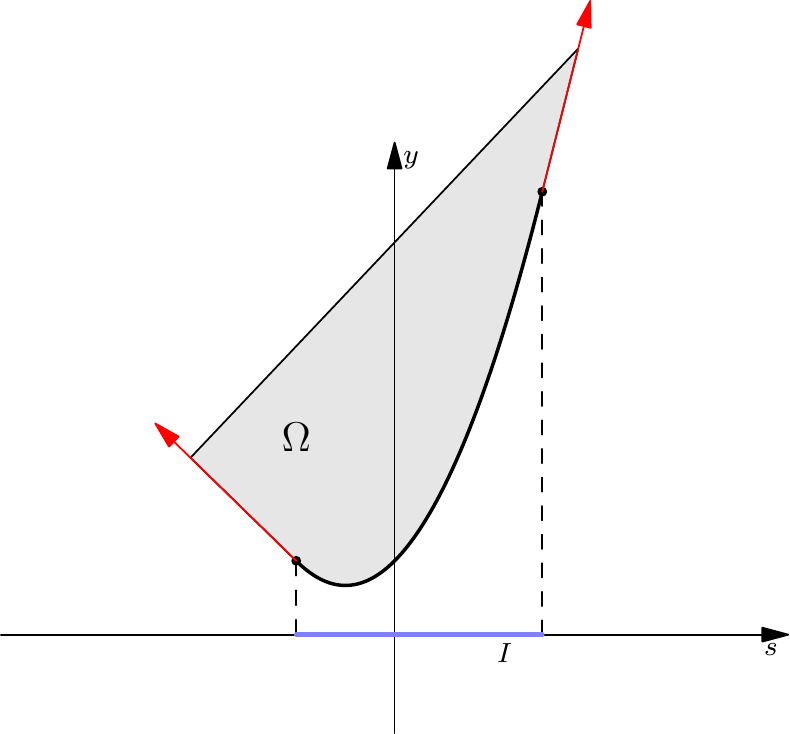}
\caption{Domain $\Omega$}
\label{fig:basic}
\end{wrapfigure}
Fix any subinterval $J \subseteq I$. By symbol $\Theta(J,g)$ we denote an arbitrary set of nontrivial segments (i.e. single points are excluded)  in $\mathbb{R}^{2}$  with the following requirements:
\begin{itemize}
\item[1.] For any  $\ell \in \Theta(J,g)$ we have  $\ell^{\circ} \in \Omega$.
\item[2.] For any $\ell_{1}, \ell_{2} \in \Theta(J,g)$ we have $\ell_{1} \cap \ell_{2} = \emptyset$.  
\item[3.] For any $\ell \in \Theta(J,g)$ there exists only one point  $s\in J$  such that $(s,g(s))$ is one of the end-points of  the segment $\ell$ and vice versa, for any point $s\in J$ there exists $\ell \in \Theta(J,g)$ such that $(s,g(s))$ is one of the end-points of the segment $\ell$.
\item[4.] There exists $C^{1}$ smooth argument function $\theta(s)$.
\end{itemize}

We explain the meaning of the requirement 4. To each point $s \in J$ there  corresponds only one segment $\ell \in \Theta(J,g)$ with an endpoint $(s,g(s))$. Take a nonzero vector with initial point $(s,g(s))$, parallel to the segment $\ell$ and having an endpoint in $\Omega$. We define the value of  $\theta(s)$ to be an argument of this vector. Surely argument is defined up to additive number $2\pi k$ where $k \in \mathbb{Z}$. Nevertheless, we take any representative from these angles. We do the same for all other points $s \in I$. In this way we get a family of functions $\theta(s)$. If there exists $C^{1}(J)$ smooth function $\theta(s)$ from this family then the requirement 4 is satisfied.  
\begin{Rem}
It is clear that if $\theta(s)$ is $C^{1}(J)$ smooth argument function, then for any  $k \in \mathbb{Z}$, $\theta(s)+2\pi k$ is also $C^{1}(J)$ smooth argument function. Any two $C^{1}(J)$ smooth argument functions differ by constant $2\pi n$ for some $n \in \mathbb{Z}$. 
\end{Rem}
This remark is the consequence of the fact that the quantity $\theta'(s)$ is well defined regardless of the choices of $\theta(s)$. 
Next, we define $\Omega(\Theta(J,g)) = \cup_{\ell \in \Theta(J,g)}\ell^{\circ}$. Given a point $x \in \Omega(\Theta(J,g))$ we denote by $\ell(x)$ a segment  $\ell(x) \in \Theta(J,g)$ which passes through the point $x$. If $x = (s,g(s))$ then instead of $\ell((s,g(s)))$ we just write $\ell(s)$.  Surely such a segment exists, and it is unique.  We denote by $s(x)$  a point $s(x) \in J$ such that $(s(x),g(s(x)))$ is  one of the end points of the segment $\ell(x)$. Moreover, in a natural way we set $s(x)=s$ if $x =(s,g(s))$. It is clear that such $s(x)$ exists, and it is unique. We introduce a function 
\begin{align}\label{kfunction}
K(s) = g'(s) \cos \theta(s) - \sin \theta(s), \qquad s \in J.
\end{align}

 Note that that $K <0$. This inequality becomes obvious if we rewrite  $g'(s) \cos \theta(s) - \sin \theta(s) = \langle (1,g'), (-\sin \theta, \cos \theta)\rangle $  and take into account the requirement 1 of $\Theta(J,g)$. Note that $\langle \cdot, \cdot \rangle$ means scalar product in Euclidean space. 
 \begin{wrapfigure}[13]{r}{0pt}
\includegraphics[scale=0.8]{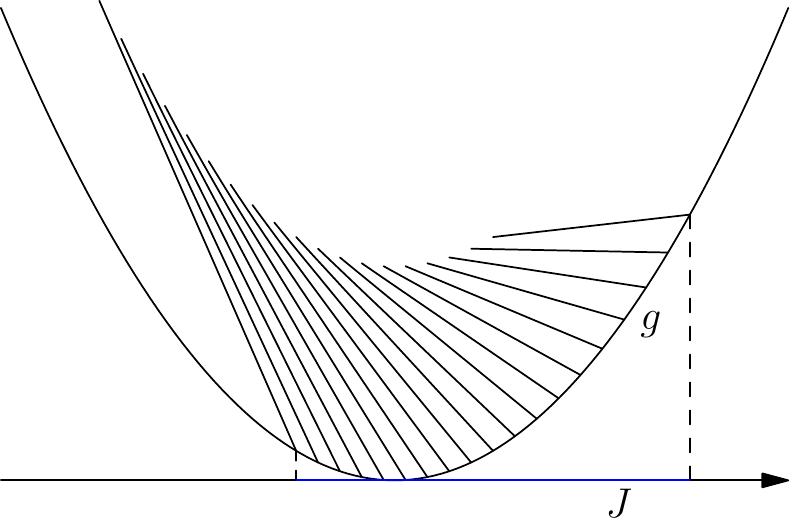}
\caption{Foliation $\Theta(J,g)$}
\label{fig:foli}
\end{wrapfigure}
   We need few more  requirements on $\Theta(J,g)$.
\begin{itemize}
\item[5.]  For any  $x=(x_{1},x_{2}) \in \Omega(\Theta(J,g))$ we have an inequality 
$K(s(x))+\theta'(s(x)) \|(x_{1}-s(x),x_{2}-g(s(x))) \|<0.$
\item[6.]  The function $s(x)$ is continuous  in $\Omega(\Theta(J,g))\cup \Gamma(J)$ where $\Gamma(J)= \{ (s,g(s)) : \; s\in J\}$.
\end{itemize}

Note that if $\theta'(s) \leq 0$ (which happens in most of the cases) then the requirement 5 holds. If we know the endpoints of the segments $\Theta(J,g)$, then in order to verify the requirement 5 it is enough to check at those points $x=(x_{1},x_{2}),$ where $x$ is the another endpoint of the segment other than $(s,g(s))$. Roughly speaking the requirement 5 means the segments of $\Theta(J,g)$ do not rotate rapidly counterclockwise.

\begin{Def}
A set of segments $\Theta(J, g)$ with the requirements mentioned above is called {\em foliation}. The set $\Omega(\Theta(J,g))$ is called {\em domain of foliation}. 
\end{Def}
A typical example of a foliation is given in Figure~\ref{fig:foli}.
\begin{Le}\label{chervyak}
The function $s(x)$ belongs to $C^{1}(\Omega(\Theta(J,g)))$. Moreover
\begin{align}\label{shift}
(s'_{x_{1}},s'_{x_{2}})  = \frac{(\sin \theta, -\cos \theta)}{-K(s)-\theta' \cdot  \|(x_{1}-s,x_{2}-g(s)) \| }.
\end{align} 
\end{Le}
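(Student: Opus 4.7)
The plan is to parametrize the foliation domain explicitly and apply the inverse function theorem. Define
\[
\Phi(s,t) \df \bigl(s+t\cos\theta(s),\; g(s)+t\sin\theta(s)\bigr),
\]
so $\Phi$ sends a pair $(s,t)$ with $s\in J$ and $t\in(0,L(s))$ (where $L(s)$ is the length of $\ell(s)$) to a point on the open segment $\ell(s)^{\circ}\subset \Omega(\Theta(J,g))$. Requirements 2 and 3 on $\Theta(J,g)$ guarantee that $\Phi$ is a bijection from its parameter domain onto $\Omega(\Theta(J,g))$, and the inverse of the first coordinate map is exactly $s(x)$. Note $t = \|(x_1-s,x_2-g(s))\|$ when $x=\Phi(s,t)$.

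Next I would compute the Jacobian of $\Phi$:
\[
\mathrm{d}\Phi(s,t) = \begin{pmatrix} 1-t\sin\theta\cdot\theta'(s) & \cos\theta \\ g'(s)+t\cos\theta\cdot\theta'(s) & \sin\theta \end{pmatrix},
\]
whose determinant simplifies to
\[
\det \mathrm{d}\Phi(s,t) = \sin\theta - g'(s)\cos\theta - t\,\theta'(s) = -K(s) - t\,\theta'(s).
\]
Requirement 5 asserts precisely that this quantity is positive on $\Omega(\Theta(J,g))$, so $\mathrm{d}\Phi$ is invertible there. The inverse function theorem then produces a $C^1$ local inverse of $\Phi$ in a neighborhood of every interior point, and its first component is $s(x)$. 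Combined with requirement 6 (the global continuity of $s(x)$) this upgrades to $s\in C^1(\Omega(\Theta(J,g)))$.

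Finally, to extract the formula, I would read off $(s'_{x_1},s'_{x_2})$ as the first row of $(\mathrm{d}\Phi)^{-1}$:
\[
(s'_{x_1},s'_{x_2}) = \frac{(\sin\theta,\,-\cos\theta)}{\det \mathrm{d}\Phi(s,t)} = \frac{(\sin\theta,\,-\cos\theta)}{-K(s)-\theta'(s)\cdot\|(x_1-s,x_2-g(s))\|},
\]
which is the asserted identity.

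The main obstacle I anticipate is not the computation itself but the bookkeeping needed to patch local smoothness into global $C^{1}$-regularity on $\Omega(\Theta(J,g))$; here one has to use that distinct segments of the foliation are disjoint (requirement 2) so that the local inverses provided by the inverse function theorem all agree with the single global function $s(x)$, and that requirement 6 handles the behavior near the curve $\Gamma(J)$ so that no ambiguity arises in how $s(x)$ is defined as $x$ approaches the boundary curve $(s,g(s))$.
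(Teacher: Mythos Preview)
Your argument is correct. The paper's own proof is a one-liner: it observes that the definition of $s(x)$ is encoded by the implicit equation
\[
-(x_{1}-s)\sin\theta(s)+(x_{2}-g(s))\cos\theta(s)=0,
\]
and then invokes the implicit function theorem; differentiating this relation in $s$ yields $\partial_{s}$ of the left-hand side equal to $-K(s)-\theta'(s)\|(x_{1}-s,x_{2}-g(s))\|$, which is nonzero by requirement~5, and the formula for $(s'_{x_{1}},s'_{x_{2}})$ drops out immediately.

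Your route via the inverse function theorem applied to the parametrization $\Phi(s,t)$ is an equivalent repackaging of the same computation: the Jacobian determinant you compute coincides with the $s$-derivative the paper would compute, and the first row of $(\mathrm{d}\Phi)^{-1}$ reproduces the implicit-function formula. The only real difference is that the paper's version sidesteps the bookkeeping about bijectivity of $\Phi$ and the local-to-global patching you flag at the end, since the implicit function theorem already delivers a $C^{1}$ function $s(x)$ near each point, and uniqueness of the solution $s$ (requirements~2 and~3) forces these local pieces to agree with the globally defined $s(x)$. So your concern about patching is legitimate but is handled automatically in either framework; the paper simply suppresses this discussion.
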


\begin{proof}
Definition of the function $s(x)$ implies that 
\begin{align*}
-(x_{1}-s)\sin \theta(s)+(x_{2}-g(s))\cos \theta(s)=0.
\end{align*}
Therefore the lemma is an immediate consequence of the implicit function theorem. 
\end{proof}

Let $J=[s_{1},s_{2}] \subseteq I,$ and  let $(s,g(s),f(s))\in C^{3}(I)$ be  such that $g'' > 0$ on $I$. Consider an arbitrary foliation $\Theta(J,g)$ with an arbitrary $C^{1}([s_{1},s_{2}])$ smooth argument function $\theta(s)$. We need the following technical lemma.
\begin{Le}\label{texnika}
The solutions of the system of equations 
\begin{align} 
& t'_{1}(s)\cos \theta(s)+t'_{2}(s)\sin \theta(s)=0, \label{extremal}\\
& t_{1}(s)+t_{2}(s)g'(s) =f'(s),\quad s \in J \label{bdcondition}
\end{align}
 are the following functions
\begin{align*}
&t_{1}(s) =  \int_{s_{1}}^{s}\left( \frac{g''(r)}{K(r)}\sin \theta(r) \cdot t_{2}(r) -\frac{f''(r)}{K(r)}\sin \theta(r)\right) dr + f'(s_{1})-t_{2}(s_{1})g'(s_{1}),\\
&t_{2}(s) = t_{2}(s_{1}) \exp\left(-\int_{s_{1}}^{s}\frac{g''(r)}{K(r)}\cos \theta(r) dr\right)+ \int_{s_{1}}^{s} \frac{f''(y)}{K(y)}\exp\left(-\int_{y}^{s}\frac{g''(r)}{K(r)}\cos \theta(r) dr\right)  \cos \theta(y) dy, \quad s \in J
\end{align*}
where $t_{2}(s_{1})$ is an arbitrary real number. 
\end{Le}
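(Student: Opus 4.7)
The plan is to decouple the two equations by using (\ref{bdcondition}) to express $t_{1}'$ in terms of $t_{2}$ and $t_{2}'$, substituting into (\ref{extremal}) to obtain a linear first-order ODE for $t_{2}$ alone, and finally recovering $t_{1}$ in the asserted integral form by integrating an identity derived from (\ref{extremal}).

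First I would differentiate (\ref{bdcondition}) to obtain $t_{1}'(s) = f''(s) - t_{2}'(s)\, g'(s) - t_{2}(s)\, g''(s)$, plug this into (\ref{extremal}), and collect the $t_{2}'(s)$ terms. The coefficient of $t_{2}'$ becomes $\sin\theta(s) - g'(s)\cos\theta(s)$, which by the definition (\ref{kfunction}) of $K$ equals $-K(s)$; since $K(s)<0$ on $J$ (the sign observation immediately following (\ref{kfunction})), this coefficient is strictly nonzero and I can divide by it to arrive at the scalar linear ODE
\[
t_{2}'(s) + \frac{g''(s)\cos\theta(s)}{K(s)}\, t_{2}(s) \;=\; \frac{f''(s)\cos\theta(s)}{K(s)}.
\]
I would then solve this equation by the standard integrating factor $\mu(s) = \exp\bigl(\int_{s_{1}}^{s}\tfrac{g''(r)\cos\theta(r)}{K(r)}\,dr\bigr)$: the equation becomes $(\mu\, t_{2})' = \mu \cdot \tfrac{f''\cos\theta}{K}$, and integrating from $s_{1}$ to $s$ and dividing by $\mu(s)$ yields exactly the formula for $t_{2}$ stated in the lemma, with $t_{2}(s_{1})$ appearing as the free initial constant.

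For $t_{1}$, rather than using the purely algebraic identity $t_{1} = f' - t_{2} g'$, I would integrate the derivative identity coming from (\ref{extremal}). Multiplying the expression for $t_{2}'$ just derived by $-\sin\theta(s)$ and using $\cos\theta \cdot t_{1}' = -\sin\theta \cdot t_{2}'$ from (\ref{extremal}), a direct calculation (using once again $\sin\theta - g'\cos\theta = -K$ to simplify) produces the pointwise identity
\[
t_{1}'(r) \;=\; \frac{g''(r)\sin\theta(r)}{K(r)}\, t_{2}(r) \;-\; \frac{f''(r)\sin\theta(r)}{K(r)}.
\]
Integrating this from $s_{1}$ to $s$ and using the initial value $t_{1}(s_{1}) = f'(s_{1}) - t_{2}(s_{1})\, g'(s_{1})$ read off from (\ref{bdcondition}) at $s = s_{1}$ gives the asserted formula for $t_{1}$.

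I do not expect any real difficulty, since the whole argument is routine first-order linear ODE theory. The only point worth flagging is the algebraic cancellation $\sin\theta - g'\cos\theta = -K$, which converts an a priori degenerate-looking coefficient into the strictly nonzero quantity $K(s)$ and is what lets the system decouple in closed form. Keeping the relation between $t_{1}'$ and $t_{2}'$ in the form $\cos\theta \cdot t_{1}' = -\sin\theta \cdot t_{2}'$ (instead of dividing by $\cos\theta$) means the derivation remains valid at points where $\cos\theta$ vanishes.
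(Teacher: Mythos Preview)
Your proposal is correct and essentially identical to the paper's proof: the paper differentiates (\ref{bdcondition}), writes the two relations as a $2\times 2$ matrix system, inverts to obtain exactly your decoupled ODEs $t_{2}' = -\tfrac{g''\cos\theta}{K}t_{2}+\tfrac{f''\cos\theta}{K}$ and $t_{1}' = \tfrac{g''\sin\theta}{K}t_{2}-\tfrac{f''\sin\theta}{K}$, and then integrates. One small wording point: keeping the relation as $\cos\theta\cdot t_{1}'=-\sin\theta\cdot t_{2}'$ does not by itself determine $t_{1}'$ where $\cos\theta=0$; the clean way to get $t_{1}'$ at all points is to substitute your formula for $t_{2}'$ back into the differentiated version of (\ref{bdcondition}) and use $K-g'\cos\theta=-\sin\theta$.
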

\begin{proof}
We differentiate (\ref{bdcondition}) and combine it with (\ref{extremal}) to obtain the system 
\begin{align*}
  \begin{pmatrix}
  \cos \theta & \sin \theta \\
  1 & g'
 \end{pmatrix}
 \begin{pmatrix}
  t_{1}'\\
  t_{2}' 
 \end{pmatrix}
 =
   \begin{pmatrix}
  0 & 0 \\
  0 & -g''
 \end{pmatrix}
  \begin{pmatrix}
  t_{1}\\
  t_{2} 
 \end{pmatrix}
 +
   \begin{pmatrix}
  0\\
  f''
 \end{pmatrix}.
\end{align*}
This implies that 
\begin{align}\label{gdiffur}
 \begin{pmatrix}
  t_{1}'\\
  t_{2}' 
 \end{pmatrix}
 =
 \frac{g''}{K}
   \begin{pmatrix}
  0 &  \sin \theta \\
  0 & - \cos \theta
 \end{pmatrix}
  \begin{pmatrix}
  t_{1}\\
  t_{2} 
 \end{pmatrix}
 +
  \frac{f''}{K}
   \begin{pmatrix}
  -\sin \theta\\
  \cos \theta
 \end{pmatrix}.
\end{align}
By solving this system of differential equations and using the fact that $t_{1}(s_{1})+g'(s_{1})t_{2}(s_{1})=f'(s_{1})$ we get the desired result. 
\end{proof}
\begin{Rem}\label{zoloto}
Integration by parts allows us to rewrite the expression for $t_{2}(s)$ as follows 
\begin{align*}
&t_{2}(s) = \exp\left(-\int_{s_{1}}^{s}\frac{g''(r)}{K(r)}\cos \theta(r) dr\right) \left(t_{2}(s_{1})-\frac{f''(s_{1})}{g''(s_{1})} \right)+\frac{f''(s)}{g''(s)}-\\
&-\int_{s_{1}}^{s} \left[\frac{f''(y)}{g''(y)}\right]'   \exp\left(-\int_{y}^{s}\frac{g''(r)}{K(r)}\cos \theta(r) dr\right) dy.
\end{align*}
\end{Rem}

\begin{Def}
We say that a function $\Bell$ has a foliation $\Theta(J, g)$  if it is continuous on $\Omega(\Theta(J,g))$, and it is linear on each segment of $\Theta(J,g)$. 
\end{Def}

The following lemma describes how to construct a function $\Bell$ with a given foliation $\Theta(J, g)$, boundary condition $\Bell(s,g(s))=f(s)$, such that $\Bell$ satisfies the homogeneous Monge--Amp\`ere equation.

Consider a function $\Bell$ defined as follows 
\begin{align}
\Bell(x) = f(s) + \langle t(s), x-(s,g(s))\rangle, \quad x = (x_{1},x_{2}) \in \Omega(\Theta(J,g))  \label{bellmanfunction}
\end{align}
where $s=s(x)$, and $t(s) = (t_{1}(s),t_{2}(s))$ satisfies the system of the equations (\ref{extremal}), (\ref{bdcondition})  with an arbitrary $t_{2}(s_{1})$.
\begin{Le}\label{dev}
The function $\Bell$  defined by (\ref{bellmanfunction}) satisfies the following properties:
\begin{itemize}
\item[1.]$\Bell \in C^{2}(\Omega(\Theta(J,g)))\cap C^{1}(\Omega(\Theta(J,g))\cup \Gamma)$, $\Bell$ has the foliation $\Theta(J,g)$  and 
\begin{align}\label{bcondition0}
\Bell(s,g(s))=f(s) \quad \text{for all} \quad  s \in [s_{1},s_{2}].
\end{align}
\item[2.] $\nabla \Bell(x) = t(s)$, where $s=s(x)$, moreover  $\Bell$ satisfies the homogeneous Monge--Amp\`ere equation.
\end{itemize}
\end{Le}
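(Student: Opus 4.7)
The proof has three moving parts: checking the boundary condition and foliation structure directly, computing $\nabla \Bell$ (which is the key algebraic step), and then reading off both the $\det \mathrm{d}^{2}\Bell = 0$ identity and the regularity claims.

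First, I would verify the easy claims. Plugging $x=(s,g(s))$ into (\ref{bellmanfunction}) gives $s(x)=s$ and makes the inner product vanish, so $\Bell(s,g(s))=f(s)$. Next, for any fixed segment $\ell\in\Theta(J,g)$ with endpoint $(s,g(s))$, every $x\in\ell^{\circ}$ has the same $s(x)=s$ by definition; hence on $\ell^{\circ}$ the function $\Bell(x)=f(s)+\langle t(s),x-(s,g(s))\rangle$ is the restriction of a single affine function of $x$. This establishes that $\Bell$ has foliation $\Theta(J,g)$ and satisfies the prescribed boundary data.

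The heart of the matter is the computation of $\nabla \Bell$. Differentiating (\ref{bellmanfunction}) via the chain rule (using that $s=s(x_{1},x_{2})$ is $C^{1}$ by Lemma~\ref{chervyak}), I get
\begin{align*}
\partial_{x_{i}}\Bell = t_{i}(s) + \bigl[f'(s)-t_{1}(s)-t_{2}(s)g'(s)\bigr]\,\partial_{x_{i}}s + \bigl[t_{1}'(s)(x_{1}-s)+t_{2}'(s)(x_{2}-g(s))\bigr]\,\partial_{x_{i}}s.
\end{align*}
The first bracket vanishes by the boundary condition (\ref{bdcondition}). For the second bracket, since $(x_{1}-s,x_{2}-g(s))$ is parallel to the direction vector $(\cos\theta(s),\sin\theta(s))$ of $\ell(x)$, the bracket is a scalar multiple of $t_{1}'(s)\cos\theta(s)+t_{2}'(s)\sin\theta(s)$, which is $0$ by the extremal equation (\ref{extremal}). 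Hence $\nabla \Bell(x)=t(s(x))$, proving part~2 of the lemma.

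From $\nabla \Bell(x)=t(s(x))$ the remaining claims follow at once. The Hessian equals the rank-one matrix $\partial^{2}_{ij}\Bell = t_{i}'(s)\,\partial_{x_{j}}s$, which is the outer product of $t'(s)$ and $\nabla s(x)$; its determinant is $0$, so $\Bell$ satisfies the homogeneous Monge--Amp\`ere equation. Smoothness in $\Omega(\Theta(J,g))$ is inherited from $s(\cdot)\in C^{1}$ and $t(\cdot)\in C^{1}$ (with one more derivative available from the integral formulas of Lemma~\ref{texnika} because $f,g\in C^{3}$), giving $\Bell\in C^{2}$. Finally, to extend $\nabla \Bell$ continuously to $\Gamma$, I use requirement~6 (continuity of $s(x)$ up to $\Gamma$) together with continuity of $t$, so that $\nabla \Bell(x)=t(s(x))\to t(s_{0})$ as $x\to(s_{0},g(s_{0}))$. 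The main obstacle is precisely the gradient computation: the simultaneous use of (\ref{bdcondition}) and (\ref{extremal}) is exactly what makes $\Bell$ behave as if it were a plain linear function in each leaf direction, and everything else is a corollary of $\nabla \Bell = t\circ s$.
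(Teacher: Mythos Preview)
Your proof is correct and follows essentially the same route as the paper's: both differentiate (\ref{bellmanfunction}) via the chain rule, kill the two extra terms using (\ref{bdcondition}) and (\ref{extremal}) respectively to obtain $\nabla\Bell=t(s(x))$, and then read off the rank-one Hessian $\partial^{2}_{ij}\Bell=t_{i}'(s)\,\partial_{x_{j}}s$ with zero determinant. The only cosmetic difference is that the paper verifies the symmetry $t_{1}'s'_{x_{2}}=t_{2}'s'_{x_{1}}$ explicitly (again via (\ref{extremal}) and (\ref{shift})) before declaring $\Bell\in C^{2}$, whereas you infer $C^{2}$ directly from $\nabla\Bell=t\circ s$ with $t,s\in C^{1}$; and you make the appeal to requirement~6 for the $C^{1}$ extension to $\Gamma$ more explicit than the paper does.
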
  
\begin{proof}
The fact that $\Bell$ has the foliation $\Theta(J,g)$, and it satisfies the equality~(\ref{bcondition0}) immediately follows from the definition of the function 
$\Bell$. We check the condition of smoothness.  By Lemma~\ref{chervyak} and Lemma~\ref{texnika}  we have $s(x) \in C^{2}(\Omega(\Theta(J,g)))$  and $t_{1},t_{2} \in C^{1}(J)$, therefore the right-hand side of (\ref{bellmanfunction}) is differentiable with respect to $x$. So after differentiation of (\ref{bellmanfunction}) we get 
\begin{align}
\nabla \Bell(x) =  \left[ f'(s) - \langle t(s), (1,g'(s))\rangle \right] (s'_{x_{1}},s'_{x_{2}}) + t(s) + \langle t'(s),x-(s,g(s))\rangle (s'_{x_{1}},s'_{x_{2}}). \label{bescar}
\end{align}
Using (\ref{extremal}) and (\ref{bdcondition}) we obtain $\nabla \Bell (x) = t(s)$. Taking derivative with respect to $x$ the second time we get 
\begin{align*}
\frac{\partial^{2} \Bell}{\partial x_{1}^{2}}= t'_{1}(s) s'_{x_{1}}, \quad \frac{\partial^{2} \Bell}{\partial x_{2} \partial x_{1}}= t'_{1}(s) s'_{x_{2}}, \quad \frac{\partial^{2} \Bell}{\partial x_{1} \partial x_{2} }= t'_{2}(s) s'_{x_{1}}, \quad  \frac{\partial^{2} \Bell}{\partial x_{2}^{2}}= t'_{2}(s) s'_{x_{2}}.
\end{align*}
Using (\ref{extremal}) we get that $t'_{1}(s)s'_{x_{2}} = t'_{2}(s) s'_{x_{1}}$, therefore $\Bell \in C^{2}(\Omega(\Theta(J,g)))$. Finally, we check that 
$\Bell$ satisfies the homogeneous Monge--Amp\`ere equation. Indeed, 
\begin{align*}
&\det(\text{d}^{2}\Bell)= \frac{\partial^{2} \Bell}{\partial x_{1}^{2}} \cdot \frac{\partial^{2} \Bell}{\partial x_{2}^{2}} - \frac{\partial^{2} \Bell}{\partial x_{2} \partial x_{1}} \cdot \frac{\partial^{2} \Bell}{\partial x_{1} \partial x_{2} } =t'_{1}(s) s'_{x_{1}} \cdot t'_{2}(s) s'_{x_{1}} - t'_{1}(s) s'_{x_{2}} \cdot t'_{2}(s) s'_{x_{1}} = 0.
\end{align*}
  \end{proof}
  \begin{Def}
  The function $t(s)=(t_{1}(s),t_{2}(s))=\nabla \Bell(x)$, $s=s(x)$, is called {\em gradient function} corresponding to $\Bell$. 
  \end{Def}
  The following lemma investigates the concavity of the function $\Bell$ defined by (\ref{bellmanfunction}). Let $\|\tilde \ell (x)\| = \|(s(x)-x_{1}, g(s(x))-x_{2})\|$, where $x = (x_{1},x_{2}) \in \Omega(\Theta(J,g))$.
\begin{Le}\label{traceeq}
The following equalities hold 
\begin{align*}
&\frac{\partial^{2} \Bell}{\partial x_{1}^{2}}+\frac{\partial^{2} \Bell}{\partial x_{2}^{2}}=\frac{g''}{K(K+\theta' \| \tilde\ell (x)\|)}\left(-t_{2} +\frac{f''}{g''}\right)= \\
&\frac{g''}{K(K+\theta' \| \tilde \ell (x)\|)} \times \left[ -\exp\left(-\int_{s_{1}}^{s}\frac{g''(r)}{K(r)}\cos \theta(r) dr\right) \left(t_{2}(s_{1})-\frac{f''(s_{1})}{g''(s_{1})} \right)\right.\\
&\left.  +\int_{s_{1}}^{s} \left[\frac{f''(y)}{g''(y)}\right]'   \exp\left(-\int_{y}^{s}\frac{g''(r)}{K(r)}\cos \theta(r) dr\right) dy\right].
\end{align*}
\end{Le}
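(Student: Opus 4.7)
The plan is to combine three ingredients already established in the excerpt: the second-derivative formulas from Lemma~\ref{dev}, the gradient of $s(x)$ from Lemma~\ref{chervyak}, and the differential system (\ref{gdiffur}) that governs $(t_1,t_2)$ in the proof of Lemma~\ref{texnika}. The first equality is a direct algebraic reduction; the second is then obtained by plugging in the integrated expression for $t_2$ given in Remark~\ref{zoloto}.

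First I would write, using Lemma~\ref{dev},
\begin{align*}
\frac{\partial^{2} \Bell}{\partial x_{1}^{2}}+\frac{\partial^{2} \Bell}{\partial x_{2}^{2}} = t'_{1}(s)\, s'_{x_{1}} + t'_{2}(s)\, s'_{x_{2}},
\end{align*}
and substitute the formula $(s'_{x_1},s'_{x_2}) = \frac{(\sin\theta,-\cos\theta)}{-K-\theta'\|\tilde\ell(x)\|}$ from Lemma~\ref{chervyak}. This turns the trace into $\frac{t'_1 \sin\theta - t'_2 \cos\theta}{-K-\theta'\|\tilde\ell(x)\|}$, so the task reduces to computing the numerator $t'_1\sin\theta - t'_2\cos\theta$.

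For this, I would read off from system (\ref{gdiffur})
\begin{align*}
t'_{1}(s) = \frac{g''}{K}\sin\theta\cdot t_{2} - \frac{f''}{K}\sin\theta, \qquad t'_{2}(s) = -\frac{g''}{K}\cos\theta\cdot t_{2} + \frac{f''}{K}\cos\theta.
\end{align*}
Multiplying the first equation by $\sin\theta$, the second by $-\cos\theta$, and adding, the trigonometric identity $\sin^2\theta+\cos^2\theta=1$ collapses the combination to $\frac{g''}{K}\bigl(t_2-\tfrac{f''}{g''}\bigr)$. Dividing by $-K-\theta'\|\tilde\ell(x)\|$ and moving the minus sign inside the bracket gives precisely the first displayed equality of the lemma,
\begin{align*}
\frac{\partial^{2} \Bell}{\partial x_{1}^{2}}+\frac{\partial^{2} \Bell}{\partial x_{2}^{2}} = \frac{g''}{K(K+\theta'\|\tilde\ell(x)\|)}\Bigl(-t_{2}+\frac{f''}{g''}\Bigr).
\end{align*}

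For the second equality I would simply substitute the representation of $t_2(s)$ from Remark~\ref{zoloto} into $-t_2+\tfrac{f''}{g''}$; the free term $\tfrac{f''(s)}{g''(s)}$ cancels, leaving the bracketed expression exactly as stated. No obstacle arises here beyond careful sign tracking, and I would expect the only place worth double-checking is precisely the sign flip produced by the factor $-K-\theta'\|\tilde\ell(x)\|$ in the denominator of $(s'_{x_1},s'_{x_2})$; everything else is mechanical.
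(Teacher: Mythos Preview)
Your proposal is correct and follows exactly the route the paper takes: the paper's proof states the identity $\partial_{x_1}^2\Bell+\partial_{x_2}^2\Bell=t'_1 s'_{x_1}+t'_2 s'_{x_2}$ and then says the rest is a direct computation using (\ref{shift}), (\ref{extremal}), (\ref{bdcondition}) and Remark~\ref{zoloto}. You have simply filled in that computation explicitly (using the equivalent system (\ref{gdiffur}) in place of (\ref{extremal})--(\ref{bdcondition})), and your sign tracking is correct.
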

\begin{proof} Note that 
\begin{align*}
\frac{\partial^{2} \Bell}{\partial x_{1}^{2}}+\frac{\partial^{2} \Bell}{\partial x_{2}^{2}} =t'_{1}(s)s'_{1} + t'_{2}(s)s'_{2}.
\end{align*}
Therefore the lemma is a direct computation and  application of Equalities (\ref{shift}), (\ref{extremal}), (\ref{bdcondition}) and Remark~\ref{zoloto}.
\end{proof}
Finally, we get the following important statement. 
\begin{Cor}\label{sledstviesily}
The function $\Bell$ is concave in $\Omega(\Theta(J,g))$ if and only if $\mathcal{F}(s) \leq 0$, where  
\begin{align}
&\mathcal{F}(s) = -\exp\left(-\int_{s_{1}}^{s}\frac{g''(r)}{K(r)}\cos \theta(r) dr\right) \left(t_{2}(s_{1})-\frac{f''(s_{1})}{g''(s_{1})} \right) \label{silfun}\\
& +\int_{s_{1}}^{s} \left[\frac{f''(y)}{g''(y)}\right]'   \exp\left(-\int_{y}^{s}\frac{g''(r)}{K(r)}\cos \theta(r) dr\right) dy = \frac{f''(s)}{g''(s)}-t_{2}(s).\nonumber
\end{align}
\end{Cor}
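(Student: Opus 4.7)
The plan is to read the corollary as a direct synthesis of Lemma~\ref{dev} and Lemma~\ref{traceeq}, combined with the elementary fact that a symmetric $2\times 2$ matrix whose determinant vanishes is negative semidefinite if and only if its trace is non-positive. No new analytic input is needed beyond what has already been assembled.

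First I would note that by Lemma~\ref{dev}, $\Bell \in C^{2}(\Omega(\Theta(J,g)))$, so concavity on the (open) domain $\Omega(\Theta(J,g))$ is equivalent to the pointwise condition $\mathrm{d}^2 \Bell \leq 0$. The same lemma shows that $\Bell$ solves the homogeneous Monge--Amp\`ere equation, so $\det(\mathrm{d}^2 \Bell) \equiv 0$. Consequently the two eigenvalues of $\mathrm{d}^2 \Bell$ are $0$ and $\mathrm{tr}(\mathrm{d}^2 \Bell)$, which reduces concavity to the single scalar inequality $\Bell''_{x_1 x_1} + \Bell''_{x_2 x_2} \leq 0$ at every point of $\Omega(\Theta(J,g))$.

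Next I would invoke Lemma~\ref{traceeq}, which identifies
\[
\Bell''_{x_1 x_1} + \Bell''_{x_2 x_2} = \frac{g''(s)}{K(s)\bigl(K(s) + \theta'(s)\,\|\tilde \ell(x)\|\bigr)} \cdot \mathcal{F}(s),
\]
where $\mathcal{F}(s) = f''(s)/g''(s) - t_2(s)$ is exactly the function appearing in (\ref{silfun}) (the equivalence of the two expressions for $\mathcal{F}(s)$ being the content of Remark~\ref{zoloto}). The only thing left to verify is that the scalar prefactor is strictly positive: $g''(s) > 0$ by the standing hypothesis on $g$, $K(s) < 0$ by the observation following the definition (\ref{kfunction}), and $K(s) + \theta'(s)\,\|\tilde \ell(x)\| < 0$ by requirement 5 in the definition of a foliation. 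Hence $K(s)\bigl(K(s) + \theta'(s)\|\tilde\ell(x)\|\bigr) > 0$ and the whole prefactor is positive, so the sign of the trace coincides with the sign of $\mathcal{F}(s)$.

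There is no real obstacle here; the only thing to be careful about is that $\mathcal{F}(s)$ depends solely on $s$, while concavity is asserted pointwise in $x \in \Omega(\Theta(J,g))$. This mismatch is harmless because as $x$ varies over the segment $\ell(s) \cap \Omega(\Theta(J,g))$ the value of $s(x)$ is constant, so the condition $\mathcal{F}(s(x)) \leq 0$ for all $x$ is the same as $\mathcal{F}(s) \leq 0$ for all $s \in J$. Combining these observations yields the stated equivalence.
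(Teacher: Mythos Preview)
Your proof is correct and follows essentially the same route as the paper: both reduce concavity to the trace inequality via the Monge--Amp\`ere equation (Lemma~\ref{dev}), then invoke Lemma~\ref{traceeq} and check that the prefactor $g''/[K(K+\theta'\|\tilde\ell(x)\|)]$ is strictly positive using $g''>0$, $K<0$, and requirement~5. The extra remarks you include (on the eigenvalue structure and on the dependence on $s$ versus $x$) are sound elaborations of steps the paper leaves implicit.
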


\begin{proof}
$\Bell$ satisfies the homogeneous Monge--Amp\`ere equation. Therefore $\Bell$ is concave if and only if 
\begin{align}\label{trinequality}
\frac{\partial^{2} \Bell}{\partial x_{1}^{2}}+\frac{\partial^{2} \Bell}{\partial x_{2}^{2}}  \leq 0.
\end{align}
 Note that
\begin{align*}
\frac{g''}{K(K+\theta' \| \tilde \ell (x) \|)} >  0.
\end{align*}
Hence, according to Lemma~\ref{traceeq}, the inequality (\ref{trinequality}) holds if and only if $\mathcal{F}(s) \leq 0$. 
\end{proof}
Furthermore, the function $\mathcal{F}$ will be called {\em force} function. 
\begin{Rem}
The fact  $t_{2}(s) = f''/g'' - \mathcal{F} $  together with (\ref{gdiffur}) imply  that the force function $\mathcal{F}$ satisfies the following differential equation
\begin{align}
&\mathcal{F}' + \mathcal{F} \cdot \frac{\cos \theta}{K}g'' - \left[\frac{f''}{g''}\right]'=0, \quad s \in J \label{maindif}\\
&\mathcal{F}(s_{1}) =\frac{f''(s_{1})}{g''(s_{1})} - t_{2}(s_{1}). \nonumber
\end{align}
\end{Rem}

We remind the reader that for an arbitrary smooth curve $\gamma  = (s,g(s),f(s)),$ the torsion has the following expression 
\begin{align*}
\frac{\det (\gamma',\gamma'',\gamma''')}{\|\gamma'\times \gamma'' \|^{2}} = \frac{f'''g''-g'''f''}{\|\gamma'\times \gamma'' \|^{2}}=\frac{(g'')^{2}}{\|\gamma'\times \gamma'' \|^{2}} \cdot \left[ \frac{f''}{g''}\right]'.
\end{align*}

\begin{Cor}\label{torsion}
If $\mathcal{F}(s_{1})\leq 0$ and the torsion of a curve $(s,g(s),f(s)),$ $s \in J$ is negative, then the function $\Bell$ defined by (\ref{bellmanfunction}) is concave.  
\end{Cor}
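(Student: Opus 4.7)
The plan is to verify the hypothesis of Corollary~\ref{sledstviesily} --- namely that $\mathcal{F}(s)\le 0$ for every $s\in J$ --- directly from the closed-form expression~(\ref{silfun}). Everything the corollary needs has already been set up: $\mathbf{B}$ satisfies the homogeneous Monge--Amp\`ere equation, so concavity is equivalent to the sign of the trace, which in turn is controlled by $\mathcal{F}$.

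First I would translate each hypothesis into a sign statement. The initial condition recorded in~(\ref{silfun}) says $\mathcal{F}(s_1)=f''(s_1)/g''(s_1)-t_2(s_1)$, so $\mathcal{F}(s_1)\le 0$ is exactly $t_2(s_1)-f''(s_1)/g''(s_1)\ge 0$. The torsion formula displayed just before the statement shows $\operatorname{sgn}(\text{torsion of }(s,g(s),f(s)))=\operatorname{sgn}[f''/g'']'$, since $(g'')^2/\|\gamma'\times\gamma''\|^2>0$; hence negative torsion on $J$ is equivalent to $[f''/g'']'<0$ on $J$. Finally, every exponential factor $\exp\bigl(-\int_a^b\frac{g''(r)}{K(r)}\cos\theta(r)\,dr\bigr)$ appearing in~(\ref{silfun}) is strictly positive, no matter what signs $\cos\theta$ takes.

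With this bookkeeping in hand, I would inspect~(\ref{silfun}) term by term for $s\in J=[s_1,s_2]$. The leading term is $-\exp(\cdot)\bigl(t_2(s_1)-f''(s_1)/g''(s_1)\bigr)$, the product of a positive exponential with a non-positive factor, hence $\le 0$. In the integral term, the integrand $[f''/g'']'(y)\exp(\cdot)$ is strictly negative on $(s_1,s_2]$ because $[f''/g'']'<0$ and the exponential is positive; since $s\ge s_1$, integration over $[s_1,s]$ preserves the non-positive sign. Adding the two contributions gives $\mathcal{F}(s)\le 0$ throughout $J$, and Corollary~\ref{sledstviesily} then delivers the concavity of $\mathbf{B}$. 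There is no real obstacle in this argument --- the substantive work was already absorbed into deriving the formula~(\ref{silfun}), and this corollary is the clean observation that once the two natural sources of negativity, the initial datum and the torsion, are aligned, the whole integral representation is manifestly non-positive.
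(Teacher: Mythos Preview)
Your proof is correct and follows exactly the approach the paper intends: the paper's own proof is the single sentence ``The corollary is an immediate consequence of (\ref{silfun}),'' and you have faithfully unpacked what that immediacy means --- the initial term in (\ref{silfun}) is nonpositive because $\mathcal{F}(s_1)\le 0$, the integral term is nonpositive because negative torsion gives $[f''/g'']'<0$ while the exponential weight is positive, and then Corollary~\ref{sledstviesily} finishes.
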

\begin{proof}
The corollary  is an immediate consequence of (\ref{silfun}).
\end{proof}
Thus, we see that the torsion of the boundary data plays a crucial role in the concavity of a surface with zero Gaussian curvature.   More detailed investigations about how we choose the constant $t_{2}(s_{1})$  will be given in  Subsection~\ref{cupsection}. 

Let $\Theta(J,g)$ and $\tilde \Theta(J,g)$ be foliations with some argument functions $\theta(s)$ and $\tilde \theta (s)$ respectively. Let $\Bell$ and $\tilde \Bell$ be the corresponding functions defined by (\ref{bellmanfunction}), and let $\mathcal{F}, \tilde{\mathcal{F}}$ be the corresponding force functions. Note that  
 $\mathcal{F}(s) = \tilde{\mathcal{F}}(s)$  is equivalent to the equality $t(s)=\tilde t (s)$ where  $t(s) = (t_{1}(s),t_{2}(s))$ and $\tilde t (s)=(\tilde t_{1}(s),t_{2}(s))$ are the corresponding gradients of $\Bell$ and $\tilde \Bell$ (see (\ref{bdcondition}) and Corollary~\ref{sledstviesily}).
 
 \begin{wrapfigure}[13]{r}{0pt}
\includegraphics[scale=0.80]{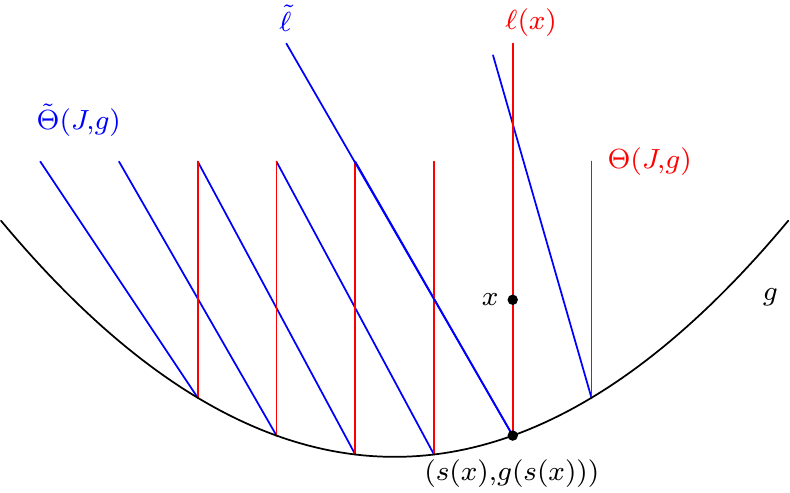}
\caption{Foliations $\Theta(J,g)$ and $\tilde{\Theta}(J,g)$}
\label{fig:rost}
\end{wrapfigure}
  Assume that the functions $\Bell$ and $\tilde \Bell$ are concave functions.
 \begin{Le}\label{comparison}
 If $\sin(\tilde \theta - \theta)\geq 0 $ for all  $ s \in J$,   and $\mathcal{F}(s_{1})=\tilde{\mathcal{F}}(s_{1})$, then $\tilde \Bell \leq \Bell $ on $\Omega(\Theta(J,g))\cap \tilde{\Omega}(\Theta(J,g)).$
 \end{Le}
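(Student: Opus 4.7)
The plan is to fix an arbitrary $x \in \Omega(\Theta(J,g)) \cap \tilde{\Omega}(\Theta(J,g))$, let $s = s(x) \in J$ be the parameter of the segment of $\Theta(J,g)$ through $x$, and exploit the asymmetry that $\Bell$ is linear along $\ell(s)$ while $\tilde{\Bell}$ is merely concave. Writing $x - (s, g(s)) = r(\cos\theta(s), \sin\theta(s))$ with $r > 0$ (a choice of orientation of $\theta$), Lemma~\ref{dev} gives the exact formula $\Bell(x) = f(s) + \langle t(s), x-(s,g(s))\rangle$, while the concavity of $\tilde{\Bell}$ combined with $\nabla\tilde{\Bell}(s,g(s)) = \tilde t(s)$ and $\tilde{\Bell}(s,g(s)) = f(s)$ yields the first-order upper bound $\tilde{\Bell}(x) \leq f(s) + \langle \tilde t(s), x-(s,g(s))\rangle$. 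Both $t(s)$ and $\tilde t(s)$ satisfy (\ref{bdcondition}), so $t - \tilde t$ is orthogonal to $(1, g'(s))$; together with $\mathcal{F} = f''/g'' - t_2$ this forces $t(s) - \tilde t(s) = (\tilde{\mathcal{F}}(s) - \mathcal{F}(s))(-g'(s), 1)$. Computing $\langle (-g'(s), 1), (\cos\theta(s), \sin\theta(s))\rangle = -K(s)$ then gives the lower bound
\[
\Bell(x) - \tilde{\Bell}(x) \;\geq\; r \cdot \bigl(\tilde{\mathcal{F}}(s) - \mathcal{F}(s)\bigr) \cdot \bigl(-K(s)\bigr),
\]
so since $r > 0$ and $-K(s) > 0$, the entire lemma reduces to the pointwise inequality $\tilde{\mathcal{F}}(s) \geq \mathcal{F}(s)$ on $J$.

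To compare the force functions I would set $\Delta := \tilde{\mathcal{F}} - \mathcal{F}$, note that $\Delta(s_1) = 0$ by hypothesis, and subtract the two instances of the ODE (\ref{maindif}). The key algebraic step is the identity
\[
\frac{\cos\tilde\theta}{\tilde K} - \frac{\cos\theta}{K} \;=\; \frac{K\cos\tilde\theta - \tilde K \cos\theta}{K\tilde K} \;=\; \frac{\sin(\tilde\theta - \theta)}{K\tilde K},
\]
obtained by expanding $K = g'\cos\theta - \sin\theta$ and $\tilde K = g'\cos\tilde\theta - \sin\tilde\theta$ and letting the $g'$-terms cancel. Using this identity, $\Delta$ satisfies the linear first-order ODE
\[
\Delta'(s) + \frac{\cos\tilde\theta(s)}{\tilde K(s)}\,g''(s)\,\Delta(s) \;=\; -\mathcal{F}(s)\,g''(s)\,\frac{\sin(\tilde\theta(s) - \theta(s))}{K(s)\,\tilde K(s)}, \qquad \Delta(s_1) = 0.
\]

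The concavity of both $\Bell$ and $\tilde{\Bell}$ forces $\mathcal{F} \leq 0$ on $J$ by Corollary~\ref{sledstviesily}; together with the hypothesis $\sin(\tilde\theta - \theta) \geq 0$, the positivity $g'' > 0$, and $K \tilde K > 0$ (since $K, \tilde K < 0$), the right-hand side of this ODE is nonnegative. Solving by the positive integrating factor $\mu(s) := \exp\!\int_{s_1}^{s}(\cos\tilde\theta / \tilde K)\,g''\,dr$ and using $\Delta(s_1) = 0$ produces the representation
\[
\Delta(s) \;=\; \frac{1}{\mu(s)} \int_{s_1}^{s} \mu(y)\,\bigl(-\mathcal{F}(y)\bigr)\,g''(y)\,\frac{\sin(\tilde\theta(y) - \theta(y))}{K(y)\,\tilde K(y)}\,dy \;\geq\; 0,
\]
which establishes $\tilde{\mathcal{F}} \geq \mathcal{F}$ and hence the lemma. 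The main obstacle is really the bookkeeping: identifying the coefficient difference as $\sin(\tilde\theta - \theta)/(K \tilde K)$ is what makes the hypothesis bite in exactly the right place, and recognizing that the analogous trigonometric collapse $\langle (-g',1),(\cos\theta,\sin\theta)\rangle = -K$ turns the gradient comparison into a force comparison. Once both of these simplifications are in hand, the remainder is a routine linear ODE estimate with a sign-definite source term.
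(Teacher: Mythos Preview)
Your proof is correct and follows essentially the same route as the paper's: both arguments use the concavity of $\tilde{\Bell}$ to bound it above by its tangent plane at $(s(x),g(s(x)))$, compare that plane to the exact linear formula for $\Bell$ along $\ell(x)$, and reduce the comparison to $\tilde{\mathcal{F}}\geq \mathcal{F}$ via the identity $K\cos\tilde\theta-\tilde K\cos\theta=\sin(\tilde\theta-\theta)$. The only difference is cosmetic: the paper states the coefficient inequality $\cos\tilde\theta/\tilde K\geq\cos\theta/K$ and invokes ``comparison theorems'' for the ODE~(\ref{maindif}), whereas you subtract the two equations, write the resulting linear ODE for $\Delta=\tilde{\mathcal{F}}-\mathcal{F}$ explicitly, and solve by integrating factor; both versions ultimately rely on $\mathcal{F}\leq 0$ (from the assumed concavity of $\Bell$) to get the sign.
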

In other words, the lemma says that if at initial point $(s_{1},g(s_{1}))$ gradients of the functions $\tilde \Bell$ and $\Bell$ coincide, and the foliation $\tilde{\Theta}(J,g)$ is ``to the left of''  the foliation $\Theta(J,g)$ (see Figure~\ref{fig:rost}) then  $\tilde{\Bell} \leq \Bell$ provided $\Bell$ and $\tilde \Bell$ are concave.  
\begin{proof}
Let $K$ and $\tilde K$ be the corresponding functions of $\Bell$ and $\tilde \Bell$ defined by (\ref{kfunction}). The condition  $K, \tilde K <0$ implies that the
inequality $\sin(\tilde \theta - \theta)\geq 0$ is equivalent to the inequality 
\begin{align}\label{pustyak}
\frac{\cos \tilde \theta}{\tilde K} \geq \frac{\cos \theta}{ K} \quad \text{for} \quad s\in J.
\end{align}
Indeed, if we rewrite (\ref{pustyak}) as $K \cos \tilde \theta \geq\tilde K \cos \theta$ then this simplifies to $-\sin\theta \cos \tilde \theta \geq -\sin \tilde{\theta} \cos \theta$, so the result follows. 
 \begin{wrapfigure}[13]{r}{0pt}
\includegraphics[scale=0.80]{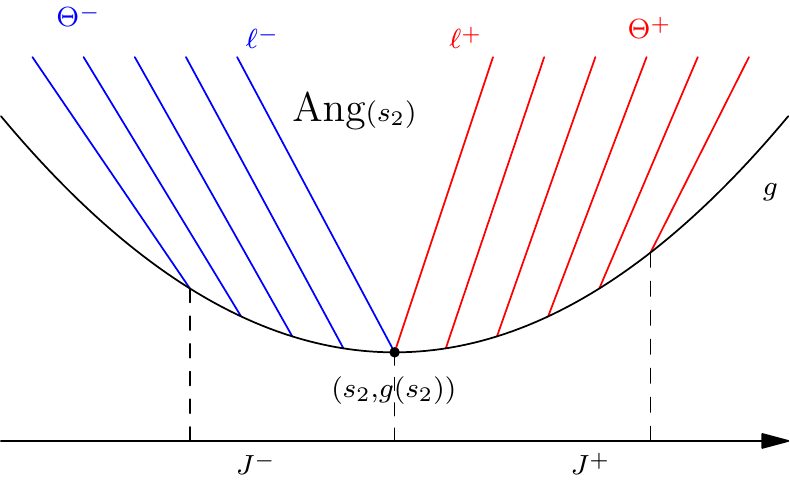}
\caption{Gluing of $\Bell^{-}$ and $\Bell^{+}$}
\label{fig:gluing}
\end{wrapfigure}
The force functions $\mathcal{F}, \tilde{\mathcal{F}}$ satisfy the differential equation (\ref{maindif}) with  the same boundary condition $\mathcal{F}(s_{1}) = \tilde{\mathcal{F}}(s_{1})$. Then by (\ref{pustyak}) and by  comparison theorems we get $\tilde{\mathcal{F}} \geq \mathcal{F}$ on $J$. This and (\ref{silfun}) imply that $\tilde t _{2} \leq t_{2}$ on $J$.  Pick any point $x \in \Omega(\Theta(J,g))\cap \tilde{\Omega}(\Theta(J,g)).$ Then there exists a segment $\ell(x) \in \Theta(J,g)$. Let $(s(x),g(s(x)))$ be the corresponding endpoint of this segment. There exists a segment $\tilde \ell \in \tilde{\Theta}(J,g)$ which has $(s(x),g(s(x)))$ as an endpoint (see Figure~\ref{fig:rost}).

 Consider a tangent plane $L(x)$ to $(x_{1},x_{2}, \tilde \Bell)$  at point $(s(x),g(s(x))).$ The fact that the gradient of  $\tilde \Bell$ is constant on $\tilde{\ell}$,  implies that   $L$ is tangent to $(x_{1},x_{2},\tilde \Bell )$ on $\tilde{\ell}$. Therefore
\begin{align*}
&L(x) = f(s)+ \langle (\tilde t _{1}(s),\tilde t _{2}(s)),(x_{1}-s,x_{2}-g(s)) \rangle,
\end{align*}
where $x=(x_{1},x_{2})$ and $s=s(x)$. 
  Concavity of  $\tilde{\Bell}$ implies that a value of the function $\tilde \Bell$ at point $y$ seen from the point $(s(x),g(s(x)))$ is less than $L(y)$. In particular  $\tilde \Bell(x) \leq L(x)$. Now it is enough to prove that $L(x) \leq \Bell(x)$. By (\ref{bellmanfunction}) we have
\begin{align*}
\Bell(x) = f(s)+ \langle (t _{1}(s), t_{2}(s)),(x_{1}-s(x),x_{2}-g(s)) \rangle.
\end{align*} 
Therefore using  (\ref{bdcondition}), $\langle (-g',1), (x_{1}-s, x_{2}-g(s))\rangle  \geq 0$ and the fact that  $\tilde t _{2} \leq t_{2}$ we get the desired result. 
\end{proof}

Let $J^{-} = [s_{1},s_{2}]$ and $J^{+} = [s_{2},s_{3}]$ where $J^{-},J^{+} \subset I$. Consider arbitrary foliations $\Theta^{-}=\Theta^{-}(J^{-},g)$ and $\Theta^{+}=\Theta^{+}(J^{+},g)$ such that 
$\Omega(\Theta^{-}) \cap \Omega(\Theta^{+}) = \emptyset$, and let $\theta^{-}$ and $\theta^{+}$ be the corresponding argument functions.
Let $\Bell^{-}$ and $\Bell^{+}$ be the corresponding functions defined by (\ref{bellmanfunction}), and let $t^{-}=(t_{1}^{-},t_{2}^{-})$, $t^{+}=(t_{1}^{+},t_{2}^{+})$ be the corresponding gradient functions. 
 Set $\operatorname{Ang}(s_{2})$ to be a convex hull of $\ell^{-}(s_{2})$ and $\ell^{+}(s_{2})$ where $\ell^{-}(s_{2})\in \Theta^{-}$, $\ell^{+}(s_{2})\in \Theta^{+}$ are the segments with the endpoint $(s_{2},g(s_{2}))$ (see Figure~\ref{fig:gluing}). We require that  $\operatorname{Ang}(s_{2})\cap \Omega(\Theta^{-})=\ell^{-}$ and $\operatorname{Ang}(s_{2})\cap \Omega(\Theta^{+})=\ell^{+}$.
 
  Let $\mathcal{F}^{-}, \mathcal{F}^{+}$ be the corresponding forces, and let $\Bell_{\operatorname{Ang}}$ be the function defined linearly on $\operatorname{Ang}(s_{2})$ via the values of $\Bell^{-}$ and $\Bell^{+}$ on $\ell^{-}$, $\ell^{+}$ respectively. 
\begin{Le}\label{gluing}
If $t_{2}^{-}(s_{2})=t_{2}^{+}(s_{2})$, then the function $\Bell$ defined as follows 
\begin{align*}\Bell(x) = 
\begin{cases}
\Bell^{-}(x), &   x \in \Omega(\Theta(J^{-},g)),\\
\Bell_{\operatorname{Ang}}(x), &   x \in \operatorname{Ang}(s_{2}),\\
\Bell^{+}(x), &  x \in \Omega(\Theta(J^{+},g)),\\
\end{cases}
\end{align*}
 belongs to the class $C^{1}(\Omega(\Theta^{-})\cup \operatorname{Ang}(s_{2}) \cup \Omega(\Theta^{+})\cup \Gamma(J^{-}\cup J^{+})).$
\end{Le}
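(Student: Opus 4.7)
The plan is to reduce the $C^{1}$ statement to a single identity between the two gradient vectors $t^{\pm}(s_2)$ at the corner point $(s_2,g(s_2))$, and then invoke Lemma~\ref{dev} on each side. Inside $\Omega(\Theta^{-})$ and $\Omega(\Theta^{+})$ the function $\Bell$ equals $\Bell^{\pm}$, which is $C^{2}$ by Lemma~\ref{dev}; inside the open cone $\operatorname{Ang}(s_2)^{\circ}$ it is affine by construction. Hence the only loci where differentiability must be verified are the two seams $\ell^{-}(s_2)$ and $\ell^{+}(s_2)$ separating $\operatorname{Ang}(s_2)$ from $\Omega(\Theta^{\pm})$, together with the boundary arc $\Gamma(J^{-}\cup J^{+})$ at the corner $(s_2,g(s_2))$.

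The central step is to promote the hypothesis $t_2^{-}(s_2)=t_2^{+}(s_2)$ to the full equality $t^{-}(s_2)=t^{+}(s_2)$ in $\mathbb{R}^{2}$. For this I specialize (\ref{bdcondition}) at $s=s_2$ on each side, which gives
\[
t_1^{\pm}(s_2)=f'(s_2)-t_2^{\pm}(s_2)\,g'(s_2),
\]
so the assumption on second coordinates propagates to the first. Once this is established, the two affine functions
\[
L^{\pm}(x)=f(s_2)+\langle t^{\pm}(s_2),\,x-(s_2,g(s_2))\rangle
\]
coincide on all of $\mathbb{R}^{2}$; call the common function $L$. By Lemma~\ref{dev}, $\Bell^{\pm}$ restricted to $\ell^{\pm}(s_2)$ agrees with $L^{\pm}|_{\ell^{\pm}}=L|_{\ell^{\pm}}$ and has constant gradient $t^{\pm}(s_2)=\nabla L$ there. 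Since $\Bell_{\operatorname{Ang}}$ is, by construction, the unique affine function on $\operatorname{Ang}(s_2)$ interpolating the values of $\Bell^{\pm}$ along the two bounding rays $\ell^{\pm}$, I conclude $\Bell_{\operatorname{Ang}}\equiv L$ on $\operatorname{Ang}(s_2)$. In particular $\nabla\Bell_{\operatorname{Ang}}\equiv t^{-}(s_2)=t^{+}(s_2)$, which gives both continuity of $\Bell$ and matching of $\nabla\Bell$ across each seam.

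Finally, for $C^{1}$ regularity up to $\Gamma(J^{-}\cup J^{+})$ I apply Lemma~\ref{dev}(1) separately on each side to obtain $\Bell^{\pm}\in C^{1}(\Omega(\Theta^{\pm})\cup\Gamma(J^{\pm}))$ with boundary gradient $t^{\pm}(s)$; continuity of this gradient at the corner $(s_2,g(s_2))$ is again the identity $t^{-}(s_2)=t^{+}(s_2)$. I do not anticipate a genuine obstacle here: the whole lemma rests on the single algebraic observation that $t_2^{-}(s_2)=t_2^{+}(s_2)$ plus the linear relation (\ref{bdcondition}) forces full gradient agreement at the corner, after which all smoothness claims follow mechanically from Lemma~\ref{dev} and the affineness of $\Bell_{\operatorname{Ang}}$.
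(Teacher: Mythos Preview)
Your proof is correct and follows essentially the same approach as the paper: both arguments use (\ref{bdcondition}) at $s=s_2$ to promote $t_2^{-}(s_2)=t_2^{+}(s_2)$ to $t^{-}(s_2)=t^{+}(s_2)$, then invoke the constancy of $\nabla\Bell^{\pm}$ along $\ell^{\pm}(s_2)$ together with the common boundary value $\Bell^{-}(s_2,g(s_2))=\Bell^{+}(s_2,g(s_2))=f(s_2)$. Your write-up is simply more explicit about identifying $\Bell_{\operatorname{Ang}}$ with the common affine function $L$ and about where Lemma~\ref{dev} is applied, but the logical content is the same.
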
 
\begin{proof}
 By (\ref{bdcondition}) the condition $t_{2}^{-}(s_{2})=t_{2}^{+}(s_{2})$ is equivalent to the condition $t^{-}(s_{2})=t^{+}(s_{2})$. We recall that  the gradient of $\Bell^{-}$ is constant on $\ell^{-}(s_{2})$, and the gradient of $\Bell^{+}$ is constant on $\ell^{+}(s_{2})$, therefore the lemma follows immediately from the fact that $\Bell^{-}(s_{2},g(s_{2})) = \Bell^{+}(s_{2},g(s_{2})).$
\end{proof}
\begin{Rem}\label{razryv}
The fact $\Bell \in C^{1}$ implies that its gradient function $t(s) = \nabla \Bell$ is well defined, and it is continuous. Unfortunately, it is not necessarily true that $t(s) \in C^{1}([s_{1},s_{3}])$. However, it is clear that $t(s) \in C^{1}([s_{1},s_{2}])$, and $t(s) \in C^{1}([s_{2},s_{3}])$.
\end{Rem}

Finally we finish this section with the following important corollary about \emph{concave extension} of the functions with zero gaussian curvature.

Let $\Bell^{-}$ and $\Bell^{+}$ be defined as above (see Figure~\ref{fig:gluing}). Assume that $t_{2}^{-}(s_{2})=t_{2}^{+}(s_{2})$.

\begin{Cor}\label{rassh}
If $\Bell^{-}$ is concave in $\Omega(\Theta^{-})$ and the torsion of the curve $(s,g(s),f(s))$ is nonnegative on $J^{+}=[s_{2},s_{3}]$ then the function $\Bell$  defined in Lemma~\ref{gluing} is concave in the domain $\Omega(\Theta^{-})\cup \operatorname{Ang}(s_{2}) \cup \Omega(\Theta^{+})$.
\end{Cor}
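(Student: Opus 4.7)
The strategy is to verify concavity piece by piece and then glue.

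First I would use the seam condition: by Corollary~\ref{sledstviesily} the identity $\mathcal{F}(s)=f''(s)/g''(s)-t_{2}(s)$ translates the matching $t_{2}^{-}(s_{2})=t_{2}^{+}(s_{2})$ into $\mathcal{F}^{-}(s_{2})=\mathcal{F}^{+}(s_{2})$. Since $\Bell^{-}$ is concave on $\Omega(\Theta^{-})$, Corollary~\ref{sledstviesily} yields $\mathcal{F}^{-}\leq 0$ throughout $J^{-}$, and in particular the seam value $\mathcal{F}^{+}(s_{2})$ is nonpositive. This is the bridge that carries information from the concavity of $\Bell^{-}$ across the angle.

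Next I would apply the integral representation~(\ref{silfun}) to $\mathcal{F}^{+}$ on $J^{+}=[s_{2},s_{3}]$, with $s_{2}$ now playing the role of the initial point. The torsion hypothesis controls the sign of $[f''/g'']'$ along $J^{+}$, so combined with the seam bound $\mathcal{F}^{+}(s_{2})\leq 0$ both summands in~(\ref{silfun}) carry the correct sign and $\mathcal{F}^{+}\leq 0$ on all of $J^{+}$. By Corollary~\ref{sledstviesily} (equivalently, Corollary~\ref{torsion} with $s_{1}$ replaced by $s_{2}$) this gives concavity of $\Bell^{+}$ on $\Omega(\Theta^{+})$. On the region $\operatorname{Ang}(s_{2})$ the piece $\Bell_{\operatorname{Ang}}$ is linear by construction, hence trivially concave there.

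Finally I would glue the three concave pieces. Lemma~\ref{gluing} (applicable precisely because $t_{2}^{-}(s_{2})=t_{2}^{+}(s_{2})$) gives $\Bell\in C^{1}$ on $\Omega(\Theta^{-})\cup\operatorname{Ang}(s_{2})\cup\Omega(\Theta^{+})\cup\Gamma(J^{-}\cup J^{+})$. To upgrade piecewise concavity to global concavity I would restrict $\Bell$ to an arbitrary line segment $\sigma$ contained in this union: the restriction is $C^{1}$ on $\sigma$ and $C^{2}$ on each of the at most three sub-intervals into which $\ell^{-}(s_{2})$ and $\ell^{+}(s_{2})$ cut $\sigma$. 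On each sub-interval the second derivative is nonpositive, because on each piece the Hessian of $\Bell$ is negative semidefinite (trace $\leq 0$ by Lemma~\ref{traceeq} combined with $\mathcal{F}\leq 0$, and determinant $=0$ by the homogeneous Monge--Amp\`ere equation), and a $C^{1}$ function on an interval whose second derivative is nonpositive almost everywhere is concave. Since $\sigma$ was arbitrary, $\Bell$ is concave on the whole domain.

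The main obstacle I anticipate is the second step: squeezing the full pointwise bound $\mathcal{F}^{+}\leq 0$ on $J^{+}$ out of only the one-sided seam bound $\mathcal{F}^{+}(s_{2})\leq 0$ together with the torsion assumption, via~(\ref{silfun}). The $C^{1}$ gluing and the one-dimensional concavity reduction are then routine.
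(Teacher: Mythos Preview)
Your proposal is correct and follows essentially the same route as the paper: use Corollary~\ref{sledstviesily} to turn concavity of $\Bell^{-}$ into $\mathcal{F}^{-}(s_{2})\le 0$, convert the seam condition $t_{2}^{-}(s_{2})=t_{2}^{+}(s_{2})$ into $\mathcal{F}^{+}(s_{2})=\mathcal{F}^{-}(s_{2})$ via~(\ref{silfun}), invoke the torsion assumption to get $\mathcal{F}^{+}\le 0$ on $J^{+}$ (the paper simply cites Corollary~\ref{torsion} here), and then glue through Lemma~\ref{gluing}. The only difference is that you spell out the line-restriction argument for passing from piecewise concavity plus $C^{1}$ to global concavity, whereas the paper leaves this implicit in the appeal to Lemma~\ref{gluing}.
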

In other words the corollary tells us that if we have constructed concave function $\Bell^{-}$ which satisfies homogeneous Monge--Amp\`ere equation, and we glued $\Bell^{-}$ smoothly with $\Bell^{+}$ (which also satisfies homogeneous Monge--Amp\`ere equation), then the result $\Bell$ is concave function provided that the space curve $(s,g(s),f(s))$ has nonnegative torsion on the interval $J^{+}$. 
\begin{proof}
By Lemma~\ref{sledstviesily} concavity of $\Bell^{-}$ implies $\mathcal{F}^{-}(s_{2})\leq 0$. By (\ref{silfun})
the condition $t_{2}^{-}(s_{2})=t_{2}^{+}(s_{2})$ is equivalent to $\mathcal{F}^{-}(s_{2})=\mathcal{F}^{+}(s_{2})$. By Corollary~\ref{torsion} we get that $\Bell^{+}$ is concave. Thus, concavity of $\Bell$ follows from Lemma~\ref{gluing}.
\end{proof}

\subsection{Cup}\label{cupsection}

 \begin{wrapfigure}[12]{r}{0pt}
\includegraphics[scale=0.8]{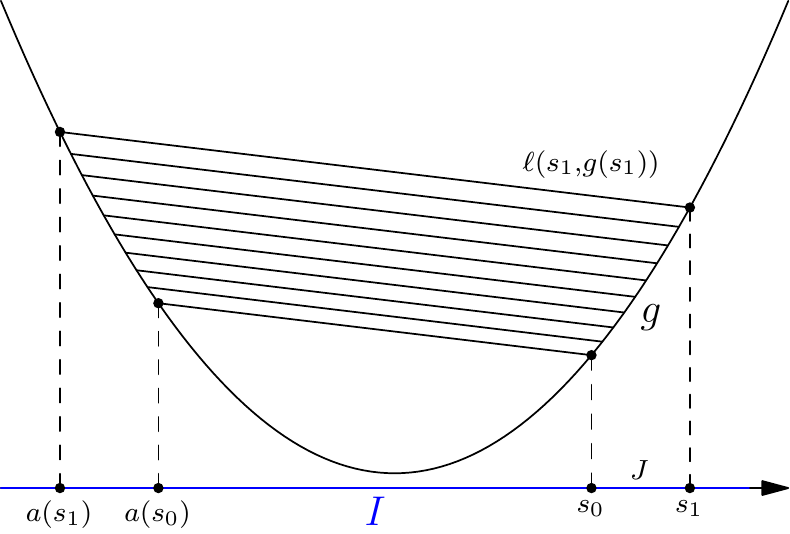}
\caption{Foliation $\Theta_{\operatorname{cup}}(J,g)$}
\label{fig:cup}
\end{wrapfigure}
In this subsection we are going to consider a special type of foliation which is called {\em Cup}.
Fix an interval $I$ and consider an arbitrary curve $(s,g(s),f(s)) \in C^{3}(I)$. We suppose that $g'' >0$ on $I$. Let $a(s) \in C^{1}(J)$ be a function such that $a'(s) <0$ on $J$, where $J=[s_{0},s_{1}]$ is a subinterval of $I$. Assume that $a(s_{0})<s_{0}$ and $[a(s_{1}),a(s_{0})] \subset I$. Consider a set of open segments $\Theta_{\operatorname{cup}}(J,g)$ consisting of those segments $\ell(s,g(s)), s \in J$ such that $\ell(s,g(s))$ is a segment  in the plane joining the  points $(s,g(s))$ and $(a(s),g(a(s)))$ (see Figure~\ref{fig:cup}).
\begin{Le}\label{cupfoliation}
The set of segments $\Theta_{\operatorname{cup}}(J,g)$ described above forms a foliation. 
\end{Le}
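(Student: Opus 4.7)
The statement amounts to verifying the six axioms in the definition of a foliation for the family $\Theta_{\operatorname{cup}}(J,g)$. Items 1--4 and 6 follow almost directly from the standing hypotheses ($g''>0$, $a\in C^{1}$ with $a'<0$, $a(s_{0})<s_{0}$, and $[a(s_{1}),a(s_{0})]\subset I$); the technical heart of the argument is axiom 5, and I plan to attack it last.

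First I would dispatch the easier items. For item 1, each open chord $\ell(s)^{\circ}$ joins two points on the boundary curve $(t,g(t))_{t\in I}$, so it lies in the interior of the convex domain $\Omega$. For item 3, since $a'<0$ and $a(s_{0})<s_{0}$, the image $a(J)\subset[a(s_{1}),a(s_{0})]$ is disjoint from $J$; hence each chord has exactly one endpoint of the form $(s,g(s))$ with $s\in J$, and the map $s\mapsto \ell(s)$ is a bijection. For item 4, the direction vector $v(s):=(a(s)-s,\,g(a(s))-g(s))$ is $C^{1}$ and nowhere zero on $J$, so a $C^{1}$ branch of $\theta(s)=\arg v(s)$ exists. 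For item 2, given $s<\tilde s$ in $J$, the $x$-projections satisfy $a(\tilde s)<a(s)<s<\tilde s$, and strict convexity of $g$ places the chord $\ell(\tilde s)$ strictly above the curve (hence above $\ell(s)$) at both $x=a(s)$ and $x=s$, so the two chords are disjoint. Item 6 then follows from continuous dependence of the chords on $s$ combined with the bijection from item 3.

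The main obstacle is axiom 5. For $x\in\ell(s)^{\circ}$ write $x-(s,g(s))=\tau\,v(s)$ with $\tau\in(0,1)$, so $\|x-(s,g(s))\|=\tau\lambda(s)$ with $\lambda:=\|v\|$, and
\begin{align*}
K(s)+\theta'(s)\,\|x-(s,g(s))\|=(1-\tau)K(s)+\tau\bigl[K(s)+\lambda(s)\theta'(s)\bigr].
\end{align*}
A short Taylor argument using $g''>0$ and $a(s)<s$ gives $\lambda(s) K(s)=g'(s)(a-s)-(g(a)-g(s))<0$, so $K(s)<0$, and it remains to show $K(s)+\lambda(s)\theta'(s)\le 0$. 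Computing $\theta'$ from $\lambda^{2}\theta'=v_{1}v_{2}'-v_{2}v_{1}'$ with $v_{1}'=a'-1$ and $v_{2}'=g'(a)a'-g'(s)$, the cross-term with $-g'(s)$ cancels against $\lambda K$, yielding the clean identity
\begin{align*}
K(s)+\lambda(s)\theta'(s)=\frac{a'(s)\,\bigl[(a(s)-s)\,g'(a(s))-(g(a(s))-g(s))\bigr]}{\lambda(s)}.
\end{align*}
The bracketed factor is strictly positive (strict convexity forces the tangent line at $a(s)$ to lie strictly below $g$ at $s$), and $a'(s)<0$, so $K+\lambda\theta'<0$ strictly; together with the convex-combination identity above, this gives axiom 5 with strict inequality. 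The only real subtlety is spotting this algebraic collapse onto a single sign-controlled term; every other axiom reduces to the geometrically transparent nested-chord picture of a strictly convex curve.
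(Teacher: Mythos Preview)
Your proof is correct and follows essentially the same line as the paper's. The paper dismisses items 1--3 and 6 as trivial, writes down the explicit argument function $\theta(s)=\pi+\arctan\bigl((g(s)-g(a(s)))/(s-a(s))\bigr)$ for item 4, and for item 5 invokes the earlier remark that it suffices to check the inequality at the far endpoint $x=(a(s),g(a(s)))$; it then computes exactly your identity $K+\lambda\theta'=a'(s)\,\langle(1,g'(a)),(g-g(a),a-s)\rangle/\lambda$ and concludes by convexity of $g$ and $a'<0$. Your convex-combination formulation of the reduction to the endpoint is just an explicit spelling-out of that remark, and your cross-product computation of $\theta'$ lands on the same expression.
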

\begin{proof}
We need to check the 6 requirements for a set to be the foliation. Most of them are trivial except  for 4 and 5. We know the endpoints of each segment therefore we can consider the following argument function 
\begin{align*}
\theta(s) = \pi + \arctan \left(\frac{g(s)-g(a(s))}{s-a(s)}\right).
\end{align*}
Surely $\theta(s) \in C^{1}(J)$, so requirement 4 is satisfied. We check requirement 5. It is clear that it is enough to check this requirement for $x= (a(s),g(a(s))$. Let $s=s(x)$, then
\begin{align*}
&K(s) + \theta'(s)\|(a(s)-s,g(a(s))-g(s))\| =\frac{\langle (1, g'), (g-g(a),a-s)\rangle }{\| (g(a)-g,s-a)\|}+\\
&\frac{(g' - a'g'(a))(s-a)-(1-a')(g-g(a))}{\| (g(a)-g,s-a)\|} = \frac{a' \cdot \langle (1,g'(a)), (g-g(a),a-s)\rangle }{\| (g(a)-g,s-a)\|}
\end{align*}
which is strictly negative.
\end{proof}

Let $\gamma(t) = (t,g(t),f(t)) \in C^{3}([a_{0},b_{0}])$ be an arbitrary curve such that $g''>0$ on $[a_{0},b_{0}]$. Assume that the torsion of $\gamma$ is positive on $I^{-}=(a_{0},c)$, and it is negative on $I^{+}=(c,b_{0})$ for some $c\in (a_{0},b_{0})$.
\begin{Le}\label{existence}
For all $P$ such that $0<P<\min\{c-a_{0},b_{0}-c \}$ there exist $a \in I^{-}$, $b\in I^{+}$ such that $b-a=P$ and 
\begin{align}\label{cupeq0}
\begin{vmatrix}
1& 1 & a-b \\
g'(a)& g'(b) & g(a)-g(b) \\
f'(a)    & f'(b)     & f(a)-f(b) \\
\end{vmatrix} 
=0.
\end{align}
\end{Le}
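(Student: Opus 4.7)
The plan is to apply the intermediate value theorem to
\begin{align*}
\varphi(a) \df \Phi(a,a+P), \qquad \Phi(a,b) \df \det\begin{pmatrix} 1 & 1 & a-b \\ g'(a) & g'(b) & g(a)-g(b) \\ f'(a) & f'(b) & f(a)-f(b) \end{pmatrix},
\end{align*}
on the interval $[c-P,\,c]$. The hypothesis $0<P<\min\{c-a_0,\,b_0-c\}$ ensures $[c-P,c]\subset[a_0,c]$ and $[c,c+P]\subset[c,b_0]$, so $\varphi\in C([c-P,c])$. I will establish the strict endpoint signs $\varphi(c-P)>0$ and $\varphi(c)<0$; IVT will then produce $a^{*}\in(c-P,c)\subset I^{-}$ with $\varphi(a^{*})=0$, and $b^{*}\df a^{*}+P\in(c,c+P)\subset I^{+}$ will satisfy the required determinant identity.

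The key intermediate step is a double-integral representation of $\Phi$ in which the integrand is controlled by the torsion of $\gamma$. Writing $C_{1},C_{2},C_{3}$ for the columns of the defining matrix, one sees directly that $\Phi(a,a)=0$ (since $C_{3}=0$ there), and that both $\partial_{a}\Phi$ and $\partial_{b}\Phi$ vanish on the diagonal $b=a$: differentiating columnwise and using $\partial_{a}C_{3}=C_{1}$ and $\partial_{b}C_{3}=-C_{2}$ produces terms carrying a repeated column, while the remaining terms inherit $C_{3}=0$ on the diagonal. A further columnwise differentiation gives the clean formula
\begin{align*}
\partial_{a}\partial_{b}\Phi(a,b) = (a-b)\bigl(g''(a)f''(b)-g''(b)f''(a)\bigr) = -(b-a)\,g''(a)g''(b)\bigl(h(b)-h(a)\bigr),
\end{align*}
where $h(t)\df f''(t)/g''(t)$. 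Applying the fundamental theorem of calculus twice, starting from the vanishing boundary values on the diagonal, I obtain for $a\leq b$
\begin{align*}
\Phi(a,b)=\int_{a}^{b}\!\!\int_{a}^{\beta}(\beta-\alpha)\,g''(\alpha)g''(\beta)\bigl(h(\beta)-h(\alpha)\bigr)\,d\alpha\,d\beta.
\end{align*}

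The torsion formula displayed just before Corollary~\ref{torsion} shows that $\sign(\text{torsion})=\sign(h')$, because the prefactor $(g'')^{2}/\|\gamma'\times\gamma''\|^{2}$ is strictly positive. Consequently $h$ is strictly increasing on $I^{-}$ and strictly decreasing on $I^{+}$, and by continuity remains strictly monotone in the corresponding sense on $[c-P,c]$ and $[c,c+P]$. In the integrand above, $(\beta-\alpha)g''(\alpha)g''(\beta)>0$ for $\alpha<\beta$ (since $g''>0$), so the sign of the integrand is exactly that of $h(\beta)-h(\alpha)$: strictly positive on $[c-P,c]$ and strictly negative on $[c,c+P]$, on a set of full measure. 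Hence $\Phi(c-P,c)>0$ and $\Phi(c,c+P)<0$, and IVT yields the desired $a^{*}$.

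The main obstacle is producing the double-integral representation, in particular confirming that both $\partial_{a}\Phi$ and $\partial_{b}\Phi$ vanish on the diagonal so that the two successive applications of the fundamental theorem of calculus start from zero; once this is in place, the sign analysis via $h=f''/g''$ is immediate from $g''>0$ and the torsion hypothesis.
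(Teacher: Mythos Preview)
Your proof is correct and takes a genuinely different route from the paper's.

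The paper factors the determinant as $\Phi(a,b)=\mathcal{M}(a,b)\cdot D(a,b)$, where $\mathcal{M}$ has a fixed sign and $D$ is the bracket
\[
D(a,b)=\frac{f(a)-f(b)-f'(a)(a-b)}{g(a)-g(b)-g'(a)(a-b)}-\frac{f'(b)-f'(a)}{g'(b)-g'(a)}.
\]
It then rewrites the first quotient via Cauchy's mean value theorem as a secant slope $W_{a}(\xi)$ of the planar curve $v(s)=(g'(s),f'(s))$, and reads off the sign of $D$ from convexity of $v$ on $I^{-}$ and concavity on $I^{+}$, finally applying the intermediate value theorem on $[a_{0},c]$.

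Your approach bypasses the factorization and mean value step entirely: you verify $\Phi$ and $\partial_{b}\Phi$ vanish on the diagonal, compute the mixed partial exactly, and integrate twice to obtain
\[
\Phi(a,b)=\int_{a}^{b}\!\!\int_{a}^{\beta}(\beta-\alpha)\,g''(\alpha)g''(\beta)\bigl(h(\beta)-h(\alpha)\bigr)\,d\alpha\,d\beta,\qquad h=\tfrac{f''}{g''}.
\]
Since $\sign(\text{torsion})=\sign(h')$, the monotonicity of $h$ on each of $[c-P,c]$ and $[c,c+P]$ gives the endpoint signs directly. This makes the dependence on torsion completely explicit and avoids any auxiliary geometric picture. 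A small bonus of your choice of interval $[c-P,c]$ (rather than the paper's $[a_{0},c]$) is that the root automatically satisfies $a^{*}\in I^{-}$ and $b^{*}=a^{*}+P\in I^{+}$; in the paper's argument the latter inclusion is implicit (any root with $b\le c$ would force $D<0$ by the same convexity reasoning). Conversely, the paper's secant-slope viewpoint feeds directly into the subsequent Lemma~\ref{functiona}, where inequalities such as (\ref{differentialnuzhen}) and (\ref{differentiala}) are read off from the same picture of $v(s)$; your integral representation does not immediately supply those.
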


 \begin{proof}
 Pick a number $a \in (a_{0},b_{0})$ so that $b=a+P \in (a_{0},b_{0})$.
We denote 
\begin{align*}
\mathcal{M}(a,b)  = (a-b)(g'(b)-g'(a))\left(\frac{g(a)-g(b)}{a-b}-g'(a)\right).
\end{align*}
 Note that the conditions  $a \neq b$ and $g''>0$ imply $\mathcal{M}(a,b) \neq 0$.
Then 
\begin{align*}
&
\begin{vmatrix}
1& 1 & a-b \\
g'(a)& g'(b) & g(a)-g(b) \\
f'(a)    & f'(b)     & f(a)-f(b) \\
\end{vmatrix}=
\mathcal{M}(a,b)\left[\frac{f(a)-f(b)-f'(a)(a-b)}{g(a)-g(b)-g'(a)(a-b)}-\frac{f'(b)-f'(a)}{g'(b)-g'(a)} \right].
\end{align*}
Thus our equation (\ref{cupeq0}) turns into

\begin{align}\label{cupeq}
\frac{f(a)-f(b)-f'(a)(a-b)}{g(a)-g(b)-g'(a)(a-b)}-\frac{f'(b)-f'(a)}{g'(b)-g'(a)}=0.
\end{align} 
We consider the following  functions $V(x) = f(x)-f'(a)x$ and $U(x) = g(x) - g'(a)x$. Note that $U(a)\neq U(b)$ and $U' \neq 0$ on $(a,b)$. Therefore by Cauchy's mean value theorem there exists a point $\xi=\xi(a,b) \in (a,b)$ such that
\begin{align*}
&\frac{f(a)-f(b)-f'(a)(a-b)}{g(a)-g(b)-g'(a)(a-b)} = \frac{V(a) - V(b)}{U(a)-U(b)} = \frac{V'(\xi)}{U'(\xi)} =\frac{f'(\xi)-f'(a)}{g'(\xi)-g'(a)}.
\end{align*}
Now we define 
\begin{align*}
W_{a}(z)  \df \frac{f'(z)-f'(a)}{g'(z)-g'(a)}, \quad z \in (a,b].
\end{align*}
So the left hand side of (\ref{cupeq}) takes the form $W_{a}(\xi) - W_{a}(b)=0$ for some $\xi(a,P) \in (a,b)$. 
We consider the curve $v(s) = (g'(s),f'(s))$ which is a graph on $[a_{0},b_{0}]$. The fact that the torsion of the curve $\gamma(s) = (s,g(s),f(s))$ changes sign from + to $-$ at the point $c \in (a_{0},b_{0})$ means that the curve $v(s)$ is strictly convex on the interval $(a_{0},c)$, and it is strictly concave on the interval $(c,b_{0})$.
We consider a function obtained from (\ref{cupeq})
\begin{align}\label{uroven}
D(z) \df \frac{f(z)-f(z+P)+f'(z) P}{g(z)-g(z+P)+g'(z) P}-\frac{f'(z+P)-f'(z)}{g'(z+P)-g'(z)},\quad z \in [a_{0},c].
\end{align}
Note that $D(a_{0}) = W_{a_{0}}(\zeta) - W_{a_{0}}(a_{0}+P)$ for some $\zeta=\zeta(a_{0},P) \in (a_{0},a_{0}+P)$. We know that $v(s)$ is strictly convex on the interval $(a_{0},a_{0}+P)$. This implies that $W_{a_{0}}(z) - W_{a_{0}}(a_{0}+P) <0$ for all $z \in (a_{0},a_{0}+P)$. In particular  $D(a_{0}) <0$. Similarly, concavity of $v(s)$ on $(c,c+P)$ implies that $D(c) >0$. Hence, there exists $a \in (a_{0},c)$ such that $D(a)=0$. 
\end{proof}
 Let $a_{1}$ and $b_{1}$ be  some solutions  of (\ref{cupeq0}) obtained by Lemma~\ref{existence}. 
\begin{Le}\label{functiona}
There exists a function $a(s) \in C^{1}((c,b_{1}])\cap C([c,b_{1}])$ such that $a(b_{1})=a_{1}$, $a(c)=c,$ $a'(s) <0$, and the pair $(a(s),s)$ solves the equation (\ref{cupeq0}) for all $s\in [c,b_{1}]$.
\end{Le}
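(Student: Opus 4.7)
The plan is to apply the implicit function theorem to the determinant equation $F(a,b) = 0$ from (\ref{cupeq0}) at the base point $(a_1, b_1)$, and then extend by continuation along the zero set until reaching the degenerate diagonal point $(c, c)$. Geometrically, $F(a, b) = 0$ expresses that the tangent lines to $\gamma(s) = (s, g(s), f(s))$ at parameters $a$ and $b$ are coplanar; this cuts out a one-dimensional subvariety of the $(a, b)$-plane, which contains the diagonal trivially and, by Lemma~\ref{existence}, admits nontrivial components crossing $\{a<c<b\}$.

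The central computation is $\partial_a F$. Differentiating the determinant column-by-column and using that $\partial_a$ of the third column coincides with the first column (so one of the two resulting determinants vanishes), one obtains
\[
\partial_a F(a, b) \;=\; g''(a) \int_a^b (t - a)\, g''(t)\, \bigl[\phi(a) - \phi(t)\bigr]\, dt, \qquad \phi := \frac{f''}{g''},
\]
and $\phi'$ carries the sign of the torsion of $\gamma$. The hypothesis (positive torsion on $I^-$, negative on $I^+$) says $\phi$ is strictly increasing on $I^-$ and strictly decreasing on $I^+$, and from this one verifies that the integral is nonzero at the solution $(a_1, b_1)$ and, more generally, at every nontrivial solution in $I^- \times I^+$ away from the diagonal. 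The implicit function theorem then yields a $C^1$ branch $a(s)$ near $s = b_1$ with $a(b_1) = a_1$. The symmetric formula for $\partial_b F$, combined with the same convexity/concavity dichotomy, shows $\partial_a F$ and $\partial_b F$ share a sign along this branch, so $a'(s) = -\partial_b F / \partial_a F < 0$.

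Finally, I would extend $a(s)$ leftward to a maximal interval $(c_*, b_1]$. Monotonicity and boundedness produce a limit $\alpha = \lim_{s \downarrow c_*} a(s) \in [a_0, c]$. The nonvanishing of $\partial_a F$ just established prevents the continuation from stopping in the interior of $I^- \times I^+$; it can stop only at the diagonal point where $F$ and $\partial_a F$ both vanish, forcing $c_* = c$ and $\alpha = c$. This yields $a \in C^1((c, b_1]) \cap C([c, b_1])$ with the prescribed monotonicity and boundary values. The main obstacle is precisely this nonvanishing statement for $\partial_a F$ (together with the sign concordance with $\partial_b F$) on the relevant component of $\{F = 0\}$; both facts are controlled by the torsion sign change at $c$, and once they are in place the rest is a standard implicit-function continuation.
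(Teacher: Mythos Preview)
Your overall strategy---implicit function theorem plus continuation to the diagonal---is exactly the paper's. Your integral formula
\[
\partial_a F(a,b)=g''(a)\int_a^b (t-a)\,g''(t)\,[\phi(a)-\phi(t)]\,dt,\qquad \phi=f''/g'',
\]
is correct (and the companion formula is $\partial_b F=g''(b)\int_a^b (b-t)\,g''(t)\,[\phi(b)-\phi(t)]\,dt$). But the step ``one verifies that the integral is nonzero at the solution $(a_1,b_1)$'' and the claimed sign concordance of $\partial_a F$ and $\partial_b F$ are the heart of the matter, and they do \emph{not} follow from the torsion hypothesis alone. Unimodality of $\phi$ (increasing on $I^-$, decreasing on $I^+$) pins down the sign of only \emph{one} of the two integrals: if $\phi(a)\le\phi(b)$ then $\phi(a)-\phi(t)\le 0$ on $(a,b)$ and $\partial_a F<0$ is immediate, while if $\phi(a)\ge\phi(b)$ then $\phi(b)-\phi(t)\le 0$ on $(a,b)$ and $\partial_b F<0$ is immediate. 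In the remaining case for each partial the integrand changes sign, and nothing in the torsion picture by itself forces the net sign---so you cannot conclude that both partials are negative, which is precisely what you need for $a'(s)=-\partial_bF/\partial_aF<0$.

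The paper closes this gap by bringing in the constraint $F(a,b)=0$ explicitly. At a solution the cup equation lets one rewrite the partials as
\[
\partial_a F=(\text{pos})\Bigl[\phi(a)-\tfrac{f'(b)-f'(a)}{g'(b)-g'(a)}\Bigr],\qquad
\partial_b F=(\text{neg})\Bigl[\tfrac{f'(b)-f'(a)}{g'(b)-g'(a)}-\phi(b)\Bigr],
\]
i.e.\ as comparisons between the chord slope on the auxiliary curve $v(s)=(g'(s),f'(s))$ and the tangent slopes $\phi(a),\phi(b)$. The cup equation (via Cauchy's mean value theorem) says this chord hits $v$ at an interior point $v(\xi)$; combined with the convex-then-concave shape of $v$, this forces the chord slope to exceed \emph{both} $\phi(a)$ and $\phi(b)$, giving $\partial_aF<0$ and $\partial_bF<0$ simultaneously. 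Your integral representation is elegant, but to finish you must feed $F=0$ back in---either by the paper's slope-comparison trick, or by some integral identity relating $F$ to your expressions for $\partial_aF,\partial_bF$. As written, the proposal asserts the needed signs without supplying that mechanism.
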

\begin{proof}
The proof of the lemma is a consequence of the implicit function theorem. 
Let $a<b$, and consider the function 
\begin{align*}
\Phi(a,b) \df 
\begin{vmatrix}
1& 1 & a-b \\
g'(a)& g'(b) & g(a)-g(b) \\
f'(a)    & f'(b)     & f(a)-f(b) \\
\end{vmatrix}.
\end{align*}
We are going to find the signs of the partial derivatives of $\Phi(a,b)$ at the point $(a,b)=(a_{1},b_{1})$. We present the calculation only for $\partial \Phi/\partial b$. The case for $\partial \Phi/\partial a$ is similar. 

\begin{align*}
&\frac{\partial \Phi(a,b)}{\partial b}=
\begin{vmatrix}
1& 0 & a-b \\
g'(a)& g''(b) & g(a)-g(b) \\
f'(a) &f''(b)  & f(a)-f(b) \\
\end{vmatrix}
=
\\
&=(a-b)g''(b)\left(\frac{g(a)-g(b)}{a-b}-g'(a)\right)\left[\frac{f(a)-f(b)-f'(a)(a-b)}{g(a)-g(b)-g'(a)(a-b)}-\frac{f''(b)}{g''(b)} \right].
\end{align*}
 
Note that
\begin{align*}
(a-b)g''(b)\left(\frac{g(a)-g(b)}{a-b}-g'(a)\right) <0,
\end{align*}
therefore  we see that the sign of $\partial \Phi/\partial b$ depends only on the sign of the expression 
 
\begin{align}\label{difer1}
\frac{f(a)-f(b)-f'(a)(a-b)}{g(a)-g(b)-g'(a)(a-b)}-\frac{f''(b)}{g''(b)}.
\end{align}
 
We use the {\em cup equation} (\ref{cupeq}), and we obtain that the expression (\ref{difer1}) at the point $(a,b)=(a_{1},b_{1})$ takes the following form:

\begin{align}\label{differential}
\frac{f'(b)-f'(a)}{g'(b)-g'(a)}-\frac{f''(b)}{g''(b)}.
\end{align} 
The above expression has the following geometric meaning. We consider the curve $v(s)=(g'(s),f'(s))$, and we draw a segment which connects the points $v(a)$ and $v(b)$. The above expression is the difference between the slope of the line which passes through the segment $[v(a),v(b)]$ and the slope of the  tangent line of the curve $v(s)$ at the point $b$.
 In the case as it is shown on Figure~\ref{fig:slopes}, this difference is positive. Recall that $v(s)$ is strictly convex on $(a_{1},c)$, and it is strictly concave on $(c,b_{1})$. Therefore,  one can easily note that this expression (\ref{differential}) is always positive if the segment $[v(a),v(b)]$ also intersects the curve $v(s)$ at a point $\xi$ such that $a<\xi<b$. This always happens in our case because equation (\ref{cupeq}) means that the points $v(a),v(\xi),v(b)$ lie on  the same line, where $\xi$ was determined from  Cauchy's mean value theorem. Thus 
 \begin{align}\label{differentialnuzhen}
\frac{f'(b)-f'(a)}{g'(b)-g'(a)}-\frac{f''(b)}{g''(b)} >0.
\end{align} 

 Similarly, we can obtain that $\frac{\partial \Phi}{\partial a} <0$, because this is the same as to show that 
\begin{align}\label{differentiala}
\frac{f'(b)-f'(a)}{g'(b)-g'(a)}-\frac{f''(a)}{g''(a)}> 0.
\end{align}
 Thus, by the implicit function theorem there exists a $C^{1}$ function  $a(s)$  in some neighborhood of $b_{1}$ such that $a'(s) = -\frac{\Phi'_{b}}{\Phi'_{a}} <0$, and the pair $(a(s),s)$ solves (\ref{cupeq0}).
 
 Now we want to explain that the function $a(s)$ can be defined on $(c,b_{1}]$, and, moreover, $\lim_{s \to c+0}a(s) = c$.  Indeed, 
 whenever $a(s) \in (a_{1},c)$ and $s \in (c,b_{1})$ we can use the implicit function theorem, and we can extend the function $a(s)$. It is clear that for each $s$ we have $a(s) \in [a_{1},c)$ and $s \in (c,b_{1})$. Indeed, if  $a(s),s \in (a_{1},c]$, or $a(s),s \in [c,b_{1})$ then  (\ref{cupeq0}) has a definite sign (see (\ref{uroven})). It follows that $\alpha(s) \in C^{1}((c,b_{1}])$, and  the condition $a'(s)<0$ implies  $\lim_{s \to c+0}a(s) = c$. Hence $a(s) \in C([c,b_{1}]).$
 \end{proof}
 It is worth mentioning that we did not use the fact that the torsion of $(s,g(s),f(s))$ changes sign from + to $-$.  The only thing we needed was that the torsion changes sign. 
 
Let $a_{1}$ and $b_{1}$ be  any solutions of equation (\ref{cupeq0}) from Lemma~\ref{existence}, and let $a(s)$ be any function from Lemma~\ref{functiona}. Fix an arbitrary $s_{1} \in (c,b_{1})$ and consider the foliation $\Theta_{\operatorname{cup}}([s_{1},b_{1}],g)$ constructed by $a(s)$ (see Lemma~\ref{cupfoliation}). Let $\Bell$ be a function defined by (\ref{bellmanfunction}), where 
\begin{align}\label{initiald}
t_{2}(s_{1})=\frac{f'(s_{1})-f'(a(s_{1}))}{g'(s_{1})-g'(a(s_{1}))}.
\end{align}
 Set $\Omega_{\operatorname{cup}}=\Omega(\Theta_{\operatorname{cup}}([s_{1},b_{1}],g)),$ and let $\overline{\Omega_{\operatorname{cup}}}$ be the closure of $\Omega_{\operatorname{cup}}$.
\begin{Le}\label{bellmancup}
The  function $\Bell$ satisfies the following properties
\begin{itemize}
\item[1.] $\Bell \in C^{2}(\Omega_{\operatorname{cup}})\cap C^{1}(\overline{\Omega_{\operatorname{cup}}}) $.
\item[2.] $\Bell(a(s),g(a(s))) = f(a(s))$ for all $s\in [s_{1},b_{1}]$. 
\item[3.] $\Bell$ is a concave function in $\overline{\Omega_{\operatorname{cup}}}$.
\end{itemize}
\end{Le}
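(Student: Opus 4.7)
My plan is to establish the three claims in order, by combining the tools developed above with one key algebraic identification that links the cup equation to the gradient of $\Bell$.

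For (1), since Lemma~\ref{cupfoliation} certifies that $\Theta_{\operatorname{cup}}([s_1,b_1],g)$ is an honest foliation, Lemma~\ref{dev} immediately supplies $\Bell\in C^2(\Omega_{\operatorname{cup}})\cap C^1(\Omega_{\operatorname{cup}}\cup\Gamma([s_1,b_1]))$, the linearity of $\Bell$ on each segment, and the upper boundary value $\Bell(s,g(s))=f(s)$ on $[s_1,b_1]$. To lift $C^1$-regularity to all of $\overline{\Omega_{\operatorname{cup}}}$ I would note that on the two extreme segments $\ell(s_1)$ and $\ell(b_1)$ the function $\Bell$ is linear with constant gradient $t(s_1)$ and $t(b_1)$ respectively, while along the lower arc $\{(a(s),g(a(s))):s\in[s_1,b_1]\}$ the selection function $s(x)$ extends continuously, with $s(x)\to s$ as $x\to(a(s),g(a(s)))$ from the interior, so that $\nabla\Bell(x)=t(s(x))\to t(s)$ by continuity of $t$ on $[s_1,b_1]$.

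For (2), the decisive step is to identify the ODE-determined gradient $(t_1(s),t_2(s))$ of $\Bell$ with the explicit algebraic candidate
\[
t_2^*(s)=\frac{f'(s)-f'(a(s))}{g'(s)-g'(a(s))},\qquad t_1^*(s)=f'(s)-t_2^*(s)g'(s).
\]
The initial condition (\ref{initiald}) already gives $t_2(s_1)=t_2^*(s_1)$. A direct row-expansion of the cup determinant (\ref{cupeq0}) followed by division by $g'(s)-g'(a(s))$ shows that (\ref{cupeq0}) is equivalent to
\[
f(s)-f(a(s))=t_1^*(s)\bigl(s-a(s)\bigr)+t_2^*(s)\bigl(g(s)-g(a(s))\bigr).
\]
Differentiating this in $s$ and using $t_1^*+t_2^*g'(s)=f'(s)$ together with the algebraic consequence $t_1^*+t_2^*g'(a(s))=f'(a(s))$, the non-derivative terms collapse and one is left with $(t_1^*)'(s)(s-a(s))+(t_2^*)'(s)(g(s)-g(a(s)))=0$, which is precisely (\ref{extremal}) since $(\cos\theta(s),\sin\theta(s))$ is a positive multiple of $(a(s)-s,g(a(s))-g(s))$. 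Hence $t^*$ satisfies the same system (\ref{extremal})--(\ref{bdcondition}) as $t$ with the same initial data, so by uniqueness $t\equiv t^*$ on $[s_1,b_1]$. Substituting into (\ref{bellmanfunction}) and invoking the displayed identity yields $\Bell(a(s),g(a(s)))=f(a(s))$.

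For (3), Corollary~\ref{sledstviesily} reduces concavity of $\Bell$ in $\Omega_{\operatorname{cup}}$ to the pointwise bound $\mathcal{F}(s)=f''(s)/g''(s)-t_2(s)\leq 0$, which by step (2) reads $f''(s)/g''(s)\leq(f'(s)-f'(a(s)))/(g'(s)-g'(a(s)))$ on $[s_1,b_1]$. On the auxiliary curve $v(s)=(g'(s),f'(s))$ this compares the tangent slope at $v(s)$ with the chord slope from $v(a(s))$ to $v(s)$. Exactly as in the proof of Lemma~\ref{functiona}, the cup equation forces this chord to also pass through $v(\xi)$ for some $\xi\in(a(s),s)$; since $s\in(c,b_0)\subset I^+$, where the torsion of $(s,g(s),f(s))$ is negative, the curve $v$ is strictly concave on $I^+$, so the chord has strictly greater slope than the tangent at $v(s)$. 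Therefore $\mathcal{F}(s)<0$ on $[s_1,b_1]$, yielding concavity on $\Omega_{\operatorname{cup}}$, which extends to $\overline{\Omega_{\operatorname{cup}}}$ by the $C^1$-regularity from~(1). The main obstacle is the identification $t\equiv t^*$ in step~(2): it hinges on recognizing the cup equation as precisely the compatibility condition making the algebraic candidate $t^*$ solve (\ref{extremal})--(\ref{bdcondition}); once this is in place, both the lower boundary equality and the concavity inequality follow readily.
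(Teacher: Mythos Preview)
Your proof is correct, and your treatment of property~(2) is genuinely different from the paper's. The paper defines $\tilde f(a(s))=\Bell(a(s),g(a(s)))$, derives a first-order linear ODE for $z=f-\tilde f$ by subtracting the cup determinant from the analogous determinant for $\tilde f$, and then invokes ODE uniqueness with $z(a(s_1))=0$. You instead identify the gradient $t(s)$ explicitly with the closed form $t_2^*(s)=\bigl(f'(s)-f'(a(s))\bigr)/\bigl(g'(s)-g'(a(s))\bigr)$ by recognizing that the cup equation is exactly the compatibility condition making $t^*$ solve (\ref{extremal})--(\ref{bdcondition}); the lower boundary value then drops out of (\ref{bellmanfunction}). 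Your route is shorter and has the bonus of producing the explicit formula for $t_2(s)$ on the cup, which the paper needs and quotes later anyway (e.g.\ in (\ref{initiald}) and (\ref{fromcup})). For property~(3) the paper verifies only $\mathcal F(s_1)\le 0$ and then appeals to Corollary~\ref{torsion} (negative torsion on $(c,b_1]$) to propagate the sign, whereas you verify $\mathcal F(s)\le 0$ pointwise using your explicit $t_2$; both arguments ultimately rest on the same geometric inequality (\ref{differentialnuzhen}).
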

 \begin{proof}
 The first property follows from Lemma~\ref{dev} and the fact that $\nabla B(x) = t(s)$ for $s=s(x)$, where $s(x)$ is a continuous function in $\overline{\Omega_{\operatorname{cup}}}$. 
 
 We are going to check the second property. We recall (see (\ref{bdcondition})) that $t_{1}(s) = f'(s)-t_{2}(s)g'(s)$. Condition (\ref{initiald}) implies that 
 \begin{align}\label{initialed}
 t_{1}(s_{1})+t_{2}(s_{1})g'(a(s_{1}))=f'(a(s_{1})).
  \end{align}
  Let $\Bell(a(s),g(a(s)))=\tilde f (a(s))$.  After differentiation of this equality we get  $t_{1}(s_{1})+t_{2}(s_{1})g'(a(s_{1}))=\tilde f '(a(s_{1}))$. Hence, (\ref{initialed}) implies that $f'(a(s_{1})) = \tilde f '(a(s_{1}))$. It is clear that
  \begin{align*}
  &t_{1}(s) + t_{2}(s)g'(s) = f'(s),\\
  &t_{1}(s)+t_{2}(s)g'(a(s))=\tilde f' (a(s)),\\
  &t_{1}(s)(s-a(s))+t_{2}(s)(g(s)-g(a(s)))=f(s)-\tilde f (a(s)),
  \end{align*}
  which implies 
  \begin{align*}
  \begin{vmatrix}
1& 1 & s-a(s) \\
g'(s)& g'(a(s)) & g(s)-g(a(s)) \\
f'(s)    & \tilde f '(a(s))     & f(s)-\tilde f (a(s)) \\
\end{vmatrix}=0.
  \end{align*}
  This equality can be rewritten as follows:
  \begin{align*}
&  f' \cdot 
    \begin{vmatrix}
1 & s-a(s) \\
g'(a(s)) & g(s)-g(a(s)) \\
\end{vmatrix}
-\tilde f ' (a)
    \begin{vmatrix}
1 & s-a(s) \\
g' & g(s)-g(a(s)) \\
\end{vmatrix}+
(f-\tilde f (a))(g'(a(s))-g'(s))=0.
  \end{align*}
  By virtue of Lemma~\ref{functiona} we have the same equality as above except $\tilde f$ is replaced by $f$. We subtract one from another one:
\begin{align*}
  [f(a(s))-\tilde f (a(s))] +[f'(a(s))-\tilde f '(a(s)) ] \cdot 
  \frac{   \begin{vmatrix}
1 & s-a(s) \\
g' & g(s)-g(a(s)) \\
\end{vmatrix}
  }{g'(a(s))-g'(s)}=0. 
\end{align*} 
Note that 
\begin{align*}
 \frac{   \begin{vmatrix}
1 & s-a(s) \\
g' & g(s)-g(a(s)) \\
\end{vmatrix}
  }{g'(a(s))-g'(s)}<0
 \end{align*}
 and $a(s)$ is invertible. Therefore we get the  differential equation $z(u)B(u)+z'(u)=0$ where $B \in C^{1}([a(b_{1}),a(s_{1})])$, $z(u) = f(u)-\tilde f (u)$ and $B <0$. The condition $z'(a(s_{1}))=0$ implies $z(a(s_{1}))=0$. Note that $z=0$ is a trivial solution. Therefore, by uniqueness of solutions to  ODEs we get $z=0$.  
 
 We are going to check the concavity of $\Bell$. Let $\mathcal{F}$ be the force function corresponding to $\Bell$. By Corollary~\ref{torsion} we only need to check that $\mathcal{F}(s_{1})\leq 0$. Note that (\ref{silfun}) and  (\ref{initiald}) imply 
 \begin{align*}
 \mathcal{F}(s_{1})=\frac{f''(s_{1})}{g''(s_{1})}-t_{2}(s_{1})=\frac{f''(s_{1})}{g''(s_{1})}-\frac{f'(s_{1})-f'(a(s_{1}))}{g'(s_{1})-g'(a(s_{1}))},
 \end{align*}
 which is negative by (\ref{differentialnuzhen}).
 \end{proof}
  \begin{Rem}
  The above lemma is true for all choices $s_{1} \in (c,b_{1})$. If we send $s_{1}$ to $c$ then one can easily see that $\lim_{s_{1}\to c+} t_{2}(s_{1})=0$, therefore the force function $\mathcal{F}$ takes the following form
  \begin{align*}
  \mathcal{F}(s) = \int_{c}^{s} \left[\frac{f''(y)}{g''(y)}\right]' \exp\left( -\int_{y}^{s} \frac{g''(r)}{K(r)}\cos \theta (r)dr\right)dy.
  \end{align*}
This is another way to show that the force function is nonpositive.  
 \end{Rem}
 \begin{wrapfigure}[12]{r}{0pt}
\includegraphics[scale=0.78]{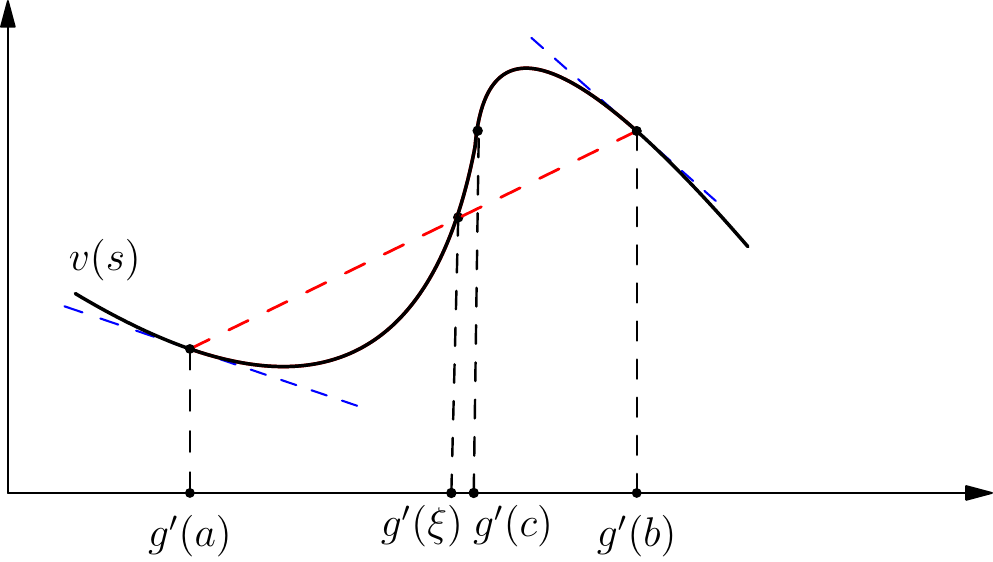}
\caption{Graph $v(s)$}
\label{fig:slopes}
\end{wrapfigure}

The next lemma  shows that the regardless of the choices of initial solution $(a_{1},b_{1})$ of (\ref{cupeq0}), the constructed function $a(s)$ by Lemma~\ref{functiona} is unique (i.e. it does not depend on the pair $(a_{1},b_{1})$).

\begin{Le}\label{unique}
Let pairs $(a_{1},b_{1})$, $(\tilde a_{1} , \tilde b_{1} )$ solve (\ref{cupeq0}), and let $a(s), \tilde a (s)$ be the corresponding functions obtained by Lemma~\ref{functiona}. Then $a(s) = \tilde a (s)$ on $[c, \min\{ b_{1}, \tilde b_{1} \}]$.
\end{Le}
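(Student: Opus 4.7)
The plan is to combine an open--closed argument based on the implicit function theorem with a local analysis of the cup equation near the degenerate point $(c,c)$. Setting $s^* = \min\{b_1, \tilde b_1\}$ and viewing~(\ref{cupeq0}) as $\Phi(a, s) = 0$, I would introduce
\[
E = \{s \in (c, s^*] : a(s) = \tilde a(s)\}
\]
and show $E = (c, s^*]$; combined with the common boundary value $a(c) = \tilde a(c) = c$, this gives the lemma.

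Closedness of $E$ in $(c, s^*]$ is immediate from continuity of $a$ and $\tilde a$. For openness, at any $s_0 \in E$ the point $(a(s_0), s_0)$ lies in $(a_0, c) \times (c, b_0)$, is a zero of $\Phi$, and satisfies $\partial_a \Phi \ne 0$ by~(\ref{differentiala}) established in the proof of Lemma~\ref{functiona}. The implicit function theorem then provides a unique $C^1$ branch of $\Phi(\cdot, s) = 0$ near $s_0$, with which both $a$ and $\tilde a$ must agree.

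The main obstacle is nonemptiness of $E$, because $(c, c)$ is singular: $\Phi$ has a repeated column on the diagonal $a = b$, and both $\partial_a \Phi$ and $\partial_b \Phi$ vanish at $(c, c)$, so the implicit function theorem fails there. My strategy is to show that for $s$ sufficiently close to $c$, the equation $\Phi(\cdot, s) = 0$ has a unique solution in $(a_0, c)$, forcing $a(s) = \tilde a(s)$. Writing $a = c - \alpha$, $s = c + \beta$ with $\alpha, \beta > 0$ small and Taylor expanding, using $\phi'(c) = 0$ where $\phi = f''/g''$ (this vanishes precisely because the torsion of $\gamma(s) = (s, g(s), f(s))$ changes sign at $c$), one obtains
\[
\Phi(a, s) = \frac{(g''(c))^2\, \phi''(c)}{24}\,(s - a)^4\,(a + s - 2c) + O\bigl((\alpha + \beta)^6\bigr).
\]
Since the torsion changes sign from $+$ to $-$, $\phi$ has a local maximum at $c$; in the generic case $\phi''(c) < 0$ the zero locus of $\Phi$ in a punctured neighborhood of $(c, c)$ consists of the diagonal $a = s$ (which lies outside the region $a < c < s$) and the unique smooth branch $a = 2c - s + O((s - c)^2)$, forcing $a(s) = \tilde a(s)$ for $s$ close to $c$. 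The borderline case $\phi''(c) = 0$ is handled by pushing the expansion to the first nonvanishing order.

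By the connectedness of $(c, s^*]$, the set $E$ being open, closed, and nonempty forces $E = (c, s^*]$, which together with $a(c) = \tilde a(c) = c$ completes the proof.
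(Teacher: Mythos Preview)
Your open--closed scheme and the identification of $(c,c)$ as the delicate point are both sound, and your leading-order expansion
\[
\Phi(a,s)\;\approx\;\frac{(g''(c))^2\,\phi''(c)}{24}\,(s-a)^4\,(a+s-2c)
\]
is correct. However, the lemma is stated under the standing hypothesis $\gamma=(s,g(s),f(s))\in C^3$, which gives only $\phi=f''/g''\in C^1$; the quantity $\phi''(c)$ need not exist, and the fifth-order remainder you write down requires still more derivatives. Your fallback for the degenerate case, ``push to the first nonvanishing order,'' tacitly assumes real-analyticity (or at least that the torsion vanishes to finite order at $c$), which is not part of the hypothesis either. So as written the argument does not prove the lemma in the stated generality, although it does cover the concrete boundary data~(\ref{gc})--(\ref{fc}) used in Section~\ref{construction}, where $f,g$ are real-analytic.

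The paper takes a different, geometric route that works under $C^3$. It also reduces (via implicit-function uniqueness) to finding a single point where $a$ and $\tilde a$ agree, but instead of a local expansion near $(c,c)$ it invokes the concave functions $\Bell,\tilde\Bell$ of Lemma~\ref{bellmancup} built from the two cup foliations. Assuming $\tilde a(s_2)>a(s_2)$ at the common right endpoint, concavity forces the chord of the larger cup to lie both below and above---hence on---the graph of $\tilde\Bell$; a tangent-plane argument then shows $\tilde\Bell$ is affine on a nontrivial triangle, so the boundary curve $(s,g(s),f(s))$ is planar on an interval, contradicting the strict sign of the torsion there. Your approach is more elementary and self-contained (it never touches the Bellman-function machinery), while the paper's argument is regularity-robust and exploits precisely the concavity that the rest of Section~\ref{geom} was set up to produce.
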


\begin{proof}
By the uniqueness result of the implicit function theorem we only need to show existence of $s_{1} \in (c, \min\{ b_{1}, \tilde b_{1} \})$ such that $a (s_{1}) = \tilde a (s_{1})$. Without loss of generality assume that $\tilde b_{1}  = b_{1}=s_{2}$. We can also assume that $\tilde a (s_{2}) > a (s_{2})$, because other cases can be solved in a similar way. 

Let $\Theta = \Theta_{\operatorname{cup}}([c,s_{2}],g)$ and $\tilde{\Theta} = \tilde{\Theta}_{\operatorname{cup}}([c,s_{2}],g)$ be the foliations corresponding to the functions $a(s)$ and $\tilde a (s)$. Let $\Bell$ and $\tilde \Bell$ be the functions corresponding to these foliations from Lemma~\ref{bellmancup}. We consider a chord $T$  in $\mathbb{R}^{3}$ joining the points $(a(s_{1}),g(a(s_{1})),f(a(s_{1})))$  and $(s_{1},g(s_{1}),f(s_{1}))$ (see Figure~\ref{fig:uniqueness}). We want to show that the chord $T$ belongs to the graph of $\tilde \Bell$.
Indeed,   concavity of $\tilde{\Bell}$ (see Lemma~\ref{bellmancup}) implies that the chord $T$ lies below the graph of  $\tilde{\Bell}(x_{1},x_{2})$, where $(x_{1},x_{2}) \in \Omega(\tilde{\Theta})$. Moreover, concavity of $\Bell$, $\Omega(\tilde{\Theta})\subset \Omega(\Theta)$ and the fact that the graph $\tilde \Bell$ consists of chords joining the points of the curve $(t,g(t),f(t))$ imply that  the graph $\Bell$ lies above the graph $\tilde{\Bell}$. In particular the chord $T$, belonging to the graph $\Bell$, lies above the graph $\tilde{\Bell}$. This can happen if and only if  $T$ belongs to the graph $\tilde{\Bell}$.
Now we show that if $s_{1}<s_{2}$, then the torsion of the curve $(s,g(s),f(s))$ is zero for $s \in [s_{1},s_{2}]$. Indeed, 
 let $\tilde T$ be a chord in $\mathbb{R}^{3}$ which joins the points $(a(s_{1}),g(a(s_{1})),f(a(s_{1})))$ and $(s_{2},g(s_{2}),f(s_{2}))$. We consider the tangent plane $L(x)$  to the graph  $\tilde \Bell$ at the point $(x_{1},x_{2}) = (a(s_{1}),g(a(s_{1})))$. This tangent plane must contain both chords $T$ and $\tilde T$, and it must be  tangent to the surface at these chords. Concavity of $\tilde \Bell$ implies that the tangent plane $L$ coincides with $\tilde \Bell$ at points belonging to the triangle, which is the convex hull of the points $(a(s_{1}), g(a(s_{1})))$, $(s_{1},g(s_{1}))$ and $(s_{2},g(s_{2}))$. Therefore, it is clear that  the tangent plane $L$ coincides with $\tilde \Bell$ on the segments $\ell \in \tilde{\Theta}$ with the endpoint at $(s,g(s))$ for $s \in [s_{1},s_{2}]$. Thus $L((s,g(s)))=\tilde \Bell((s,g(s)))$  for any $s \in [s_{1},s_{2}].$ This means that the torsion of the curve $(s,g(s),f(s))$ is zero  on $s \in [s_{1},s_{2}]$ which contradicts our assumption about the torsion. Therefore $s_{1}=s_{2}$. 
\end{proof}

\begin{Cor}\label{edin}
In the conditions of Lemma~\ref{existence},  for all $0<P<\min\{c-a_{0},b_{0}-c\}$ there exists a unique pair $(a_{1},b_{1})$  which solves (\ref{cupeq0}) such that  $b_{1}-a_{1}=P$.
\end{Cor}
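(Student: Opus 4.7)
The plan is to combine existence from Lemma~\ref{existence} with the matching property of Lemma~\ref{unique}, using that $b-a$ is a strictly monotone function along either branch $a(s)$.

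First, existence of \emph{some} pair $(a_{1},b_{1})$ with $b_{1}-a_{1}=P$ solving (\ref{cupeq0}) is exactly Lemma~\ref{existence}. For uniqueness, suppose $(a_{1},b_{1})$ and $(\tilde a_{1},\tilde b_{1})$ are two such pairs, and apply Lemma~\ref{functiona} to each to obtain functions $a(s)\in C^{1}((c,b_{1}])\cap C([c,b_{1}])$ and $\tilde a(s)\in C^{1}((c,\tilde b_{1}])\cap C([c,\tilde b_{1}])$ satisfying $a(b_{1})=a_{1}$, $\tilde a(\tilde b_{1})=\tilde a_{1}$, $a(c)=\tilde a(c)=c$, and $a'<0$, $\tilde a'<0$.

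Assume without loss of generality $b_{1}\leq \tilde b_{1}$. By Lemma~\ref{unique}, $a(s)=\tilde a(s)$ for every $s\in[c,b_{1}]$. Define the gap functions
\begin{equation*}
\varphi(s)\df s-a(s),\qquad \tilde\varphi(s)\df s-\tilde a(s).
\end{equation*}
Since $a'(s)<0$ and $\tilde a'(s)<0$, we have $\varphi'(s)=1-a'(s)>1$ and $\tilde\varphi'(s)>1$, so both $\varphi$ and $\tilde\varphi$ are strictly increasing on their respective domains, with $\varphi(c)=\tilde\varphi(c)=0$. Moreover $\varphi=\tilde\varphi$ on $[c,b_{1}]$.

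Evaluating at the endpoints gives $\varphi(b_{1})=b_{1}-a_{1}=P$ and $\tilde\varphi(\tilde b_{1})=\tilde b_{1}-\tilde a_{1}=P$. Using the agreement on $[c,b_{1}]$, $\tilde\varphi(b_{1})=\varphi(b_{1})=P=\tilde\varphi(\tilde b_{1})$, and strict monotonicity of $\tilde\varphi$ forces $b_{1}=\tilde b_{1}$, whence $a_{1}=a(b_{1})=\tilde a(\tilde b_{1})=\tilde a_{1}$. This gives the claimed uniqueness.

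The only nontrivial ingredient is Lemma~\ref{unique}, which is invoked as a black box; the rest is the monotonicity observation $\varphi'>1$, so I do not expect any genuine obstacle beyond being careful about which interval the comparison $a=\tilde a$ applies on (one must take $b_{1}\leq\tilde b_{1}$ WLOG so that $[c,b_{1}]$ is contained in the common domain of definition).
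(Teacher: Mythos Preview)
Your proof is correct and follows exactly the approach the paper intends: the corollary is stated without proof as an immediate consequence of Lemma~\ref{unique}, and your argument spells out precisely the implicit monotonicity step (that $s\mapsto s-a(s)$ is strictly increasing because $a'<0$) needed to pass from ``the functions $a(\cdot)$ agree'' to ``the endpoints agree.''
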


The above corollary implies that if the pairs $(a_{1},b_{1})$ and $(\tilde a_{1}, \tilde b_{1} )$ solve (\ref{cupeq0}), then $a_{1} \neq \tilde a _{1}$ and $b_{1} \neq \tilde b _{1}$, and one of the following conditions holds: $(a_{1},b_{1})\subset (\tilde{a}_{1}, \tilde{b}_{1})$, or $(\tilde{a}_{1},\tilde{b}_{1})\subset (a_{1}, b_{1})$.

\begin{Rem}\label{extenda}
The function $a(s)$ is defined on the right of the point $c$. We extend naturally its definition on the left  of the interval by $a(s) \df a^{-1}(s)$.
\end{Rem}

\section{Construction of the Bellman function}\label{construction}
\subsection{Reduction to the two dimensional case}\label{redu}
We are going to construct the Bellman function for the case $p<2$. The case $p=2$ is trivial, and the case $p>2$ was solved in~\cite{BJV}. 
From the definition of  $H$ it follows that 
\begin{align}\label{symmetry}
H(x_{1},x_{2},x_{3})=H(|x_{1}|,|x_{2}|,x_{3}) \quad  \text{for all} \quad  (x_{1},x_{2},x_{3}) \in \Omega.
\end{align}
Also note the homogeneity condition 
\begin{align}\label{homo}
H(\lambda x_{1},\lambda x_{2}, \lambda^{p} x_{3} )=\lambda^{p} H(x_{1},x_{2},x_{3}) \quad  \text{for all} \quad  \lambda \geq 0.
\end{align}
These two conditions (\ref{symmetry}), (\ref{homo}), which follow from the nature of the boundary data $(x^{2}+\tau^{2}y^{2})^{2/p}$, make the construction of $H$ easier. However, in order to construct the function $H$, this information is not necessary.
Further, we assume that $H$ is $C^{1}(\Omega)$ smooth. Then from the symmetry (\ref{symmetry}) it follows that 
\begin{align}\label{neiman0}
\frac{\partial H}{\partial x_{j}}=0  \quad \text{on} \quad x_{j}=0 \quad \text{for}\quad  j=1,2.   
\end{align}
For convenience, as in~\cite{BJV}, we rotate the system of coordinates $(x_{1},x_{2},x_{3})$. Namely, let 
\begin{align}\label{coorchange}
y_{1} \df \frac{x_{1}+x_{2}}{2},\quad y_{2} \df \frac{x_{2}-x_{1}}{2}, \quad  y_{3}\df x_{3}.
\end{align}
 We define 
\begin{align*}
N(y_{1},y_{2},y_{3}) \df H(y_{1}-y_{2},y_{1}+y_{2},y_{3}) \quad \text{on} \quad \Omega_{1},
\end{align*}
where $\Omega_{1} = \{(y_{1},y_{2},y_{3})\colon y_{3} \geq 0, \,|y_{1}-y_{2}|^{p} \leq y_{3} \}$. It is clear that for fixed $y_{1}$, the function $N$ is concave in variables $y_{2}$ and $y_{3}$; moreover, for fixed $y_{2}$ the function $N$ is concave with respect to the rest of variables. 
The symmetry (\ref{symmetry}) for $N$ turns into the following condition
\begin{align}\label{symmetrym}
N(y_{1},y_{2},y_{3})=N(y_{2},y_{1},y_{3})=N(-y_{1},-y_{2},y_{3}).
\end{align}
Thus it is sufficient to construct the function $N$ on the domain
\begin{align*}
\Omega_{2} \df \{ (y_{1},y_{2},y_{3})\colon y_{1} \geq 0, \; -y_{1} \leq y_{2} \leq y_{1}, \; (y_{1}-y_{2})^{p} \leq y_{3} \}.
\end{align*}
Condition (\ref{neiman0}) turns into
\begin{align}
\frac{\partial N}{\partial y_{1}} &= \frac{\partial N}{ \partial y_{2}} \quad \text{on the hyperplane} \quad y_{2}=y_{1},  \label{ne1}\\
\frac{\partial N}{\partial y_{1}} &= -\frac{\partial N}{ \partial y_{2}} \quad \text{on the hyperplane} \quad y_{2}=-y_{1}. \label{ne2}
\end{align}
The boundary condition (\ref{boundarycondition}) becomes
\begin{align}\label{bmcondition}
N(y_{1},y_{2},|y_{1}-y_{2}|^{p}) = ((y_{1}+y_{2})^{2}+\tau^{2} (y_{1}-y_{2})^{2})^{p/2}.
\end{align}
The homogeneity condition (\ref{homo}) implies that $N(\lambda y_{1},\lambda y_{2},\lambda^{p} y_{3}) = \lambda^{p} N(y_{1},y_{2},y_{3})$ for $\lambda  \geq 0$. We choose $\lambda  = 1/y_{1}$, and we obtain that 
\begin{align}\label{extend}
N(y_{1},y_{2},y_{3}) =y_{1}^{p} N\left(1,\frac{y_{2}}{y_{1}},\frac{y_{3}}{y_{1}^{p}}\right)
\end{align}  
Suppose we are able to construct the function $M(y_{2},y_{3}) \df N(1,y_{2},y_{3})$ on 
\begin{align*}
\Omega_{3} \df \{ (y_{2},y_{3}):\;  -1 \leq y_{2} \leq 1, (1-y_{2})^{p} \leq y_{3} \}
\end{align*}
with the following conditions: 
\begin{itemize}
\item[1.] $M$ is concave in $\Omega_{3}$ 
\item[2.] $M$ satisfies (\ref{bmcondition}) for $y_{1}=1$.
\item[3.] The extension of $M$ onto  $\Omega_{1}$ via formulas (\ref{extend}) and (\ref{symmetrym}) is a function with the properties of $N$ (see (\ref{ne1}), (\ref{ne2}), and concavity of $N$).
\item[4.] $M$ is minimal among those who satisfy the conditions 1,2,3. 
\end{itemize}
Then the extended function $M$ should be $N$.
 So we are going to construct  $M$ on $\Omega_{3}$.
 We denote
\begin{align}
&g(t) \df (1-t)^{p}, \quad t \in [-1,1], \label{gc}\\
&f(t) \df ((1+t)^{2} + \tau^{2} (1-t)^{2})^{p/2}, \quad t \in [-1,1]. \label{fc}
\end{align}
Then we have the boundary condition 
\begin{align}\label{bc}
M(t,g(t)) =f(t), \quad  t \in [-1,1].
\end{align} 

We differentiate the condition (\ref{extend}) with respect to $y_{1}$ at the point $(y_{1},y_{2},y_{3}) = (1,-1,y_{3})$ and we obtain that 
\begin{align*}
\frac{\partial N}{\partial y_{1}}(1,-1,y_{3}) = pN(1,-1,y_{3})+\frac{\partial N}{\partial y_{2}}(1,-1,y_{3})-py_{3}\frac{\partial N}{\partial y_{3}}, \quad y_{3} \geq 0.
\end{align*}
Now we use (\ref{ne2}), so we obtain another requirement for $M(y_{2},y_{3})$:
\begin{align}\label{req1}
0=pM(-1,y_{3}) + 2\frac{\partial M}{\partial y_{2}}(-1,y_{3})-py_{3}\frac{\partial M}{\partial y_{3}}(-1,y_{3}), \quad \text{for} \quad  y_{3} \geq 0.
\end{align}
Similarly, we differentiate  (\ref{extend}) with respect to $y_{1}$ at point $(y_{1},y_{2},y_{3}) = (1,1,y_{3})$ and use (\ref{ne1}), so we obtain
\begin{align}\label{req2}
0=pM(1,y_{3}) - 2\frac{\partial M}{\partial y_{2}}(1,y_{3})-py_{3}\frac{\partial M}{\partial y_{3}}(1,y_{3}), \quad \text{for} \quad  y_{3} \geq 0.
\end{align}
So in order to satisfy conditions (\ref{ne1}) and (\ref{ne2}), the requirements (\ref{req1}) and (\ref{req2}) are necessary. It is easy to see that these requirements are also sufficient in order to satisfy these conditions.

\begin{wrapfigure}[12]{r}{0pt}
\includegraphics[scale=0.78]{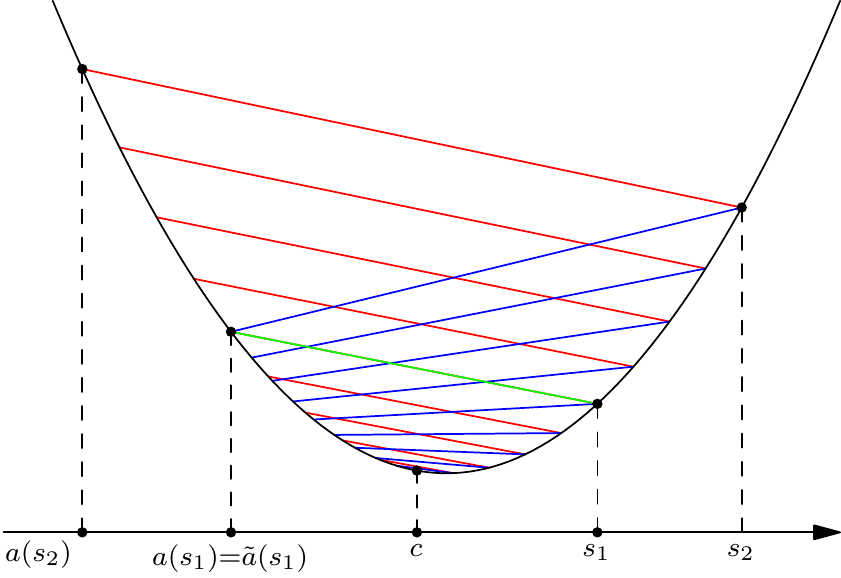}
\caption{Uniqueness of the cup}
\label{fig:uniqueness}
\end{wrapfigure}
The minimum between two concave functions with fixed boundary data is a concave function with the same boundary data. Note also that the conditions (\ref{req1}) and (\ref{req2}) still fulfilled  after taking the minimum. Thus it is quite reasonable to construct a candidate for  $M(y_{2},y_{3})$ as a minimal concave function on $\Omega_{3}$ with the boundary conditions (\ref{bc}), (\ref{req1}) and (\ref{req2}). We remind that we should also have the concavity of the extended function $N(y_{1},y_{2},y_{3})$ with respect to variables $y_{1},y_{3}$ for each fixed $y_{2}$. This condition can be verified after the construction of the function $M(y_{2},y_{3})$.

\subsection{Construction of a candidate for M}
We are going to construct a candidate $\s{B}$ for $M$. 
Firstly, we show that for $\tau >0$, the torsion $\tau_{\gamma}$ of the boundary curve $\gamma(t) \df (t,g(t),f(t))$ on $t \in (-1,1)$, where $f,g$ are defined by (\ref{gc}) and (\ref{fc}), changes sign once from + to $-$. We call this point the root of a cup. We construct the cup around this point. 
Note that $g'<0, g''>0$ on $[-1,1).$ Therefore  
\begin{align*}
\sign \tau_{\gamma} = \sign\left(f''' - \frac{g'''}{g''}f'' \right)=\sign \left( f''' - \frac{2-p}{1-t}f''\right)=
\sign(v(t)),
\end{align*}
where 
\begin{align*}
&v(t)\df -(1+\tau^{2})^{2}(p-1)t^{3}+(1+\tau^{2})(3\tau^{2}+\tau^{2}p+3-3p)t^{2}+\\
&(2\tau^{2}p-9\tau^{4}+\tau^{4}p+3-3p-6\tau^{2})t-p+5\tau^{4}+2\tau^{2}p-\tau^{4}p-10\tau^{2}+1.
\end{align*}
Note that $v(-1) = 16\tau^{4}>0$ and $v(1)=-8((p-1)+\tau^{2})<0$. So the function $v(t)$ changes sign from + to $-$ at least once. Now, we show that $v(t)$ has only one root. For $\tau^{2} <  \frac{3(p-1)}{3-p}$, note that the linear function $v''(t)$ is nonnegative i.e.  $v''(-1) =8\tau^{2}p(1+\tau^{2})>0$,  $v''(1) = -4(1+\tau^{2})(\tau^{2}p-3\tau^{2}+3p-3)\geq 0$. Therefore,  the convexity of $v(t)$ implies the uniqueness of the root $v(t)$ on $[-1,1]$.

 Suppose $\tau^{2} <  \frac{3(p-1)}{3-p}$;  we will show that $v' \leq 0$ on $[-1,1]$. Indeed,  the discriminant of the quadratic function $v'(x)$ has the expression 
\begin{align*}
D = 16\tau^{2}(\tau^{2}+1)^{2}((3-p)^{2}\tau^{2}-9(p-1)),
\end{align*}
 which is negative for $0<\tau^{2} <\frac{3(p-1)}{3-p}$. Moreover, $v'(-1) = -4\tau^{2}(\tau^{2}p+3\tau^{2}+3)<0$. Thus we obtain that $v'$ is negative. 
 
 We denote the root of $v$ by $c$.
 It is an appropriate time to make the following remark.
 \begin{Rem}\label{poryadok}
 Note that $v(-1+2/p)<0.$ Indeed, 
 \begin{align*}
 v(-1+2/p) = \frac{(3p-2)(p^{2}-2p-4)\tau^{4}+(16+5p^{3}-8p^{2}-16p)\tau^{2}+8(1-p)}{p^{3}},
 \end{align*}
 which is negative because coefficients of $\tau^{4}, \tau^{2}, \tau^{0}$ are negative. Therefore, this inequality implies that $c < -1+2/p$. 
 \end{Rem}

  Consider  $a=-1$ and $b=1$;   the left side of  (\ref{cupeq0}) takes the positive value $-2^{2p-1}p(1-p)$. However, if we consider  $a=-1$ and $b=c$, then the proof of Lemma~\ref{existence} (see (\ref{uroven})) implies that the left side of (\ref{cupeq0}) is negative. Therefore, there exists a unique $s_{0} \in (c,1)$ such that the pair $(-1,s_{0})$ solves (\ref{cupeq0}). Uniqueness follows from Corollary~\ref{edin}. The equation (\ref{cupeq0}) for the pair $(-1,s_{0})$ is equivalent to the equation $u\left( \frac{1+s_{0}}{1-s_{0}}\right)=0,$ where 
 \begin{align}\label{lkonec}
u(z) \df \tau^{p}(p-1)\left(\tau^{2}+z^{2}\right)^{(2-p)/2}-\tau^{2}(p-1)+(1+z)^{2-p}-z(2-p)-1.
\end{align}
Lemma~\ref{functiona} gives the function $a(s),$ and Lemma~\ref{bellmancup} gives the concave function $\Bell(y_{2},y_{3})$ for $s_{1}=c$ with the foliation $\Theta_{\operatorname{cup}}((c,s_{0}],g)$ in the domain $ \Omega(\Theta_{\operatorname{cup}}((c,s_{0}],g))$.
 
The above explanation implies the following corollary. 
\begin{Cor}\label{ravnosilno}
Pick any point $\tilde{y}_{2} \in (-1,1).$ The inequalities $s_{0} < \tilde{y}_{2}$, $s_{0}=\tilde{y}_{2}$ and $\tilde{y}_{2}>s_{0}$ are equivalent to the following inequalities respectively: $u\left(\frac{1+\tilde{y}_{2}}{1-\tilde{y}_{2}}\right)<0$, $u\left(\frac{1+\tilde{y}_{2}}{1-\tilde{y}_{2}}\right)=0$ and $u\left(\frac{1+\tilde{y}_{2}}{1-\tilde{y}_{2}}\right)>0$.
\end{Cor}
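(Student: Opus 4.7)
The plan is to analyze $u(z)$ directly on $(0, \infty)$ and to exploit the equivalence between the equation $u(z)=0$ and the cup equation (\ref{cupeq0}) at $(a,b) = (-1, \tilde y_2)$, which the paper has already established at the special value $\tilde y_2 = s_0$.

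The first step is to upgrade that equivalence to a functional identity valid for every $\tilde y_2 \in (-1, 1)$: substituting $g(t) = (1-t)^p$ and $f(t) = ((1+t)^2 + \tau^2(1-t)^2)^{p/2}$ into the $3 \times 3$ determinant on the left-hand side of (\ref{cupeq0}), and using $1 - \tilde y_2 = 2/(z+1)$, $1 + \tilde y_2 = 2z/(z+1)$ with $z = (1+\tilde y_2)/(1-\tilde y_2)$, a direct computation expresses that determinant as $\rho(\tilde y_2)\, u(z)$ for some factor $\rho$ of definite sign on $(-1, 1)$. In particular, the pair $(-1, \tilde y_2)$ solves (\ref{cupeq0}) if and only if $u\bigl(\tfrac{1 + \tilde y_2}{1 - \tilde y_2}\bigr) = 0$.

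The second step is to locate the zero set of $u$ on $(0, \infty)$. By the functional identity, $u(z_0) = 0$ for $z_0 := (1+s_0)/(1-s_0)$; uniqueness of $s_0$ as the only $b \in (-1, 1)$ with $(-1, b)$ solving (\ref{cupeq0}) then forces $z_0$ to be the only zero of $u$ on $(0, \infty)$. That uniqueness is a consequence of Lemma~\ref{unique}: a distinct second solution $\tilde s_0 > s_0$ would produce a strictly decreasing cup function $\tilde a$ with $\tilde a(\tilde s_0) = -1$, which by Lemma~\ref{unique} would also satisfy $\tilde a(s_0) = a(s_0) = -1$, contradicting the strict monotonicity of $\tilde a$.

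The third step is an asymptotic analysis of $u$ near the endpoints of $(0, \infty)$. A direct expansion from (\ref{lkonec}) gives $u(0) = u'(0) = u''(0) = 0$ and $u'''(0) = p(p-1)(2-p) > 0$ for $1 < p < 2$, so $u(z) > 0$ for small $z > 0$. As $z \to \infty$ the leading contributions are $(1 + \tau^p(p-1))\, z^{2-p} - (2-p)\, z$; since $2 - p < 1$, the linear term dominates and $u(z) \to -\infty$. Combined with the uniqueness from the second step, this yields $u > 0$ on $(0, z_0)$ and $u < 0$ on $(z_0, \infty)$.

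Since $\tilde y_2 \mapsto z = (1+\tilde y_2)/(1-\tilde y_2)$ is a strictly increasing bijection $(-1, 1) \to (0, \infty)$ sending $s_0$ to $z_0$, the three equivalences of the corollary follow. The main obstacle is the algebraic verification in the first step: expanding the $3 \times 3$ determinant, keeping track of powers of $(z+1)$, of $\tau$, and of the irrational factor $(z^2 + \tau^2)^{(p-2)/2}$, then matching against the closed form (\ref{lkonec}) of $u$. The computation is mechanical but lengthy, and natural sanity checks include $u(0) = 0$ together with the higher-order vanishing of the determinant at $\tilde y_2 = -1$, and the matching of the dominant behavior as $\tilde y_2 \to 1$.
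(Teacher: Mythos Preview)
Your argument is correct and essentially parallels the paper's, but the paper's ``proof'' is literally the single sentence ``The above explanation implies the following corollary.'' That explanation consists of (a) the algebraic equivalence between the cup equation \eqref{cupeq0} at $(a,b)=(-1,\tilde y_2)$ and the equation $u\bigl(\tfrac{1+\tilde y_2}{1-\tilde y_2}\bigr)=0$, and (b) the sign information on the determinant $\Phi(-1,b)$ already obtained at $b=c$ and $b=1$ together with the uniqueness of $s_0$ coming from Corollary~\ref{edin}. You reproduce (a) as your first step and then, instead of quoting (b), you analyze $u$ directly: the Taylor expansion $u(0)=u'(0)=u''(0)=0$, $u'''(0)=p(p-1)(2-p)>0$ and the asymptotics $u(z)\sim -(2-p)z\to-\infty$ pin down the signs at the two ends of $(0,\infty)$. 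This direct route is a pleasant alternative, since it bypasses the somewhat delicate sign bookkeeping for $\mathcal M(a,b)$ and $D$ in the paper's discussion.

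One point to tighten in your second step: your uniqueness argument via Lemma~\ref{unique} only treats a hypothetical second root $\tilde s_0$ with $\tilde s_0>c$, because Lemma~\ref{functiona} needs $b_1\in I^{+}=(c,b_0)$ to manufacture the cup function $\tilde a$. A second root $\tilde s_0\in(-1,c]$ is not excluded by that lemma. The easiest patch is the one implicit in the paper: on $(-1,c]$ the curve $v(s)=(g'(s),f'(s))$ is strictly convex, so the secant--slope function $W_{-1}(z)=\tfrac{f'(z)-f'(-1)}{g'(z)-g'(-1)}$ is strictly increasing there, and the representation $\Phi(-1,b)=\mathcal M(-1,b)\bigl(W_{-1}(\xi)-W_{-1}(b)\bigr)$ from the proof of Lemma~\ref{existence} forces $\Phi(-1,b)\neq0$ for $b\in(-1,c]$. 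With that addition your three steps give a complete proof.
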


\begin{wrapfigure}[14]{r}{0pt}
\includegraphics[scale=0.78]{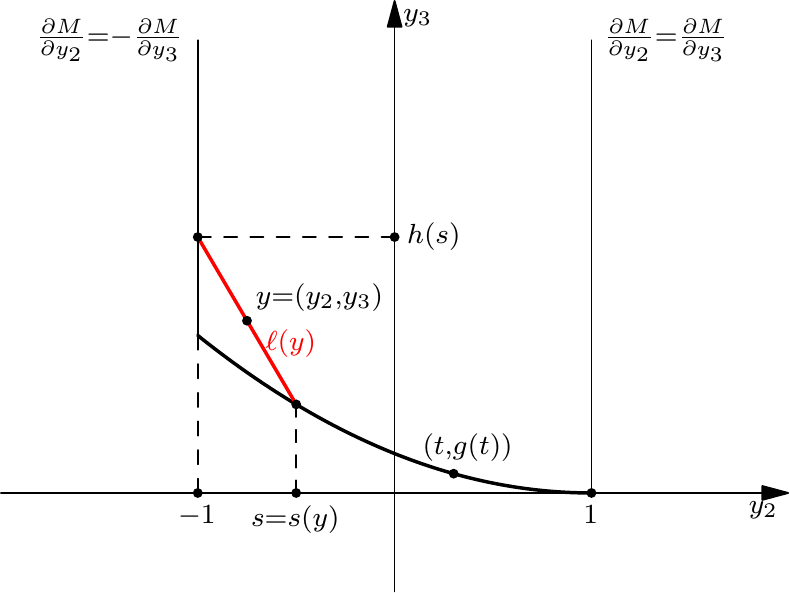}
\caption{Segment $\ell(y)$}
\label{fig:left}
\end{wrapfigure}
Now we are going to extend $C^{1}$ smoothly the function $\Bell$  on the upper part of the cup. Recall that we are looking for a minimal concave function. If we construct a function with a foliation $\Theta([s_{0}, \tilde y _{2}],g)$ where $\tilde y _{2} \in (s_{0},1)$ then the best thing we can do according to Lemma~\ref{gluing} and Lemma~\ref{comparison} is to minimize $\sin (\theta_{\operatorname{cup}}(s_{0}) -  \theta (s_{0}))$ where $\theta_{\operatorname{cup}}(s)$ is an argument function of $\Theta_{\operatorname{cup}}((c,s_{0}],g)$  and 
$\theta(s)$ is an argument function of $\Theta([s_{0}, \tilde y _{2}],g)$. In other words we need to choose segments from $\Theta([s_{0}, \tilde y _{2}],g)$ close enough to the segments of $\Theta_{\operatorname{cup}}((c,s_{0}],g)$. 
 
Thus,  we are going to try to construct the set of segments $\Theta([s_{0},\tilde y _{2}])$ so that  they start from $(s,g(s),f(s))$, $s\in [s_{0},\tilde y _{2}],$ and they go to the boundary $y_{2}=-1$ of $\Omega_{3}$.

 We explain how the conditions (\ref{req1}) and (\ref{req2})  allow us to construct such type of foliation $\Theta([s_{0},\tilde y _{2}],g)$ in a unique way. Let $\ell(y)$ be the segment with the endpoints $(s,g(s))$ where $s \in (s_{0},\tilde y _{2})$ and $(-1,h(s))$ (see Figure~\ref{fig:left}).

Let $t(s) = (t_{1}(s),t_{2}(s))=\nabla \Bell(y) $ where $s=s(y)$ is the corresponding gradient function. 
Then (\ref{req1}) takes the form
\begin{align}\label{neim1}
0=p\Bell(-1,h(s))+2t_{1}(s)-ph(s)t_{2}(s).
\end{align}
We differentiate this expression with respect to $s$, and we obtain  
\begin{align}\label{neimshtrix}
2t'_{1}(s)-ph(s)t'_{2}(s)=0.
\end{align}
Then according to (\ref{extremal}) we find the function $\tan 
\theta(s)$, and, hence, we find the quantity $h(s)$
\begin{align*}
\tan \theta(s) = -\frac{ph(s)}{2}\quad  \Leftrightarrow \quad \frac{h(s)-g(s)}{s+1}= \frac{ph(s)}{2}.
\end{align*}
Therefore, 
\begin{align}\label{aes}
h(s) = \frac{2g(s)}{p}\left(\frac{1}{y_{p}-s}\right) \quad  \text{where}\quad  y_{p} \df -1+\frac{2}{p}.
\end{align}
We see that the function $h(s)$ is well defined, it increases,  and it is differentiable on $-1 \leq s < y_{p}$. So we conclude that  if $s_{0} < y_{p}$ then we are able to construct the set of  segments $\Theta([s_{0},y_{p}),g)$   that pass through the points $(s,g(s))$ , where  $s \in [s_{0},y_{p})$ and through the boundary $y_{2}=-1$ (see Figure~\ref{fig:cupandchord}).

\begin{wrapfigure}[16]{r}{0pt}
\includegraphics[scale=0.9]{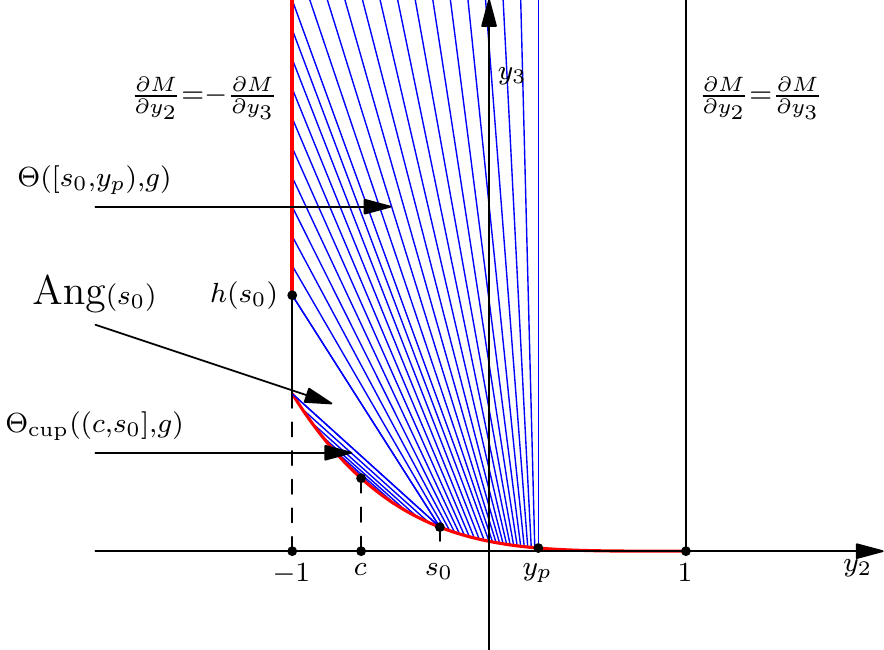}
\caption{Foliations $\Theta_{\operatorname{cup}}((c,s_{0}],g)$ and $\Theta([s_{0},y_{p}),g)$ }
\label{fig:cupandchord}
\end{wrapfigure}

 It is easy to check that $\Theta([s_{0},y_{p}),g)$ is a foliation. So choosing the value $t_{2}(s_{0})$ of $\Bell$ on $\Omega(\Theta([s_{0},y_{p}),g))$ according to  Lemma~\ref{gluing}, then  by Corollary~\ref{rassh} we have constructed the concave function $\Bell$ in the domain $\Omega(\Theta_{\operatorname{cup}}((c,s_{0}],g))\cup \operatorname{Ang}(s_{0})\cup \Omega(\Theta([s_{0},y_{p}],g))$. 

 It is clear that the foliation $\Theta([s_{0},y_{p}),g)$ exists as long as $s_{0}<y_{p}$.  
  Note that $\frac{1+y_{p}}{1-y_{p}}=\frac{1}{p-1}$. Therefore, Corollary~\ref{ravnosilno} implies the following remark.
 \begin{Rem}\label{daxmareba}
The  inequalities $s_{0}<y_{p},$ $s_{0}=y_{p}$ and $s_{0}>y_{p}$ are equivalent to the following inequalities respectively: $u\left(\frac{1}{p-1}\right) <0$, $u\left(\frac{1}{p-1}\right) =0$ and $u\left(\frac{1}{p-1}\right) >0$.
 \end{Rem}
   
\begin{wrapfigure}[16]{r}{0pt}
\includegraphics[scale=0.85]{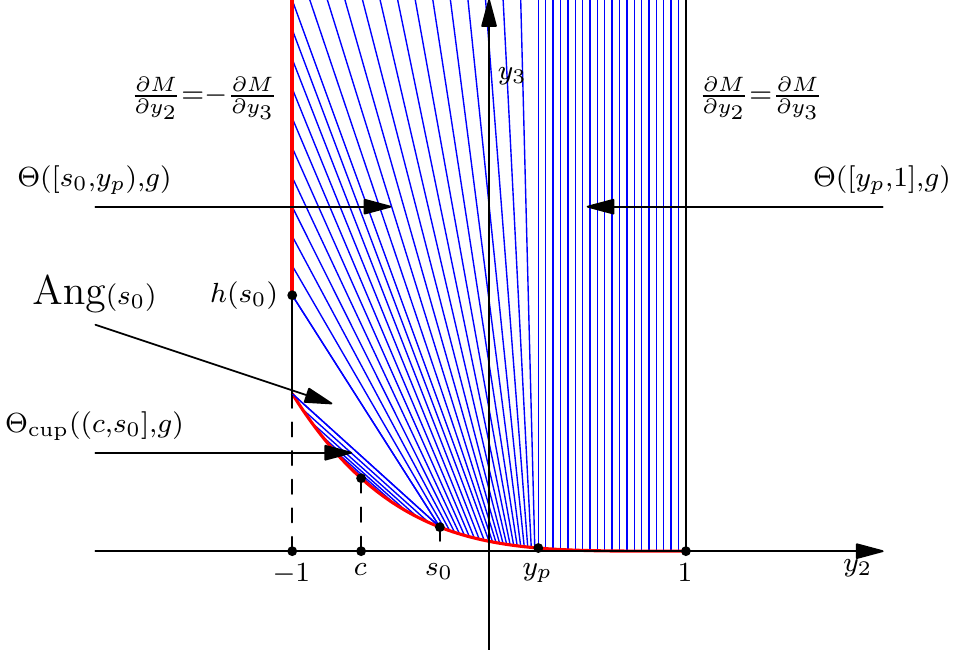}
\caption{Case $u\left(\frac{1}{p-1}\right)< 0$ }
\label{fig:ccv}
\end{wrapfigure}
  At the point $y_{p}$ the segments from $\Theta([s_{0},y_{p}),g)$ become vertical. After the point $(y_{p},g(y_{p}))$ we should consider vertical segments $\Theta([y_{p},1],g)$ (see Figure~\ref{fig:ccv}), because by Lemma~\ref{comparison} this corresponds  to the minimal function.  Surely $\Theta([y_{p},1],g)$ is the foliation. Again, choosing the value $t_{2}(y_{p})$ of $\Bell$ on $\Omega(\Theta([y_{p},1],g))$ according to  Lemma~\ref{gluing}, then  by Corollary~\ref{rassh} we have constructed the concave function $\Bell$  on $\Omega_{3}$. 
Note that if $s_{0}\geq y_{p}$ (which corresponds to the inequality $u\left(\frac{1}{p-1}\right) >0$) then we do not have the foliation $\Theta([s_{0},y_{p}),g)$. In this  case we consider only vertical segments $\Theta([s_{0},1],g)$ (see Figure~\ref{fig:vvv}),
and again choosing the value $t_{2}(s_{0})$ of $\Bell$ on $\Omega(\Theta([s_{0},1],g))$ according to  Lemma~\ref{gluing} then  by Corollary~\ref{rassh} we construct a  concave function $\Bell$  on $\Omega_{3}$. We believe that $\Bell = M$.

 We still have to check the requirements (\ref{req1}) and (\ref{req2}).  The crucial role is played by symmetry of the boundary data of $N$. Further, the given proofs work for both of the cases $y_{p}<s_{0}$ and $y_{p} \geq s_{0}$. Therefore, we do not consider them separately. 
 
  The requirement (\ref{req2}) follows immediately. Indeed, the condition (\ref{bellmanfunction}) at the point $y=(1,y_{3})$ (note that in (\ref{bellmanfunction}) instead of $x=(x_{1},x_{2})$ we consider $y=(y_{2},y_{3})$) implies that $\Bell(1,y_{3}) = f(1)+t_{2}(1)(y_{3}-g(1))$.  Therefore, the requirement (\ref{req2}) takes the form $0=pf(1)-2t_{1}(1)$. Using (\ref{bdcondition}), we obtain that $t_{1}(1)=f'(1)$. Therefore, we see that $pf(1)-2t_{1}(1) = pf(1)-2f'(1) = 0$.

Now, we are going to obtain the requirement (\ref{req1}) which is the same as (\ref{neim1}). The quantities $t_{1},t_{2}$ of $\Bell$ with the foliation $\Theta([s_{0},y_{p}),g)$ satisfy the condition (\ref{neimshtrix}) which was obtained by differentiation of (\ref{neim1}). So we only need to check the condition (\ref{neim1}) at the initial point $s=s_{0}$. If we substitute the expression of $\Bell$ from (\ref{bellmanfunction}) into (\ref{neim1}), then (\ref{neim1}) turns into the following equivalent condition: 
\begin{align}\label{neim3}
t_{1}(s)(s-y_{p})+t_{2}(s)g(s)=f(s).
\end{align}

Note that (\ref{bdcondition}) allows us to rewrite (\ref{neim3}) into the  equivalent condition 
\begin{align}\label{fromsurf1}
t_{2}(s) = \frac{f(s)-(s-y_{p})f'(s)}{g(s)-(s-y_{p})g'(s)} .
\end{align} 
And as it was mentioned above we only need to check condition (\ref{fromsurf1}) at the point $s=s_{0}$, i.e.
\begin{align}\label{fromsurf}
t_{2}(s_{0}) = \frac{f(s_{0})-(s_{0}-y_{p})f'(s_{0})}{g(s_{0})-(s_{0}-y_{p})g'(s_{0})} .
\end{align}

On the other hand, if we differentiate the boundary condition $\Bell(s,g(s))=f(s)$ at the points $s=s_{0}, -1,$ then we obtain 
\begin{align*}
t_{1}(s_{0})+t_{2}(s_{0})g'(-1)&=f'(-1),\\
t_{1}(s_{0})+t_{2}(s_{0})g'(s_{0})&=f'(s_{0}).\\
\end{align*}
Thus we can find the  value of $t_{2}(s_{0})$: 
\begin{align}\label{fromcup}
t_{2}(s_{0}) = \frac{f'(-1)-f'(s_{0})}{g'(-1)-g'(s_{0})}.
\end{align}
So these two values (\ref{fromcup}) and (\ref{fromsurf}) must coincide. In other words we need to show 
\begin{align}\label{coincide0}
\frac{f(s_{0})-(s_{0}-y_{p})f'(s_{0})}{g(s_{0})-(s_{0}-y_{p})g'(s_{0})}  =  \frac{f'(-1)-f'(s_{0})}{g'(-1)-g'(s_{0})}.
\end{align}
It will be convenient for us to work with the following notations for  the rest of the current subsection. We denote $g(-1)=g_{-}$, $g'(-1) = g'_{-}$, $f(-1)=f_{-}$, $f'(-1)=f'_{-}$
$g(s_{0})=g, g'(s_{0})=g', f(s_{0}) =f, f'(s_{0})=f'$. The condition (\ref{coincide0}) is equivalent to 
\begin{align}\label{odno}
s_{0} &= \frac{fg'_{-}+f'g-fg'-gf'_{-}}{f'g'_{-}-g'f'_{-}}+y_{p} = \\
  &=\left( \frac{fg'_{-}+f'g-fg'-gf'_{-}}{f'g'_{-}-g'f'_{-}} - 1 \right) +\frac{2}{p}. \nonumber
\end{align}
  
On the other hand, from (\ref{cupeq0}) for the pair $(-1,s_{0})$ we obtain that 
\begin{align*}
s_{0} = \left(\frac{fg'_{-}+f'g-fg'-gf'_{-}}{f'g'_{-}-g'f'_{-}}-1\right)+\frac{f'g_{-}+g'_{-}f_{-}-g'f_{-}-f'_{-}g_{-}}{g'f'_{-}-f'g'_{-}}.
\end{align*}

So, from (\ref{odno}) we see that it suffices to show that 

\begin{align*}
\frac{f'g_{-}+g'_{-}f_{-}-g'f_{-}-f'_{-}g_{-}}{g'f'_{-}-f'g'_{-}}=\frac{2}{p}. 
\end{align*}
We note that $g'_{-} = -(p/2)g_{-}, f'_{-} =-(p/2)f_{-}$, hence  $g'_{-}f_{-} = f'_{-}g_{-}$. Therefore, we have
\begin{align*}
\frac{f'g_{-}+g'_{-}f_{-}-g'f_{-}-f'_{-}g_{-}}{g'f'_{-}-f'g'_{-}} = \frac{f'g_{-}-g'f_{-}}{g'f'_{-}-f'g'_{-}} = \frac{2}{p}.
\end{align*}

\subsection{Concavity in another direction}

We are going to check the concavity of the extended function $N$ via $\Bell$ in another direction. It is worth mentioning that the both of the cases $y_{p}<s_{0}$, $y_{p}\geq s_{0}$ do not play any role in the following computations, therefore we consider them together. We define a candidate for $N$ as 
\begin{align}\label{extendab}
N(y_{1},y_{2},y_{3}) \df y_{1}^{p} \Bell(1,y_{2}/y_{1}, y_{3}/y_{1}^{p})\quad \text{for} \quad \left( \frac{y_{2}}{y_{1}}, \frac{y_{3}}{y_{1}^{p}}\right) \in \Omega_{3},
\end{align}
and we extend $N$ to the $\Omega_{1}$ by (\ref{symmetrym}). Then, as it was already discussed, $N \in C^{1}(\Omega_{1})$. 
 We need the following technical lemma:
\begin{Le}\label{anotherdirection}
\begin{align*}
N''_{y_{1}y_{1}}N''_{y_{3}y_{3}}-(N''_{y_{1}y_{3}})^{2} = -t'_{2}s'_{y_{3}}p(p-1)y_{1}^{p-2}\left(st_{1}+gt_{2}-f + \frac{y_{2}}{y_{1}}t_{1}\cdot \left(\frac{2}{p}-1\right) \right)
\end{align*}

where $s = s\left(\frac{y_{2}}{y_{1}},\frac{y_{3}}{y_{1}^{p}}\right)$
and 
$\left( \frac{y_{2}}{y_{1}}, \frac{y_{3}}{y_{1}^{p}}\right) \in \operatorname{int}(\Omega_{3})\setminus \operatorname{Ang}(s_{0})$.
\end{Le}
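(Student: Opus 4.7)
The plan is a direct Hessian computation organized around two structural identities of the foliation: (i) the boundary-gradient relation $t_{1}+g'(s)t_{2}=f'(s)$ from (\ref{bdcondition}), and (ii) the extremality $t_{1}'\cos\theta+t_{2}'\sin\theta=0$ from (\ref{extremal}). Together with Lemma~\ref{chervyak}, which gives $s'_{\xi_{2}}=-\tan\theta\cdot s'_{\xi_{3}}$ (writing $\xi_{2}:=y_{2}/y_{1}$, $\xi_{3}:=y_{3}/y_{1}^{p}$), these two identities will collapse the Hessian minor of $N$ to the claimed one-line expression.

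First I would use the developable-surface form $\Bell(\xi_{2},\xi_{3})=f(s)+t_{1}(\xi_{2}-s)+t_{2}(\xi_{3}-g(s))$ and differentiate $N=y_{1}^{p}\Bell$ once. A short computation using (i) collapses $\partial_{y_{1}}N$ into the clean shape
\begin{align*}
\partial_{y_{3}} N=t_{2}(s),\qquad \partial_{y_{1}} N=p y_{1}^{p-1}F+(p-1)y_{1}^{p-2}y_{2}\,t_{1},\qquad F:=f(s)-s t_{1}-g(s)t_{2}.
\end{align*}
Because $t_{1},t_{2}$ depend on $(y_{1},y_{2},y_{3})$ only through $s$, one has $\partial_{y_{k}}t_{i}=t_{i}'\,s'_{y_{k}}$; identity (i) also gives $\partial_{y_{k}}F=-(s t_{1}'+g t_{2}')\,s'_{y_{k}}$. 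Differentiating a second time then yields
\begin{align*}
&N''_{y_{3}y_{3}}=t_{2}'\,s'_{y_{3}},\\
&N''_{y_{1}y_{3}}=s'_{y_{3}}\bigl[-p y_{1}^{p-1}(s t_{1}'+g t_{2}')+(p-1)y_{1}^{p-2}y_{2}\,t_{1}'\bigr],\\
&N''_{y_{1}y_{1}}=p(p-1)y_{1}^{p-2}F+(p-1)(p-2)y_{1}^{p-3}y_{2}\,t_{1}+\frac{s'_{y_{1}}}{s'_{y_{3}}}\,N''_{y_{1}y_{3}}.
\end{align*}

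Forming the minor therefore gives
\begin{align*}
N''_{y_{1}y_{1}}N''_{y_{3}y_{3}}-(N''_{y_{1}y_{3}})^{2}=t_{2}'s'_{y_{3}}\bigl[p(p-1)y_{1}^{p-2}F+(p-1)(p-2)y_{1}^{p-3}y_{2}t_{1}\bigr]+N''_{y_{1}y_{3}}\bigl(t_{2}'s'_{y_{1}}-N''_{y_{1}y_{3}}\bigr),
\end{align*}
and the key step is to show the second summand vanishes. Expanding $s'_{y_{1}}=-(y_{2}/y_{1}^{2})s'_{\xi_{2}}-(py_{3}/y_{1}^{p+1})s'_{\xi_{3}}$ and invoking (ii) in the form $t_{2}'\tan\theta=-t_{1}'$ together with $s'_{\xi_{2}}=-\tan\theta\cdot s'_{\xi_{3}}$ to replace $t_{2}'s'_{\xi_{2}}$ by $t_{1}'s'_{\xi_{3}}$, one obtains after grouping
\begin{align*}
t_{2}'s'_{y_{1}}-N''_{y_{1}y_{3}}=-p y_{1}^{p-1}s'_{y_{3}}\bigl[(\xi_{2}-s)t_{1}'+(\xi_{3}-g)t_{2}'\bigr],
\end{align*}
which is zero because the segment vector $(\xi_{2}-s,\xi_{3}-g)\parallel(\cos\theta,\sin\theta)$ is orthogonal to $(t_{1}',t_{2}')$ by (ii). After this cancellation, factoring $p(p-1)y_{1}^{p-2}$ out of the surviving bracket and using $F=-(st_{1}+gt_{2}-f)$ together with $(p-2)/p=-(2/p-1)$ delivers exactly the claimed identity. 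The main obstacle is spotting the cancellation $t_{2}'s'_{y_{1}}=N''_{y_{1}y_{3}}$; once recognized as a manifestation of the rank-one structure of the Hessian of $\Bell$ along the foliation, everything else is bookkeeping.
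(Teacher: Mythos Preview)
Your argument is correct and follows essentially the same route as the paper: compute $\partial_{y_{1}}N$ in the form $py_{1}^{p-1}F+(p-1)y_{1}^{p-2}y_{2}t_{1}$, take second derivatives, and observe that the Hessian minor collapses once the cross-term $t_{2}'s'_{y_{1}}-N''_{y_{1}y_{3}}$ vanishes. The only organizational difference is in that vanishing step: the paper gets it in one line by noting $\partial_{y_{3}}N=t_{2}(s)$, hence $N''_{y_{3}y_{1}}=t_{2}'s'_{y_{1}}$, and then invoking equality of mixed partials $N''_{y_{1}y_{3}}=N''_{y_{3}y_{1}}$ (valid precisely on $\operatorname{int}(\Omega_{3})\setminus\operatorname{Ang}(s_{0})$); you instead verify the identity by direct computation through the orthogonality $(\xi_{2}-s,\xi_{3}-g)\perp(t_{1}',t_{2}')$. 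Your detour is a bit longer but makes explicit the geometric reason (the rank-one Hessian of $\Bell$ along the foliation) behind what the paper treats as a Clairaut shortcut.
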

 As it was mentioned in Remark~\ref{razryv}, the gradient function $t(s)$ is not necessarily differentiable at point $s_{0}$, this is the reason of the requirement $\left( \frac{y_{2}}{y_{1}}, \frac{y_{3}}{y_{1}^{p}}\right) \in  \operatorname{int}(\Omega_{3}) \setminus\operatorname{Ang}(s_{0})$ in the lemma. However, from the proof of the lemma, the reader can easily see that $N''_{y_{1}y_{1}}N''_{y_{3}y_{3}}-(N''_{y_{1}y_{3}})^{2}=0$ whenever the points $\left(\frac{y_{2}}{y_{1}}, \frac{y_{3}}{y_{1}^{p}}\right)$ belong to the interior of the domain  $\operatorname{Ang}(s_{0})$.
\begin{proof}
Definition of the candidate $N$ (see (\ref{extendab})) implies  $N''_{y_{3}y_{3}} = t'_{2}(s) s'_{y_{3}}$, $N''_{y_{3}y_{1}}=t'_{2} s'_{y_{1}}$, 

\begin{align}\label{der1}
N'_{y_{1}} = y^{p-1}_{1}\left(p\Bell \left(\frac{y_{2}}{y_{1}},\frac{y_{3}}{y_{1}^{p}}\right) - t_{1}\frac{y_{2}}{y_{1}} -pt_{2}\frac{y_{3}}{y^{p}_{1}}\right).
\end{align}
Condition (\ref{bellmanfunction}) implies

\begin{align*}
\Bell\left(\frac{y_{2}}{y_{1}},\frac{y_{3}}{y_{1}^{p}}\right)=
f(s)  + t_{1}\cdot \left( \frac{y_{2}}{y_{1}} - s\right) + t_{2} \cdot \left(\frac{y_{3}}{y_{1}^{p}} - g(s)  \right).
\end{align*}
We substitute this expression for $\Bell \left(\frac{y_{2}}{y_{1}},\frac{y_{3}}{y_{1}^{p}}\right)$ into (\ref{der1}), and  we obtain:

\begin{align}\label{erti}
N'_{y_{1}} = y^{p-1}_{1}\left( pf + \frac{y_{2}}{y_{1}}t_{1}(p-1) -pst_{1} - pgt_{2}\right).
\end{align}

Condition $\left( \frac{y_{2}}{y_{1}}, \frac{y_{3}}{y_{1}^{p}}\right) \in \operatorname{int}(\Omega_{3})\setminus \operatorname{Ang}(s_{0})$ implies the equality  $N''_{y_{1}y_{3}} = N''_{y_{3}y_{1}}$ which in turn gives

\begin{align*}
t'_{2} s'_{y_{1}}=y^{p-1}_{1}\left( pf' + \frac{y_{2}}{y_{1}}t'_{1}(p-1) -(pst_{1} + pgt_{2})'_{s}\right)s'_{y_{3}}.
\end{align*}
Hence 
\begin{align}\label{support2}
t'_{2}\cdot  (s'_{y_{1}})^{2}=y^{p-1}_{1}\left( pf' + \frac{y_{2}}{y_{1}}t'_{1}(p-1) -(pst_{1} + pgt_{2})'_{s}\right)s'_{y_{3}}s'_{y_{1}}.
\end{align}

We keep in mind this identity and  continue our calculations 

\begin{align*} 
N''_{y_{1}y_{1}} = (p-1)y^{p-2}_{1}\left( pf + \frac{y_{2}}{y_{1}}t_{1}(p-2) -pst_{1} - pgt_{2}\right) + y^{p-1}_{1}\left( pf' + \frac{y_{2}}{y_{1}}t'_{1}(p-1) -(pst_{1} + pgt_{2})'_{s}\right)s'_{y_{1}}. 
\end{align*}

So, finally we obtain
\begin{align*}
N''_{y_{1}y_{1}}N''_{y_{3}y_{3}}-(N''_{y_{1}y_{3}})^{2} = 
t'_{2}\left( N''_{y_{1}y_{1}}s'_{y_{3}} - t'_{2} (s'_{y_{1}})^{2}\right).
\end{align*}

Now we use the identity (\ref{support2}), and we  substitute the expression  $t'_{2} (s'_{y_{1}})^{2}$:

\begin{align*} 
&N''_{y_{1}y_{1}}N''_{y_{3}y_{3}}-(N''_{y_{1}y_{3}})^{2} = t'_{2}s'_{y_{3}}\left( N''_{y_{1}y_{1}} - y^{p-1}_{1}\left( pf' + \frac{y_{2}}{y_{1}}t'_{1}(p-1) -(pst_{1} + pgt_{2})'_{s}\right)s'_{y_{1}}\right)=t'_{2}s'_{y_{3}} \times \\
&(p-1)y^{p-2}_{1}\left( pf + \frac{y_{2}}{y_{1}}t_{1}(p-2) -pst_{1} - pgt_{2}\right)  =-t'_{2}s'_{y_{3}}p(p-1)y_{1}^{p-2}\left(st_{1}+gt_{2}-f + \frac{y_{2}}{y_{1}}t_{1}\cdot \left(\frac{2}{p}-1\right) \right).
\end{align*}
\end{proof}

Now we are going to consider several cases when the points $(y_{2}/y_{1},y_{3}/y_{1}^{p})$ belong to the different subdomains in $\Omega_{3}$. Note that we always have $N''_{y_{3}y_{3}} \leq 0$, because of the fact that $\Bell$ is concave in $\Omega_{3}$ and  (\ref{extendab}). So we only have to check that the determinant of the Hessian  $N$ is negative. If the determinant of the Hessian is zero, then it is sufficient to ensure that $N''_{y_{3}y_{3}}$ is strictly negative, and if $N''_{y_{3}y_{3}}$ is also zero, then we need to ensure that $N''_{y_{1},y_{1}}$ is nonpositive.

\textbf{Domain $\Omega(\Theta[s_{0},y_{p}])$}.

In this case we can use the equality (\ref{neim3}), and we obtain that 
$$
st_{1}+gt_{2}-f = y_{p}t_{1}.
$$
Therefore 
\begin{align*}
N''_{y_{1}y_{1}}N''_{y_{3}y_{3}}-(N''_{y_{1}y_{3}})^{2} = 
-t'_{2}s'_{y_{3}}p(p-1)y_{1}^{p-2}t_{1}y_{p}\left(1 + \frac{y_{2}}{y_{1}}\right) \geq 0.
\end{align*}
 because $t_{1} \geq 0$. Indeed,  $t_{1}(s)$ is continuous on $[c,1]$, where $c$ is the root of the cup and  $\Bell''_{y_{2}y_{2}}=t'_{1}s'_{y_{2}} \leq 0$, therefore, because of the fact $s'_{y_{2}}>0$,  it suffices to check that $t_{1}(1) \geq 0$ which follows from the following inequality  
$$
t_{1}(1) = f'(1) - t_{2}(1)g'(1) = f'(1) >0.
$$

\textbf{Domain of linearity $\operatorname{Ang}(s_{0})$.}

This is the domain which is obtained by the triangle $ABC$, where $A=(-1,g(-1)),$ $B=(s_{0},g(s_{0})),$ and $C=(-1,h(s_{0}))$ if $s_{0}< y_{p}$ and by the infinity domain of linearity, which is  rectangular type, and which lies  between the chords $AB$, $BC',$ where $C'=(s_{0}, +\infty)$ and $AC'',$ where $C''=(-1,+\infty)$ (see Figure \ref{fig:vvv}).

Suppose the points $\left(y_{2}/y_{1}, y_{3}/y_{1}^{p} \right)$ belong to the interior of $\operatorname{Ang}(s_{0})$. Then the gradient function $t(s)$ of $\Bell$ is constant, and moreover $s\left( \frac{y_{2}}{y_{1}}, \frac{y_{3}}{y_{1}^{p}}\right)$ is constant. 
The fact that the determinant of the Hessian is zero in the domain of linearity (note that $s'_{y_{3}} =0$) implies that we only need to check $N''_{y_{1}y_{1}}<0$. Equality (\ref{erti}) implies
\begin{align*}
&N''_{y_{1}y_{1}} = (p-1)y_{1}^{p-2}\left(pf+\frac{y_{2}}{y_{1}}t_{1}(p-2)-ps_{0}t_{1}-pgt_{2} \right) \leq (p-1)y_{1}^{p-2}(pf-ps_{0}t_{1}-pgt_{2}-t_{1}(p-2))= 0.
\end{align*}
The last equality follows from (\ref{neim3}). The above inequality turns into the equality if and only if  $\frac{y_{2}}{y_{1}} = s_{0}$, this is the boundary point of $\operatorname{Ang}(s_{0})$.  

\textbf{Domain of vertical segments.}

On the vertical segments determinant of the Hessian is zero (for example, because the vertical segment is vertical segment in all directions) and $\Bell''_{y_{3}y_{3}}=0$, therefore, we must check that $N''_{y_{1}y_{1}} \leq 0$.
We note that $s(y_{2},y_{3}) = y_{2},$ therefore, 

\begin{align*}
&N''_{y_{1}y_{1}} = y_{1}^{p-2}\times \left[ (p-1)\left(pf + st_{1}(p-2)-pst_{1}-pgt_{2}\right)-s\left(pf'-t'_{1}s -t_{1}p -pg't_{2}\right)\right].
\end{align*}
However, from (\ref{bdcondition}) we have $pf'-t_{1}p-pg't_{2}=0,$ therefore, 
\begin{align*}
N''_{y_{1}y_{1}} = y_{1}^{p-2}\times \left[ (p-1)\left(pf - 2st_{1}-pgt_{2}\right)+s^{2}t'_{1}\right].
\end{align*}
Condition $t'_{1}\leq 0$ implies that it is sufficient  to show  $pf - 2st_{1}-pgt_{2} \leq 0$. 
We use (\ref{bdcondition}), and we find $t_{1} = f'-g't_{2}$. Hence, 
\begin{align*}
pf - 2st_{1}-pgt_{2} = pf-gpt_{2}-2s(f'-g't_{2}) = pf-2sf'-t_{2}(gp-2sg').
\end{align*}
Note that $gp-2sg' \geq 0$ (because $s\geq 0$ and $g'\leq 0$), and we recall that from (\ref{bdcondition}) and the fact that on the vertical segments $t_{2}$ is constant, since we have $\cos \theta(s) =0$ (see the expression of $t_{2}$ from Lemma~\ref{texnika}), so $t_{2}$ is constant and hence  $0 \geq t'_{1} = f'' - g''t_{2}$, therefore, we have $t_{2} \geq f''/g''$. Therefore,
\begin{align*}
pf-2sf'-t_{2}(gp-2sg') \leq pf - 2sf' - \frac{f''}{g''}(gp-2sg').
\end{align*}
Now we recall the values (\ref{bc}), (\ref{fc}), and after direct calculations we obtain 
\begin{align*}
pf - 2sf' - \frac{f''}{g''}(gp-2sg') = \frac{f (1-s^{2})p(p-2)(\tau^{2}(1+s)^{2}+(1-s)^{2}+2\tau^{2}(1-s^{2}))}{(p-1)((1+s)^{2}+\tau^{2}(1-s)^{2})^{2}} \leq 0.
\end{align*}

\textbf{Domain of the cup $\Omega(\Theta_{\operatorname{cup}}((c,s_{0}],g))$.}

\begin{wrapfigure}[16]{r}{0pt}
\includegraphics[scale=0.85]{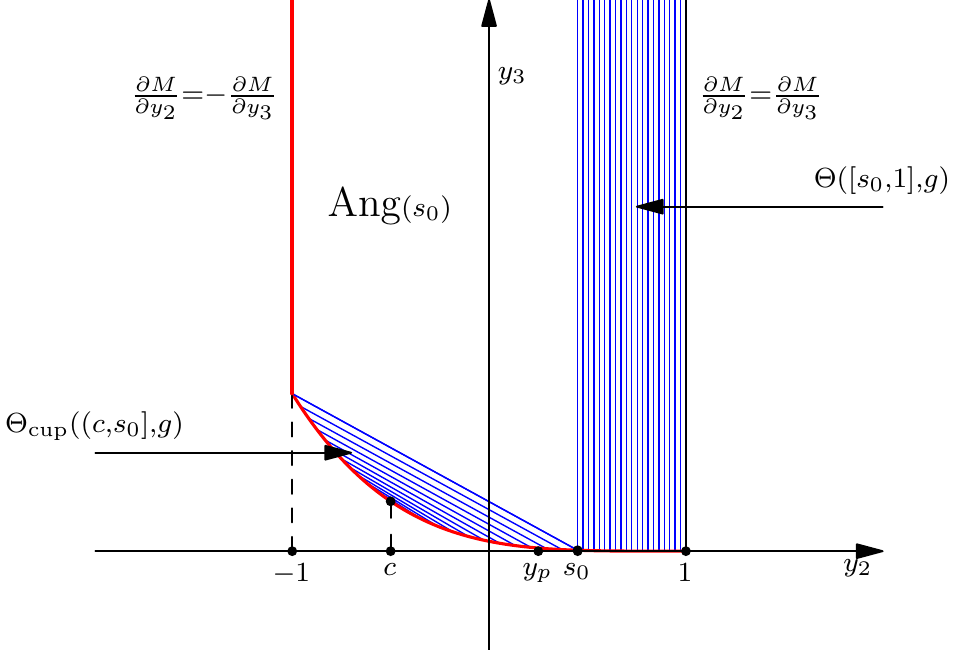}
\caption{Case $u\left(\frac{1}{p-1}\right)\geq 0$ }
\label{fig:vvv}
\end{wrapfigure}
The condition that $N''_{y_{3}y_{3}}$ is strictly negative in the cup implies that we  
only need to show $st_{2}+gt_{3}-f + \frac{y_{2}}{y_{1}}t_{1}(\frac{2}{p}-1) \geq 0$, where $s=s(y_{2}/y_{1}, y_{3}/y_{1}^{p})$ and the points $y = (y_{2}/y_{1}, y_{3}/y^{p}_{1})$ lie in the cup. We can think that $y_{2}/y_{1} \to y_{2}$ and $y_{3}/y_{1}^{p} \to y_{3}$ and $s(y_{2}/y_{1}, y_{3}/y_{1}^{p}) \to s(y_{2},y_{3})$, and we can think that the  points $(y_{2},y_{3})$ lie in the cup. 
Therefore it suffices to show that $st_{2}+gt_{3}-f + y_{2}t_{1}(\frac{2}{p}-1) \geq 0$, where $y=(y_{2},y_{3}) \in \Omega(\Theta_{\mathrm{cup}}((c,s_{0}],g))$. On a segment with the fixed endpoint $(s,g(s))$  the expressions $s,t_{1},g(s), t_{2},f(s)$ are constant, except of $y_{2}$, so the expression $st_{1}+gt_{2}-f + y_{2}t_{1}(\frac{2}{p}-1)$ is linear with respect to the $y_{2}$ on the each segment of the cup. Therefore, the worst case appears  when $y_{2} = a(s)$ ($a(s)$ $-$ is the left end (it is abscissa) of the given segment). This is true because $t_{1}\geq 0$ (as it was already  shown) and $(\frac{2}{p}-1) \geq 0$. So, as the result, we derive that it is sufficient to prove the inequality 
\begin{align}\label{support3}
st_{1}+gt_{2}-f + a(s)t_{1}\cdot \left(\frac{2}{p}-1\right)=t_{1}(s-a(s))+gt_{2}-f+\frac{2a(s)}{p}t_{1} \geq 0.
\end{align}
We use the identity (\ref{bellmanfunction}) at the point $y=(a(s),g(a(s)))$, and we find that 
$$
t_{1}(s-a(s))+gt_{2}-f = g(a(s))t_{2}-f(a(s)).
$$
We substitute this expression into (\ref{support3}) then we will get that it suffices  to prove the inequality:
\begin{align}\label{support4}
g(a(s))t_{2}-f(a(s))+\frac{2a(s)}{p}t_{1} \geq 0.
\end{align}

We differentiate condition $\Bell(a(s),g(a(s)))=f(s)$ with respect to $s$. Then we find the expression for $t_{1}(s)$, namely $t_{1}(s) = f'(a(s))-t_{2}(s)g'(a(s))$.
After substituting this expression into (\ref{support4}) we obtain that:

\begin{align*}
g(a(s))t_{2}-f(a(s))+\frac{2a(s)}{p}t_{1}  = \frac{1+z}{g'(z)}\left(\frac{(z-1)(\tau^{2}+1)f(z)}{((1+z)^{2}+\tau^{2}(1-z)^{2})g'(z)}  - t_{2}(s)\right),
\end{align*}
where $z=a(s)$. So it suffices to show that 
\begin{align}\label{sp4}
\frac{(z-1)(\tau^{2}+1)f(z)}{((1+z)^{2}+\tau^{2}(1-z)^{2})g'(z)}  - t_{2}(s) \leq 0
\end{align}
because $g'$ is negative. We are going to show that the condition (\ref{sp4}) is sufficient to check at the point $z=-1$.  Indeed, note that $(t_{2})'_{z}\geq 0$ on $[-1,c]$, where $c$ is the root of the cup, and also note that 
\begin{align*}
\left( \frac{(z-1)(\tau^{2}+1)f}{((1+z)^{2}+\tau^{2}(1-z)^{2})g'}\right)'_{z}=\frac{\tau^{2}+1}{p} (p-2)(1-z)^{-(p-1)}[(1+z)^{2}+\tau^{2}(1-z)^{2}]^{p/2 -2}2(1+z)\leq 0. 
\end{align*} 

The condition (\ref{sp4}) at the point $z=-1$ turns into the following condition
\begin{align*}
t_{2}(s_{0}) - \frac{\tau^{p-2}(\tau^{2}+1)}{p}\geq 0.
\end{align*}
Now we recall (\ref{differentiala}) and  $t_{2}(s_{0}) =(f'(-1)-f'(s_{0})/(g'(-1)-g'(s_{0}))$, therefore we have 
\begin{align*}
t_{2}(s_{0}) - \frac{\tau^{p-2}(\tau^{2}+1)}{p} \geq \frac{f''(-1)}{g''(-1)}-\frac{\tau^{p-2}(\tau^{2}+1)}{p} = \frac{\tau^{p}(p-1)^{2}+\tau^{p-2}}{p(p-1)}  >0. 
\end{align*} 

Thus we finish this section by the following remark.
\begin{Rem}
We still have to check the cases when the points $\left( \frac{y_{2}}{y_{1}}, \frac{y_{3}}{y_{1}^{p}}\right)$ belong to the boundary of $\operatorname{Ang}(s_{0})$ and the vertical rays $y_{2}=\pm 1$ in $\Omega_{3}$. The reader can easily see that in this case concavity of $N$  follows from the observation that $N \in C^{1}(\Omega_{1})$. Symmetry of $N$ covers the rest of the cases when  $\left(\frac{y_{2}}{y_{1}},\frac{y_{3}}{y_{1}^{p}}\right) \notin \Omega_{3}$. 
\end{Rem}
Thus we have constructed the candidate $N$. 

\section{Sharp constants via foliation}\label{sharp}
\subsection{Main theorem}
We remind the reader the definition of the functions $u(z)$, $g(s)$, $f(s)$ (see (\ref{lkonec}), (\ref{gc}), (\ref{fc})), the value $y_{p} = -1 + 2/p$ and the definition of the function $a(s)$ (see Lemma~\ref{functiona}, Lemma~\ref{unique} and Remark~\ref{extenda}). 
\begin{Th}\label{fullth}
Let $1 <p<2,$ and let  $G$ be the martingale transform of $F$ and let $|\mathbb{E}G| \leq \beta |\mathbb{E}F|$. Set $\beta' = \frac{\beta-1}{\beta+1}$.\\
\textup{(i)} If $u\left(\frac{1}{p-1}\right) \leq 0$ then
\begin{align}\label{putoloba}
\mathbb{E}(\tau^{2}F^{2}+G^{2})^{p/2} \leq \left(\tau^{2}+\max\left\{\beta,\frac{1}{p-1}\right\}^{2} \right)^{\frac{p}{2}}\mathbb{E}|F|^{p}.
\end{align}
\textup{(ii)}
If $u\left(\frac{1}{p-1}\right) > 0$ then
\begin{align*}
\mathbb{E}(\tau^{2}F^{2}+G^{2})^{p/2} \leq C(\beta')\mathbb{E}|F|^{p},
\end{align*}
where  $C(\beta')$ is continuous,  nondecreasing, and it is  defined by the following way: 
\begin{align*}
C(\beta')\df 
\displaystyle
\begin{cases} 
\left(\tau^{2}+\beta^{2} \right)^{p/2}, & \beta' \geq s^{*};\\
\\
\displaystyle
\frac{\tau^{p}}{1-\frac{2^{2-p}(1-s_{0})^{p-1}}{(\tau^{2}+1)(p-1)(1-s_{0})+2(2-p)}}, & \beta' \leq  -1+\frac{2}{p};\\
\\
\displaystyle
\frac{f'(s_{1})-f'(a(s_{1}))}{g'(s_{1})-g'(a(s_{1}))}, & R(s_{1},\beta')=0\quad \text{for} \quad s_{1}\in (\beta', s_{0});\\
\end{cases}
\end{align*}
where $s_{0} \in (-1+2/p,1)$ is the solution of the equation $u\left( \frac{1+s_{0}}{1-s_{0}}\right)=0,$ and the function $R(s,z)$ is defined as follows 
\begin{align*}
&R(s,z)\df -f(s)-\frac{f'(a(s))g'(s)-f'(s)g'(a(s))}{g'(s)-g'(a(s))}\left(z-s \right)+\\
&\frac{f'(s)-f'(a(s))}{g'(s)-g'(a(s))}g(s)=0, \quad z \in [-1+2/p,s^{*}],\quad  s\in [z,s_{0}].
\end{align*}
The value $s^{*} \in [-1+2/p, s_{0}]$ is the solution of the equation 
\begin{align}\label{zvezda}
\frac{f'(s^{*})-f'(a(s^{*}))}{g'(s^{*})-g'(a(s^{*}))}=\frac{f(s^{*})}{g(s^{*})}.
\end{align}
\end{Th}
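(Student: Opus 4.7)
The plan is to deduce the theorem from the diagonally concave candidate $\Bell$ constructed in Section~\ref{construction}, via the majorization principle of Section~\ref{sec2}. Once the candidate (extended from $M$ on $\Omega_{3}$ to $N$ on $\Omega_{1}$ by (\ref{extendab}) and (\ref{symmetrym}), and then pulled back to $\Omega$ by the inverse rotation of (\ref{coorchange})) is checked to be continuous, diagonally concave, and to agree with the boundary data $(x_{2}^{2}+\tau^{2}x_{1}^{2})^{p/2}$ on $\partial\Omega$, Proposition~\ref{mazhorantochka} gives $H\leq \Bell$, and therefore
\begin{equation*}
\mathbb{E}(\tau^{2}F^{2}+G^{2})^{p/2}\leq H\bigl(\mathbb{E}F,\mathbb{E}G,\mathbb{E}|F|^{p}\bigr)\leq \Bell\bigl(\mathbb{E}F,\mathbb{E}G,\mathbb{E}|F|^{p}\bigr).
\end{equation*}
The sharp constant is then the supremum of $\Bell(\s{x})/x_{3}$ over $\s{x}\in\Omega$ subject to $|x_{2}|\leq\beta|x_{1}|$. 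By the homogeneity (\ref{extend}) and symmetries (\ref{symmetrym}), this reduces to a two-variable optimization of $M(y_{2},y_{3})/y_{3}$ on the strip of $\Omega_{3}$ cut out by $|y_{2}|\leq\beta'$.

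Next I would locate this extremum inside the stratification of $\Omega_{3}$ induced by the foliation of $M$: the cup $\Omega(\Theta_{\operatorname{cup}}((c,s_{0}],g))$, the linear piece $\operatorname{Ang}(s_{0})$, the chord pencil on $[s_{0},y_{p}]$ going into the wall $y_{2}=-1$, and the vertical rays on $[y_{p},1]$. By Remark~\ref{daxmareba}, case (i) corresponds to $s_{0}\leq y_{p}$ and case (ii) to $s_{0}>y_{p}$. In case (i), when $\beta\geq 1/(p-1)$ (equivalently $\beta'\geq y_{p}$) the extremum sits on the vertical stratum, and a direct calculation from (\ref{bellmanfunction}), (\ref{bc}) and the explicit form of $f,g$ shows $f(\beta')/g(\beta')=(\tau^{2}+\beta^{2})^{p/2}$; when $\beta<1/(p-1)$ the diagonal concavity of $\Bell$ forces the maximizer to freeze at $y_{2}=y_{p}$, yielding the constant $(\tau^{2}+1/(p-1)^{2})^{p/2}$. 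In case (ii) the cup extends past $y_{p}$ and three regimes appear. For $\beta'\geq s^{*}$ the extremum is again on the vertical stratum, giving $(\tau^{2}+\beta^{2})^{p/2}$. For $\beta'\leq y_{p}$ the extremum lies in the chord pencil and reduces to the closed expression in $s_{0}$ displayed in the statement. For $\beta'\in(y_{p},s^{*})$ the extremum lies inside the cup: the slope in question is the cup slope (\ref{initiald}), and the geometric requirement that the cup chord from $(s_{1},g(s_{1}))$ to $(a(s_{1}),g(a(s_{1})))$ passes over the extremizer is exactly the equation $R(s_{1},\beta')=0$. The threshold $s^{*}$ from (\ref{zvezda}) is the value at which the cup-chord slope equals the vertical-segment slope, securing continuity of $C(\beta')$ at the gluing.

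For sharpness I would use the linearity of $\Bell$ along each segment of the foliation to build admissible $(F,G)$ achieving the bound within $\varepsilon$. The construction, to be carried out in Section~\ref{fol}, realizes each segment by a two-valued martingale step whose two endpoints land on $\partial\Omega$ (or limit onto it); linearity of $\Bell$ along the segment makes Jensen's inequality an equality, and concatenating such splittings produces $(F_{n},G_{n})$ with $\mathbb{E}(\tau^{2}F_{n}^{2}+G_{n}^{2})^{p/2}\to C(\beta')\,\mathbb{E}|F|^{p}$. The main obstacle I anticipate is the bookkeeping at the transitions between strata: verifying that $R(s_{1},\beta')=0$ has a unique root $s_{1}\in(\beta',s_{0})$ for each $\beta'\in(y_{p},s^{*})$, that the three pieces of $C(\beta')$ glue continuously at $\beta'=y_{p}$ and $\beta'=s^{*}$, and that $C(\beta')$ is nondecreasing. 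These rest on the comparison tools of Lemma~\ref{comparison}, the force-function criterion of Corollary~\ref{sledstviesily}, and the monotonicity of $a(s)$ from Lemma~\ref{functiona}.
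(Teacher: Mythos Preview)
Your global strategy matches the paper's: majorize $H$ by the constructed $\Bell$, reduce by homogeneity and symmetry to maximizing $\Bell(y_{2},y_{3})/y_{3}$ over $\Omega_{3}\cap\{-1\le y_{2}\le\beta'\}$, and read off the constant from the foliation. The paper does exactly this, first showing $(\Bell/y_{3})'_{y_{2}}=t_{1}(s)/y_{3}\ge 0$ so the supremum is always at $y_{2}=\beta'$, and then analyzing $\partial_{y_{3}}(\Bell/y_{3})$ whose numerator is precisely $R(s(\beta',y_{3}),\beta')$.

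However, your description of the foliation in case (ii) is structurally wrong, and this would derail the computation. When $u(1/(p-1))>0$, i.e.\ $s_{0}>y_{p}$, there is \emph{no} chord pencil $\Theta([s_{0},y_{p}),g)$ reaching the wall $y_{2}=-1$; that pencil exists only in case (i). In case (ii) the foliation on $\Omega_{3}$ consists of the cup $\Theta_{\operatorname{cup}}((c,s_{0}],g)$, an unbounded rectangular linearity domain $\operatorname{Ang}(s_{0})$, and vertical rays on $[s_{0},1]$ (Figure~\ref{fig:vvv}). Consequently for $\beta'\le y_{p}$ the extremum does not ``lie in the chord pencil''; rather $R(s,\beta')\ge 0$ for all relevant $s$, so $\Bell(\beta',y_{3})/y_{3}$ is nondecreasing in $y_{3}$ and the constant is the limit $t_{2}(s_{0})$, which the paper then rewrites as the closed expression in $s_{0}$. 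Likewise for $\beta'\in[s^{*},s_{0})$ the point $(\beta',g(\beta'))$ is still in the cup, not on a vertical segment; the value $(\tau^{2}+\beta^{2})^{p/2}$ arises because $R(\beta',\beta')\le 0$ there, forcing the maximum to $y_{3}=g(\beta')$. In case (i) with $\beta'<y_{p}$, the maximizer cannot ``freeze at $y_{2}=y_{p}$'' since that is outside the constraint; the paper sends $y_{3}\to\infty$ along the chord pencil and obtains $t_{2}(y_{p})=(\tau^{2}+1/(p-1)^{2})^{p/2}$ as the limit. Finally, your extremizer sketch is adequate for case (i), but in case (ii) a single two-valued split does not reach the boundary from points on the vertical line $y_{2}=-1$; the paper uses an $\eps$-recursive construction (Section~\ref{fol}, case $s_{0}>y_{p}$) with iterated rescalings and reflections to build the $\eps$-extremizer.
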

\begin{proof}
Before we investigate some of the cases mentioned in the theorem, we should make the following observation.
The inequality of the type (\ref{putoloba}) can be restated as follows
\begin{align}\label{sharp1}
H(x_{1},x_{2},x_{3}) \leq C x_{3},
\end{align}
where  $H$ is defined by (\ref{bellmanf}) and $x_{1} = \mathbb{E}F,$ $x_{2} = \mathbb{E}G,$ $x_{3} = \mathbb{E}|F|^{p}$.  In order to derive the estimate (\ref{putoloba}) we have to find the sharp $C$ in (\ref{sharp1}). Because of the property (\ref{symmetry}) we can assume that both of the values  $x_{1},x_{2}$ are nonnegative. So non-negativity of $x_{1},x_{2}$ and the condition $|\mathbb{E}G| \leq \beta |\mathbb{E}F|$ can be reformulated as follows
\begin{align}\label{conditionvar}
-\frac{x_{1}+x_{2}}{2} \leq \frac{x_{2}-x_{1}}{2} \leq \left(\frac{\beta-1}{\beta+1}\right)\left(\frac{x_{1}+x_{2}}{2}\right).
\end{align}
The condition (\ref{conditionvar}) with (\ref{sharp1}) in terms  of the function $N$ and the variables $y_{1},y_{2},y_{3}$ means that we have to find the sharp $C$ such that 
\begin{align*}
N(y_{1},y_{2},y_{3}) \leq Cy_{3} \quad \text{for} \quad -y_{1} \leq y_{2} \leq \left(\frac{\beta-1}{\beta+1}\right)y_{1}, \quad \s{y} \in \Omega_{2}.
\end{align*}
Because of (\ref{extend}) the above condition can be reformulated as follows
\begin{align}\label{fcondition}
\Bell(y_{2},y_{3}) \leq Cy_{3} \quad \text{for} \quad -1 \leq y_{2} \leq \left(\frac{\beta-1}{\beta+1}\right), \quad y_{3} \geq g(y_{2}),
\end{align}
where $\Bell(y_{2},y_{3}) = N(1,y_{2}, y_{3})$. So our aim is to find the sharp $C$, in other words the value $\sup_{y_{1},y_{2}} \Bell/y_{3}$ where the supremum is taken from the domain mentioned in (\ref{fcondition}). Note that  the quantity $\Bell(y_{2},y_{3})/y_{3}$ increases with respect to the variable $y_{2}$. Indeed, ($\Bell(y_{2},y_{3})/y_{3})'_{y_{2}} = t_{1}(s(y))/y_{3}$, where the function $t_{1}(s)$ is nonnegative on $[c_{0},1]$ (see the end of the proof of the concavity condition in the domain $\Omega(\Theta[s_{0},y_{p}])$). Note that as we increase the value $y_{2}$ then the range of $y_{3}$  also increases. This means that the supremum of the expression $\Bell/y_{3}$ is  attained on the subdomain where  $y_{2} = (\beta-1)/(\beta+1)$. It is worth mentioning that since the quantity  $(\beta-1)/(\beta+1) \in [-1,1]$ increases as $\beta$ increases and because of the observation made above we see that the value $C$ increases as the $\beta'$ increases. 

\subsection{Case $y_{p}\leq s_{0}$.}
We are going to investigate the simple case (i). Remark~\ref{daxmareba} implies that 
 $s_{0}\leq y_{p}$, in other words, the foliation of vertical segments is  $\Theta([y_{p},1],g)$ where the value $\theta(s)$ on $[y_{p},1]$ is equal to $\pi/2$.
This means that $t_{2}(s)$ is constant on $[y_{p},1]$ (see Lemma~\ref{texnika}), and it is equal to $f(y_{p})/g(y_{p})=(\tau^{2}+\frac{1}{(p-1)^{2}})^{p/2}$ (see (\ref{fromsurf1})).

 If $\frac{\beta-1}{\beta+1}\geq y_{p}$, or equivalently $\beta \geq \frac{1}{p-1}$, then the function $\Bell$ on the vertical segment with the endpoint $(\beta',g(\beta'))$ where $\frac{\beta-1}{\beta+1} = \beta' \in [y_{p},1)$ has the expression (see (\ref{bellmanfunction}))
 \begin{align*}
 \Bell(\beta',y_{3})=f(\beta')+\frac{f(y_{p})}{g(y_{p})}(y_{3}-g(\beta')), \quad  y_{3} \geq g(\beta').
 \end{align*} 
Therefore, 
\begin{align}\label{ffcond}
\frac{\Bell(\beta',y_{3})}{y_{3}} =\frac{f(y_{p})}{g(y_{p})} + \frac{g(\beta')}{y_{3}}\left(\frac{f(\beta')}{g(\beta')}-\frac{f(y_{p})}{g(y_{p})}\right), \quad y_{3} \geq g(\beta').
\end{align}
The expression $f(s)/g(s)$ is strictly increasing on $(-1,1)$, therefore, the expression (\ref{ffcond}) attains its maximal value at the point $y_{3} = g(\beta')$. Thus,  we have 
\begin{align*}
\frac{\Bell(y_{2},y_{3})}{y_{3}} \leq  \frac{\Bell(\beta',y_{3})}{y_{3}} \leq \frac{\Bell(\beta',g(\beta'))}{g(\beta')} = \frac{f(\beta')}{g(\beta')} =\left(\tau^{2}+\beta^{2} \right)^{p/2} \quad \text{for} \quad -1\leq y_{2}\leq \beta',\; y_{3} \geq g(y_{2}).
\end{align*}

 If $\frac{\beta-1}{\beta+1}< y_{p}$, or equivalently $\beta < \frac{1}{p-1}$, then we can achieve such value for $C$ which was achieved at the moment $\beta = \frac{1}{p-1}$, and since the function $C = C(\beta')$ increases as $\beta'$ increases this value will be the best. Indeed, it suffices to look at the foliation (see Figure~\ref{fig:ccv}). For any fixed $y_{2}$ we send $y_{3}$ to $+\infty,$ and we obtain that 
 \begin{align*}
 &\lim_{y_{3} \to \infty}\frac{\Bell(y_{2},y_{3})}{y_{3}} =\lim_{y_{3} \to \infty} \frac{f(s)+t_{1}(s)(y_{2}-s)+t_{2}(s)(y_{3}-g(s))}{y_{3}} = \lim_{y_{3} \to \infty} t_{2}(s(y)) = t_{2}(y_{p}) = \left(\tau^{2} + \frac{1}{(p-1)^{2}}\right)^{p/2}.
 \end{align*}
\subsection{Case $y_{p}> s_{0}$.}
 As it was already mentioned, the condition in the case (ii) is equivalent to the inequality $s_{0}> y_{p}$ (see Remark~\ref{daxmareba}). This means that that the foliation of the vertical segments is $\Theta([s_{0},1],g)$ (see Figure~\ref{fig:vvv}).
 We know that  $C(\beta')$ is increasing. We remind that  we are going to maximize  the function $\frac{\Bell(y_{2},y_{3})}{y_{3}}$ on the domain mentioned in (\ref{fcondition}). It was already mentioned that we can require  $y_{2} = \left(\frac{\beta-1}{\beta+1} \right) = \beta'$. For such fixed $y_{2} = \beta' \in [-1,1]$ we are going to  investigate the monotonicity of the function $\frac{\Bell(\beta',y_{3})}{y_{3}}$. We consider several cases. Let $\beta' \geq s_{0}$. We differentiate the function $\Bell(\beta',y_{3})/y_{3}$ with respect to the variable $y_{3}$, and we use the expression (\ref{bellmanfunction}) for $\Bell$, so we obtain that  
 \begin{align*}
\frac{\partial }{\partial y_{3}}\left( \frac{\Bell(\beta',y_{3})}{y_{3}} \right) = \frac{t_{2}(\beta')y_{3} - \Bell(\beta',y_{3})}{y_{3}^{2}}=\frac{-f(\beta')+t_{2}(\beta')g(\beta')}{y_{3}^{2}}.
\end{align*}
Recall that $t_{2}(s) = t_{2}(s_{0})$ for $s \in [s_{0},1]$, therefore, direct calculations imply 
 
 \begin{align*}
t_{2}(\beta') = \frac{f(s_{0})-(s_{0}-y_{p})f'(s_{0})}{g(s_{0})-(s_{0}-y_{p})g'(s_{0})} < \frac{f(s_{0})}{g(s_{0})} \leq \frac{f(\beta')}{g(\beta')}, \quad \beta' \geq s_{0}.
\end{align*}
 This implies that 
  \begin{align*}
C(\beta') = \sup_{y_{3} \geq g(\beta')} \frac{\Bell(\beta',y_{3})}{y_{3}}  =\left. \frac{\Bell(\beta',y_{3})}{y_{3}}\right|_{y_{3}=g(\beta')}=\frac{f(\beta')}{g(\beta')} = (\tau^{2}+\beta^{2})^{p/2}.
 \end{align*}

 Now we consider the case $\beta' <s_{0}$. 
 
  For each point $y=(\beta',y_{3})$ that belongs to the line $y_{2}=\beta'$ there exists a segment $\ell(y) \in \Theta((c,s_{0}],g)$ with the endpoint $(s,g(s))$ where  $s \in [\max\{\beta',a(\beta')\},s_{0}]$. If the point $y$ belongs to the domain of linearity $\operatorname{Ang}(s_{0})$, then we can choose the value $s_{0}$, and consider any segment with the endpoints $y$ and $(s_{0},g(s_{0}))$ which surely  belongs to the domain of linearity. 
The reader can easily see that as we increase the value  $y_{3}$ the value $s$ increases as well.  So, 
\begin{align*}
\frac{\partial }{\partial y_{3}}\left( \frac{\Bell(\beta',y_{3})}{y_{3}} \right) = \frac{t_{2}(s)y_{3} - \Bell(\beta',y_{3})}{y_{3}^{2}}=\frac{-f(s)-t_{1}(s)(\beta'-s)+t_{2}(s)g(s)}{y_{3}^{2}}.
\end{align*}

 Our aim is to investigate the sign of the expression $-f(s)-t_{1}(s)(\beta'-s)+t_{2}(s)g(s)$ as we variate the value $y_{3} \in [g(\beta'), +\infty)$.
 Without loss of generality we can forget about the variable $y_{3}$, and we can variate only the value $s$ on the interval $[\max\{\alpha(\beta'),\beta'\},s_{0}]$.
 
 We consider the function $R(s,z) \df -f(s)-t_{1}(s)(z-s)+t_{2}(s)g(s)$ with the following domain  $-1 \leq z \leq s_{0}$ and $s \in [\max\{ \alpha(z),z\},s_{0}]$ (see Figure~\ref{fig:domainr}).
 As we already have seen $R(s_{0},s_{0}) <0$. Note that $R(s_{0},-1) >0$. Indeed, note that $R(s_{0},-1)=t_{2}(s_{0})g(-1)-f(-1)$. This equality follows from the fact that 
 \begin{align*}
 f(s_{0})-f(-1)=t_{1}(s_{0})(s_{0}+1)+t_{2}(s_{0})(g(s_{0})-g(-1)),
 \end{align*} 
 which is consequence of Lemma~\ref{bellmancup}. So, (\ref{fromcup}) and (\ref{differentiala}) imply 
 \begin{align*}
 t_{2}(s_{0}) = \frac{f'(-1)-f'(s_{0})}{g'(-1)-g'(s_{0})}  > \frac{f''(-1)}{g''(-1)} \geq \frac{f(-1)}{g(-1)}. 
 \end{align*}
 Note that the function $R(z,s_{0})$  is linear with respect to $z$. So on the interval $[-1,s_{0}]$ it has the root $y_{p}=-1+2/p$. Indeed, 
 \begin{align*}
 \frac{-f(s_{0})+t_{2}(s_{0})g(s_{0})+t_{1}(s_{0})s_{0}}{t_{1}(s_{0})}=y_{p}.
 \end{align*}
  The last equality follows from (\ref{fromcup}), (\ref{odno}) and (\ref{bdcondition}).
We need few more properties of the function $R(s,z)$.
 Note that for each fixed $z$, the function  $R(s,z)$ is nonincreasing on  $[\max\{\alpha(z),z\},s_{0}]$. Indeed
 \begin{align}\label{conc1}
 R'_{s}(s,z) = -f'(s)-t'_{1}(s)(z-s)+t_{1}(s)+t'_{2}(s)g(s)+t_{2}(s)g(s).
 \end{align}
We take into account the condition (\ref{bdcondition}), so the expression (\ref{conc1}) simplifies as follows
\begin{align*}
R'_{s}(s,z) = t'_{2}(s)g(s)+t'_{1}(s)(s-z). 
\end{align*}
We remind the reader equality  (\ref{extremal}) and the fact that $t'_{2}(s) \leq 0.$ Therefore we have $R'_{s}(s,z) = y_{3}t'_{2}(s)$ where $y_{3}=y_{3}(s) >0$.  Thus we see that $R(s,\beta') \geq 0$ for $\beta' \leq y_{p}$. 

So if the function $R(\cdot, z)$ at the right end on its domain $[\max\{\alpha(z),z\},s_{0}]$ is positive, this will mean that the function $\Bell/y_{3}$ is increasing, hence, the constant $C(\beta')$ will be equal to 
\begin{align*}
C(\beta') = \lim_{y_{3} \to \infty} \frac{\Bell(z,y_{3})}{y_{3}} = t_{2}(s_{0}) =  \frac{f'(-1)-f'(s_{0})}{g'(-1)-g'(s_{0})}
\end{align*}
(this follows from (\ref{fromcup}) and the structure of the foliation).  Since $u\left(\frac{1+s_{0}}{1-s_{0}}\right)=0$ and (\ref{coincide0}) direct computations show that 
 \begin{align}\label{davixrche}
 \frac{f'(-1)-f'(s_{0})}{g'(-1)-g'(s_{0})}=\frac{\tau^{p}}{1-\frac{2^{2-p}(1-s_{0})^{p-1}}{(\tau^{2}+1)(p-1)(1-s_{0})+2(2-p)}}. 
 \end{align}
 So it follows that if $\beta' \leq y_{p}$ then (\ref{davixrche}) is the value of $C(\beta')$. 
 
  If the function $R(\cdot, z)$ on the left end of its domain is nonpositive this will mean that the function $\Bell/y_{3}$ is decreasing, so the sharp constant will be the value of the function $\Bell(z,y_{3})/y_{3}$ at the left end of its domain
 \begin{align}\label{meore}
 C(\beta')=\left. \frac{\Bell(z,y_{3})}{y_{3}}\right|_{y_{3}=g(z)}=\frac{f(z)}{g(z)} = (\tau^{2}+\beta^{2})^{p/2}.
 \end{align}
 We recall that $c$ is the root of the cup and $c < y_{p}$  (see Remark~\ref{poryadok}). We will show that the function $R(z,s)$ is decreasing on the boundary $s=z$ for  $s \in (y_{p}, s_{0}]$. Indeed, (\ref{bdcondition}) implies
 \begin{align*}
 (R(s,s))'_{s} = -f'(s) +t'_{2}(s)g(s) +t_{2}(s)g'(s) = -t_{1}(s)+t'_{2}(s)g(s)<0.
 \end{align*}
The last inequality follows from the fact that $t'_{2}(s)\leq 0$ and $t_{1}(s) >0$ on $(c,1]$. Surely  $R(y_{p},y_{p}) > R(s_{0},y_{p})=0$, and we recall that $R(s_{0},s_{0})<0$, so there exists unique  $s^{*} \in [y_{p},s_{0}]$ such that  $R(s^{*},s^{*})=0$. This is equivalent to (\ref{zvezda}). So it is clear that $R(z,z) \leq 0$ for $z \in [s^{*},s_{0}]$. Therefore $C(\beta')$ has the value (\ref{meore}) for $\beta' \geq s^{*}$.  

The only case remains is when $\beta' \in [y_{p},s^{*}]$. We know that $R(z,z)\geq 0$ for $z\in [y_{p},s^{*}]$ and $R(s_{0},z)\leq 0$ for $z\in [y_{p},s^{*}]$. The fact that for each fixed $z$ the function $R(s,z)$ is decreasing implies the following: for each  $z\in [y_{p},s^{*}]$ there exists unique $s_{1}(z) \in [z,s_{0}]$ such that $R(z,s_{1}(z))=0$. Therefore, for $\beta' \in [y_{p},s^{*}]$ we have 
\begin{align}\label{maximalm}
C(\beta') = \frac{\Bell(\beta' ,y_{3}(s_{1}(\beta')))}{y_{3}(s_{1}(\beta'))},
\end{align}
where the value $s_{1}(\beta')$ is the root of the equation $R(s_{1}(\beta'),\beta')=0$. Recall that 
\begin{align}\label{roots1}
R(s_{1}(\beta'),\beta')&= t_{2}(s_{1})y_{3}(s_{1})-\Bell(\beta',y_{3}(s_{1}))=-f(s_{1})-t_{1}(s_{1})(\beta'-s_{1})+t_{2}(s_{1})g(s_{1}). 
\end{align} 
So the expression (\ref{maximalm}) takes the form 
\begin{align*}
C(\beta') = t_{2}(s_{1})=\frac{f'(s_{1}) - f'(a(s_{1}))}{g'(s_{1})-g'(a(s_{1}))}.
\end{align*}
 Finally, we remind the reader that 
\begin{align*}
t_{2}(s) &= \frac{f'(s)-f'(a(s))}{g'(s)-g'(a(s))},\\
t_{1}(s) &= \frac{f'(a(s))g'(s)-f'(s)g'(a(s))}{g'(s)-g'(a(s))}.
\end{align*} 
for $s \in (c,s_{0}]$, and we finish the proof of the theorem. 
\end{proof}

\section{Extremizers via foliation}\label{fol}
We set $\Psi(F,G) = \mathbb{E}(G^{2}+\tau^{2}F^{2})^{2/p}$.
Let $N$ be the candidate that we have constructed in Section~\ref{construction} (see~(\ref{extendab})). We define the candidate $B$ for the Bellman function $H$ (see (\ref{bellmanf}))  as follows
\begin{align*}
B(x_{1},x_{2},x_{3}) = N\left(\frac{x_{1}+x_{2}}{2},\frac{x_{2}-x_{1}}{2}, x_{3}\right), \quad (x_{1},x_{2},x_{3}) \in \Omega.
\end{align*}
We want to show that $B = H$. We already know that $B \geq H$ (see Lemma~\ref{mazhorantochka}). So, it remains to show that $B \leq H$.  We are going to do this as follows: for each point $\s{x} \in \Omega$ and any $\eps >0$ we are going to find an admissible pair  $(F,G)$ such that 
\begin{align}\label{conzero}
\Psi(F,G) > B(\s{x})-\varepsilon.
\end{align}
Up to the end of the current section we are going to work with the coordinates $(y_{1},y_{2},y_{3})$ (see (\ref{coorchange})). 
  It will be convenient for us to redefine the notion of  admissibility of the pair. 
\begin{Def}
We say that a pair $(F,G)$ is admissible for the point $(y_{1},y_{2},y_{3}) \in \Omega_{1}$, if $G$ is the martingale transform of $F$ and $\mathbb{E}(F,G,|F|^{p})=(y_{1}-y_{2},y_{1}+y_{2},y_{3})$.
\end{Def}
So in this case condition (\ref{conzero}) in terms of the function $N$ takes the following form: 
for any point $\s{y} \in \Omega_{1}$ and  for any $\eps >0$ we are going to find an admissible pair  $(F,G)$ for the point $\s{y}$ such that 
\begin{align}\label{conzero1}
\Psi(F,G) > N(\s{y})-\varepsilon.
\end{align}

\begin{wrapfigure}[17]{r}{0pt}
\includegraphics[scale=0.85]{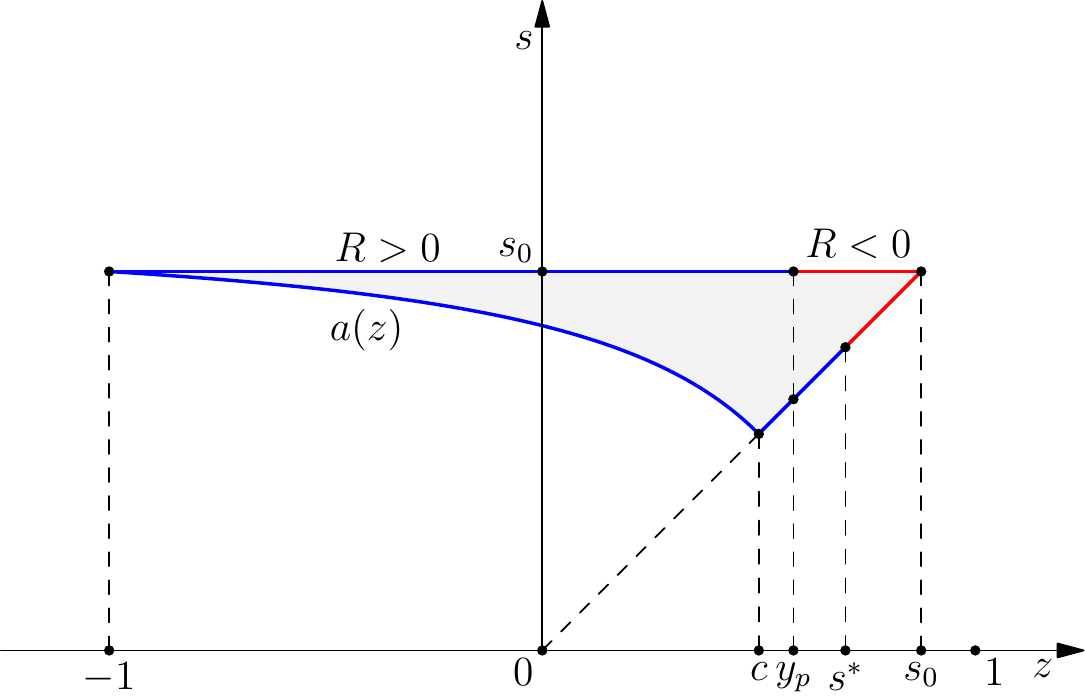}
\caption{Domain of $R(s,z)$}
\label{fig:domainr}
\end{wrapfigure}
We formulate the following obvious observations.  
\begin{Le}\label{PSI}
The following statements hold:
\begin{itemize}
\item[1.]
A pair $(F,G)$ is  admissible for the point $\s{y}=(y_{1},y_{2},y_{3})$ if and only if $(\tilde{F}, \tilde{G} ) =(\pm F,\mp G)$ is admissible for the point $\tilde{\s{y}} = (\mp y_{2},\mp y_{1},y_{3})$; moreover, 
$\Psi(\tilde{F}, \tilde{G}) = \Psi(F, G)$.
\item[2.]
A pair $(F,G)$ is admissible for the point $\s{y}=(y_{1},y_{2},y_{3})$, if and only if  $(\tilde{F}, \tilde{G} ) =(\lambda  F,\lambda  G)$ (where $\lambda \neq 0$) is admissible for the point $\tilde{\s{y}} = (\lambda y_{1},\lambda y_{2},|\lambda|^{p}y_{3})$; moreover, 
$\Psi(\tilde{F}, \tilde{G}) = |\lambda|^{p}\Psi(F, G)$.
\end{itemize}
\end{Le}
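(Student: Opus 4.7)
The plan is to verify both parts by directly unpacking the definition of admissibility and checking its two clauses separately (the pointwise martingale-transform identity $|G_{n+1}-G_n|=|F_{n+1}-F_n|$ and the expectation equality $\mathbb{E}(F,G,|F|^{p}) = (y_{1}-y_{2},y_{1}+y_{2},y_{3})$), together with a short computation of $\Psi$ for the transformed pair. Since both transformations are invertible ($(\epsilon_{1},\epsilon_{2})\mapsto(\epsilon_{1},\epsilon_{2})$ is self-inverse, $\lambda \mapsto 1/\lambda$ is the inverse for scaling), each ``if and only if'' reduces to one direction by applying the same procedure to the transformed pair.

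For part 1, I would write $(\tilde F,\tilde G)=(\epsilon_{1}F,\epsilon_{2}G)$ with $\epsilon_{1}\epsilon_{2}=-1$. The identity
\[
|\tilde G_{n+1}-\tilde G_{n}|=|G_{n+1}-G_{n}|=|F_{n+1}-F_{n}|=|\tilde F_{n+1}-\tilde F_{n}|
\]
shows $\tilde G$ is the martingale transform of $\tilde F$. Linearity of expectation gives $\mathbb{E}(\tilde F,\tilde G,|\tilde F|^{p}) = (\epsilon_{1}(y_{1}-y_{2}),\,\epsilon_{2}(y_{1}+y_{2}),\,y_{3})$, and solving the $2\times 2$ system $\tilde y_{1}-\tilde y_{2}=\epsilon_{1}(y_{1}-y_{2})$, $\tilde y_{1}+\tilde y_{2}=\epsilon_{2}(y_{1}+y_{2})$ in the two cases $(+1,-1)$ and $(-1,+1)$ produces precisely $\tilde{\s{y}}=(\mp y_{2},\mp y_{1},y_{3})$. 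Because $\tilde F^{2}=F^{2}$ and $\tilde G^{2}=G^{2}$, the integrand defining $\Psi$ is unchanged pointwise, hence $\Psi(\tilde F,\tilde G)=\Psi(F,G)$.

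For part 2, I would carry out the analogous check with $(\tilde F,\tilde G)=(\lambda F,\lambda G)$. The martingale-transform condition survives because the common factor $|\lambda|$ appears on both sides of the equality of increments. Linearity gives $\mathbb{E}\tilde F=\lambda(y_{1}-y_{2})$, $\mathbb{E}\tilde G=\lambda(y_{1}+y_{2})$, $\mathbb{E}|\tilde F|^{p}=|\lambda|^{p}y_{3}$, from which the target point $\tilde{\s{y}}=(\lambda y_{1},\lambda y_{2},|\lambda|^{p}y_{3})$ is read off directly. The pointwise factorization $\tilde G^{2}+\tau^{2}\tilde F^{2}=\lambda^{2}(G^{2}+\tau^{2}F^{2})$ raised to the power $p/2$ yields a factor $|\lambda|^{p}$, so $\Psi(\tilde F,\tilde G)=|\lambda|^{p}\Psi(F,G)$ after taking expectation.

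There is no substantive obstacle here: the lemma is purely a bookkeeping statement, which is why the paper calls these ``obvious observations.'' The only points requiring a moment's care are getting the signs right in the $2\times 2$ system of part 1 and keeping $|\lambda|^{p}$ rather than $\lambda^{p}$ in part 2 (both arising from the absolute value inside $\mathbb{E}|F|^{p}$ and the fractional power in $\Psi$).
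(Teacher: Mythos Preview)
Your proposal is correct. The paper does not supply a proof of this lemma at all, introducing it as an ``obvious observation,'' and your direct verification of the two clauses of admissibility together with the pointwise computation of $\Psi$ is exactly the routine check the paper is implicitly leaving to the reader.
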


\begin{Def}
The pair of functions $(F,G)$ is called an {\em $\varepsilon$-extremizer}  for the point $\s{y} \in \Omega_{1}$ if $(F,G)$ is  admissible for the point $\s{y}$ and  $\Psi(F,G)> N(\s{y})-\varepsilon$.
\end{Def}

Lemma~\ref{PSI}, homogeneity, and  symmetry of $N$ imply that we only need to check (\ref{conzero1})  for the points $\s{y}\in \Omega_{1}$ where $y_{1}=1$ $(y_{2},y_{3})\in \Omega_{3}$. In other words, we show that 
$\Psi(F,G)> \Bell(y_{2},y_{3})-\varepsilon$ for some admissible $(F,G)$ for the point $(1,y_{2},y_{3})$ where $(y_{2},y_{3})\in \Omega_{3}$.  Further, instead of saying that $(F,G)$ is an admissible pair (or $\eps$-extremizer) for the point $(1,y_{2},y_{3})$ we just say that it is an admissible pair  (or an $\eps$-extremizer) for the point $(y_{2},y_{3})$. So we only have to construct $\eps$-extremizers in the domain $\Omega_{3}$. 

It is worth mentioning that we construct  $\eps$-extremizers $(F,G)$ such that $G$ will be the martingale transform of $F$ with respect to some filtration other than dyadic. A detailed explanation on how to pass from one filtration to another the reader can find in~\cite{SV}. 

We need a few  more observations.  For $\alpha \in (0,1)$ we define the {\em $\alpha-$ concatenation of the pairs  $(F,G)$ and $(\tilde{F}, \tilde{G})$} as follows
\begin{align*}
(F\bullet\tilde{F}, G\bullet \tilde{G})_{\alpha}(x) = 
\begin{cases}
(F,G)(x/\alpha ) &  x \in [0,\alpha],\\
(\tilde{F},\tilde{G})((x-\alpha)/(1-\alpha)) &  x \in [\alpha,1].
\end{cases}
\end{align*}
Clearly $\Psi((F\bullet\tilde{F}, G\bullet \tilde{G})_{\alpha}(x)) = \alpha \Psi(F,G)+(1-\alpha)\Psi(\tilde{F},\tilde{G})$. 
\begin{Def}
Any domain of the type $\Omega_{1}\cap \{ y_{1}=A \}$ where $A$ is some real number is said to be a {\em positive domain}. Any domain of the type $\Omega_{1} \cap \{ y_{2}=B\}$ where $B$ is some real number is said to be a {\em negative domain}.
\end{Def}

The following lemma is obvious.
\begin{Le}\label{marttr}
If $(F,G)$ is an admissible pair for a point $\s{y}$ and $(\tilde{F},\tilde{G})$ is an admissible pair for a point $\tilde{\s{y}}$ such that either of the following is true:  $\s{y}, \tilde{\s{y}}$ belong to a positive domain, or   $\s{y}, \tilde{\s{y}}$ belong to a negative domain, then $(F\bullet\tilde{F}, G\bullet \tilde{G})_{\alpha}$ is an admissible pair for the point $\alpha \s{y} + (1-\alpha) \tilde{\s{y}}$. 
\end{Le}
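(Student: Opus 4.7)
The plan is to equip the concatenated pair with an explicit filtration adapted to the construction and then verify the two requirements of admissibility, namely the joint expectation identity and the equal-jumps condition $|\Delta F_{n}|=|\Delta G_{n}|$. The expectation identity is immediate from linearity of the integral, while the whole content of the lemma is that the positive/negative domain hypothesis is exactly what is needed for the jump condition to hold at the single new level introduced by the concatenation.

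First, I equip $(F\bullet\tilde F, G\bullet\tilde G)_{\alpha}$ with the filtration $\{\mathcal{M}_{n}^{\mathrm{new}}\}_{n\geq 0}$ defined by $\mathcal{M}_{0}^{\mathrm{new}}=\{\emptyset, I\}$, $\mathcal{M}_{1}^{\mathrm{new}}=\sigma(\{[0,\alpha],[\alpha,1]\})$, and for $n\geq 2$ as the $\sigma$-algebra generated by the images of the filtrations attached to $F$ and $\tilde F$ under the affine rescalings $x\mapsto x/\alpha$ on $[0,\alpha]$ and $x\mapsto (x-\alpha)/(1-\alpha)$ on $[\alpha,1]$. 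For $n\geq 2$, the increments $\Delta F_{n}^{\mathrm{new}}$, $\Delta G_{n}^{\mathrm{new}}$ coincide pointwise with the (rescaled-variable) increments of the originals, so $|\Delta F_{n}^{\mathrm{new}}|=|\Delta G_{n}^{\mathrm{new}}|$ follows immediately from the admissibility of $(F,G)$ and $(\tilde F,\tilde G)$.

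For the expectation identity, a change of variables yields $\mathbb{E}(F\bullet\tilde F)=\alpha\mathbb{E}F+(1-\alpha)\mathbb{E}\tilde F$, and analogously for $G\bullet\tilde G$ and $|F\bullet\tilde F|^{p}$; substituting the triples attached to $\s{y}$ and $\tilde{\s{y}}$ recovers exactly the triple $(y_{1}-y_{2},y_{1}+y_{2},y_{3})$ associated to $\alpha\s{y}+(1-\alpha)\tilde{\s{y}}$. The only nontrivial check is the jump condition at level $n=1$. A direct computation gives, on the atom $[0,\alpha]$,
\begin{align*}
\Delta F_{1}^{\mathrm{new}} &= (1-\alpha)\bigl[(y_{1}-\tilde y_{1})-(y_{2}-\tilde y_{2})\bigr],\\
\Delta G_{1}^{\mathrm{new}} &= (1-\alpha)\bigl[(y_{1}-\tilde y_{1})+(y_{2}-\tilde y_{2})\bigr],
\end{align*}
and analogous expressions on $[\alpha,1]$ with $-\alpha$ in place of $1-\alpha$. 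Their absolute values coincide if and only if $(y_{1}-\tilde y_{1})(y_{2}-\tilde y_{2})=0$, i.e.\ $y_{1}=\tilde y_{1}$ or $y_{2}=\tilde y_{2}$, which is precisely the assumption that $\s{y},\tilde{\s{y}}$ lie in a common positive or negative domain.

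There is no genuine obstacle here: the lemma is a bookkeeping verification. The geometric role of the hypothesis is to force the vector $(\Delta F_{1}^{\mathrm{new}},\Delta G_{1}^{\mathrm{new}})$ to lie on one of the two diagonals $\{x=y\}$ or $\{x=-y\}$, which is exactly what is required for $G\bullet\tilde G$ to remain a martingale transform of $F\bullet\tilde F$ across the newly inserted root level.
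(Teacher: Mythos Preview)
Your argument is correct and complete: the paper itself gives no proof, merely declaring the lemma ``obvious,'' so what you have written is exactly the verification one would supply to fill in the details. Your computation of the level-$1$ increments and the identification of the condition $(y_{1}-\tilde y_{1})(y_{2}-\tilde y_{2})=0$ with the positive/negative domain hypothesis is precisely the point of the lemma.
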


Let $(F,G)$ be an admissible pair for a point $\s{y}$, and let $(\tilde{F},\tilde{G})$ be an admissible pair for a point $\tilde{\s{y}}$. Let $\hat{\s{y}}$ be a point which belongs to the chord joining the points $\s{y}$ and $\tilde{\s{y}}$. 

\begin{Rem}
It is clear that if $(F^{+},G^{+})$ is admissible for a point $(y^{+}_{2},y^{+}_{3})$ and $(F^{-}, G^{-})$ is admissible for a point $(y^{-}_{2},y^{-}_{3})$ then an $\alpha-$ concatenation of these pairs is admissible for the point $(y_{2},y_{3}) = \alpha\cdot (y_{2}^{+},y^{+}_{3})+(1-\alpha)\cdot (y_{2}^{-},y_{3}^{-})$.
\end{Rem}

Now we are ready to construct $\eps$-extremizers in $\Omega_{3}$. The main idea is that these functions $\Psi$ and $\Bell$ are very similar: they obey almost the same properties. Moreover,  foliation plays crucial role in the contraction of $\eps-$ extremizers.  

\subsection{Case $s_{0}\leq y_{p}$.}

We want to find $\eps$-extremizers for the points in $\Omega_{3}$.

{\bf Extremizers in the  domain $\Omega(\Theta_{\mathrm{cup}}((c,s_{0}],g))$}.

Pick any  $y=(y_{2},y_{3}) \in \Omega(\Theta_{\mathrm{cup}}((c,s_{0}],g))$. Then there exists a segment $\ell(y) \in \Theta_{\mathrm{cup}}((c,s_{0}],g)$. Let $y^{+}=(s,g(s))$ and $y^{-}=(a(s),g(a(s))$ be the endpoints of $\ell(y)$ in $\Omega_{3}$.  We know $\eps$-extremizers at these points $y^{+},y^{-}$. Indeed, we can take the following $\eps$-extremizers $(F^{+},G^{+})=(1-s,1+s)$ and $(F^{-},G^{-})=(1-a(s),1+a(s))$ (i.e. constant functions). Consider an $\alpha-$concatenation $(F^{+}\bullet F^{-},G^{+}\bullet G^{-})_{\alpha}$, where $\alpha$ is chosen so that $y=\alpha y^{+}+(1-\alpha)y^{-}$.  We have  
\begin{align*}
&\Psi[(F^{+}\bullet F^{-},G^{+}\bullet G^{-})_{\alpha}] = \alpha \Psi(F^{+},G^{+})+(1-\alpha)\Psi(F^{-1},G^{-})>\\
& \alpha \Bell(y^{+}) + (1-\alpha) \Bell(y^{-})-\eps =\Bell(y) -\eps.
\end{align*}
The last equality follows from the linearity of $\Bell$ on $\ell(y)$.

{\bf Extremizers on the vertical line $(-1,y_{3})$, $y_{3}\geq h(s_{0})$}.

Now we are going to find $\eps$-extremizers for the points $(-1,y_{3})$ where $y_{3}\geq h(s_{0})$.  We use a similar idea mentioned in~\cite{VaVo1} (see proof of Lemma~3).
We define the functions $(F,G)$ recursively:

\begin{align*}
G(t)=
\begin{cases} 
-w &  0 \leq t < \eps;\\
\gamma \cdot g \left(\frac{t-\eps}{1-2\eps}\right)& \eps \leq t \leq 1-\eps;\\
w & 1-\eps < t \leq 1;\\
\end{cases}
\end{align*}

\begin{align*}
F(t)= 
\begin{cases} 
d_{-} &  0 \leq t < \eps;\\
\gamma \cdot f \left(\frac{t-\eps}{1-2\eps}\right)& \eps \leq t \leq 1-\eps;\\
d_{+} & 1-\eps < t \leq 1;\\
\end{cases}
\end{align*}
 where the nonnegative constants $w,d_{-},d_{+},\gamma$ will be obtained from the requirement $\mathbb{E}(F,G,|F|^{p})=(2,0,y_{3})$ and the fact that $G$ is the martingale transform of $F$. Surely $\av{G}{[0,1]}=0$. Condition $\av{F}{[0,1]}=2$ means that 
 \begin{align}\label{summa}
 (d_{-}+d_{+})\eps + 2\gamma (1-2\eps) = 2.
 \end{align}
 Condition $\av{|F|^{p}}{[0,1]}=y_{3}$ implies that 
 \begin{align}\label{summalp}
 y_{3} = \frac{\eps (d_{+}^{p}+d_{-}^{p})}{1-(1-2\eps)\gamma^{p}}.
 \end{align}
 Now we use the condition $|F_{0}-F_{1}|=|G_{0}-G_{1}|$. In the first step we split the interval $[0,1]$ at the point $\eps$ with the requirement $F_{0}-F_{1}=G_{0}-G_{1}$, from which obtain  $w=2-d_{-}$. In the second step we split at the point $1-\eps$ with the requirement $F_{1}-F_{2}=G_{2}-G_{1}$, obtaining $w=2\gamma-d_{+}$. From these two conditions we obtain $d_{-}+d_{+}=2(1+\gamma)-2w$, and by substituting in~(\ref{summa}) we find the $\gamma$
 \begin{align*}
 \gamma = 1+\frac{\eps w}{1-\eps}.
 \end{align*}
 Now we investigate what happens as $\eps$ tends to zero. Our aim will be to focus on the limit value $\lim_{\eps \to 0}w = w_{0}$.
  We have $1-(1-2\eps)\gamma^{p} \approx \eps(2-wp).$ So~(\ref{summalp}) becomes
 \begin{align}\label{konecxordy}
 y_{3} = \frac{\eps (d_{+}^{p}+d_{-}^{p})}{1-(1-2\eps)\gamma^{p}} \to \frac{2(2-w_{0})^{p}}{2-w_{0}p}.
 \end{align}
 Note that for $w_{0}=1+s$ equation (\ref{konecxordy}) is the same as (\ref{aes}). 
 By direct calculations we see that as $\eps \to 0$ we have 
 \begin{align*}
 \av{(G^{2}+\tau^{2}F^{2})^{p/2}}{[0,1]}=\frac{\eps[(w^{2}+\tau^{2}d_{-}^{2})^{p/2}+(w^{2}+\tau^{2}d_{+}^{2})^{p/2}]}{1-(1-2\eps)\gamma^{p}} \to \frac{2f(w_{0}-1)}{2-w_{0}p}.
 \end{align*}
 Now we are going to calculate the value $\Bell(-1,h(s))$ where $h(s)=y_{3}$. From (\ref{neim1}) we have 
 \begin{align*}
 \Bell(-1,h(s)) = h(s)t_{2}(s) - \frac{2}{p}t_{1}(s).
 \end{align*}
 By using  (\ref{bdcondition}) we express $t_{1}$ via $t_{2}$, also because of (\ref{aes}) and (\ref{fromsurf}) we have 
 \begin{align*}
 & \Bell(-1,h(s)) = h(s)t_{2}(s) - \frac{2}{p}t_{1}(s) = h(s)t_{2} - \frac{2}{p}(f'-t_{2}g')=\\
 &t_{2}(h(s)+\frac{2}{p}g')-f' \frac{2}{p}=\frac{f(s)-(s-y_{p})f'(s)}{g(s)-(s-y_{p})g'(s)}\left(\frac{2g}{p(y_{p}-s)}+\frac{2}{p}g'\right)-f'\frac{2}{p} = \frac{2}{p}\left[ \frac{f(s)}{y_{p}-s}\right] = \frac{2(2-w_{0})^{p}}{2-w_{0}p}.
 \end{align*} 
 Thus we obtain the desired result 
 \begin{align*}
 \av{(G^{2}+\tau^{2}F^{2})^{p/2}}{[0,1]} \to \Bell(-1,y_{3}) \quad \text{as}\quad  \eps \to 0.
 \end{align*}

{\bf Extremizers in the domain $\Omega(\Theta([s_{0},y_{p}),g)).$}

Pick any point $y=(y_{2},y_{3})\in \Omega(\Theta([s_{0},y_{p}],g))$. Then there exists a segment $\ell(y) \in \Theta([s_{0},y_{p}],g)$. Let $y^{+}$ and $y^{-}$ be the endpoints of this segment such that $y^{+}  = (-1,\tilde{y}_{3})$ for some $\tilde{y}_{3} \geq h(s_{0})$ and $y^{-} = (\tilde{s},g(\tilde{s}))$ for some $\tilde{s}\in [y_{p},s_{0})$. We remind the reader that we know $\eps$-extremizers for the points $(s,g(s))$ where  $s \in [s_{0},1]$, and we know $\eps$-extremizers  on the vertical line $(-1,y_{3})$ where $y_{3} \geq h(s_{0})$.  Therefore, as in the case of a cup, taking the appropriate $\alpha-$concatenation of these $\eps$-extremizers and using the fact that $\Bell$ is linear on $\ell(y)$,  we find an $\eps$-extremizer at point $y$. 

{\bf Extremizers in the domain $\operatorname{Ang}(s_{0}).$}

Pick any $y=(y_{1},y_{2}) \in \operatorname{Ang}(s_{0})$. There exist the points $y^{+} \in \ell^{+}$, $y^{-} \in \ell^{-}$, where $\ell^{+}=\ell^{+}(s_{0},g(s_{0})) \in \Theta([s_{0},y_{p}),g)$ and $\ell^{-}=\ell^{-}(s_{0},g(s_{0})) \in \Theta((c,s_{0}],g)$, such that $y=\alpha y^{+}+(1-\alpha) y^{-}$ for some $\alpha \in [0,1]$. We know $\eps$-extremizers at the points $y^{+}$ and $y^{-}$. Then by taking an $\alpha-$concatenation of these extremizers and using the linearity of $\Bell$ on $\operatorname{Ang}(s_{0})$ we can obtain an $\eps$-extremizer at the point $y$. 

{\bf Extremizers in the domain $\Omega(\Theta([y_{p},1],g)).$}
 
 Finally, we consider the domain of vertical segments $\Omega(\Theta[y_{p},1],g)$. Pick any point  $y=(y_{2},y_{3}) \in \Omega(\Theta[y_{p},1])$. Take an arbitrary point $y^{+}=(-1,y^{+}_{3})$ where $y_{3}^{+}$ is sufficiently large such that $y = \alpha y^{+}+(1-\alpha)y^{-}$ for some $\alpha \in (0,1)$ and some $y^{-}=(y_{2}^{-},y_{3}^{-})$ such that $(1,y^{-}_{2},y_{3}^{-}) \in \partial \Omega_{1}$. Surely, $y^{+}, y^{-}$ belong to a positive domain. Condition $(1,y^{-}_{2},y_{3}^{-}) \in \partial \Omega_{1}$ implies that we know an $\eps$-extremizer $(F^{-},G^{-})$ at the point $y^{-}$ (these are constant functions). We also know an $\eps$-extremizer at the point $y^{+}$. 
 Let $(F^{+}\bullet F^{-}, G^{+}\bullet G^{-})_{\alpha}$ be an $\alpha-$concatenation of these extremizers. Then 
 \begin{align*}
 \Psi[(F^{+}\bullet F^{-}, G^{+}\bullet G^{-})_{\alpha}] >\alpha \Bell(y^{+})+(1-\alpha) \Bell(y^{-})-\eps.
 \end{align*} 
 Note that the condition $y = \alpha y^{+}+(1-\alpha)y^{-}$ implies that 
\begin{align*}
\alpha = \frac{y_{3} - \frac{y_{2}}{y_{2}^{-}}y_{3}^{-}}{y_{3}^{+}+\frac{y_{3}^{-}}{y_{2}^{-}}}. 
\end{align*}
Recall that $\Bell(y_{2},g(y_{2}))=f(y_{2})$ and $\Bell(y^{+})=f(s)+t_{1}(s)(-1-s)+t_{2}(s)(y_{3}^{+}-g(s))$, where $s\in [s_{0},y_{p}]$ is such that a segment $\ell(s,g(s)) \in \Theta([s_{0},y_{p}),g)$ has an endpoint $y^{+}$.

Note that  as $y_{3}^{+} \to \infty$  all terms remain bounded; moreover, $\alpha \to 0,$ $y^{-} \to (y_{2},g(y_{2}))$ and $s \to y_{p}$. This means that 
\begin{align*}
&\lim_{y_{3}^{+}\to \infty } \alpha \Bell(y^{+})+(1-\alpha) \Bell(y^{-})-\eps = \\
&\lim_{y_{3}^{+}\to \infty }  t_{2}(s)y_{3}^{+} \left(\frac{y_{3} - \frac{y_{2}}{y_{2}^{-}}y_{3}^{-}}{y_{3}^{+}+\frac{y_{3}^{-}}{y_{2}^{-}}} \right) + f(y_{2})-\eps =t_{2}(y_{p})(y_{3} - g(y_{2}))+f(y_{2}) -\eps.
\end{align*}
We recall that $t_{2}(s) =t_{2}(y_{p})$ for $s \in [y_{p},1]$. Then  
\begin{align*}
\Bell(y) = f(y_{2})+t_{2}(s(y))(y_{3}-g(y_{2})) = f(y_{2})+t_{2}(y_{p})(y_{3}-g(y_{2})).
\end{align*}
Thus, if we choose $y_{3}^{+}$ sufficiently large then we can obtain a $2\eps$-extremizer for the point $y$. 
   
\subsection{Case $s_{0}> y_{p}$.}

In this case we have $s_{0} \geq y_{p}$ (see Figure~\ref{fig:vvv}). 
This case is a little bit more complicated than the previous one. Construction of $\eps$-extremizers  $(F,G)$ will be similar to the one presented in~\cite{RVV}.

We need a few more definitions. 

\begin{Def}
Let $(F,G)$ be an arbitrary pair of functions.  Let $(y_{2},g(y_{2})) \in \Omega_{3}$ and let $J$ be a subinterval of $[0,1]$. 
We define a new pair $(\tilde{F}, \tilde{G})$ as follows: 
\begin{align*}
(\tilde{F},\tilde{G})(x) = 
\begin{cases}
(F,G)(x) & x \in [0,1]\setminus J\\
(1-y_{2},1+y_{2}) & x \in J. 
\end{cases}
\end{align*}
We will refer to the new pair $(\tilde{F},\tilde{G})$ as \emph{ putting the constant $(y_{2},g(y_{2}))$ on the interval $J$ for the pair $(F,G)$}
\end{Def} 


It is worth mentioning that sometimes the new pair $(\tilde{F}, \tilde{G})$ we will denote by the same symbol $(F,G)$. 

\begin{Def}
We say that the pairs $(F_{\alpha}, G_{\alpha})$, $(F_{1-\alpha}, G_{1-\alpha})$ are obtained from the pair $(F,G)$ by splitting at the point $\alpha \in (0,1)$ if 
\begin{align*}
(F_{\alpha}, G_{\alpha}) = (F,G)(x\cdot \alpha ) \quad x \in [0,1];\\
(F_{1-\alpha}, G_{1-\alpha}) = (F,G)(x\cdot (1-\alpha)+\alpha) \quad x \in [0,1];
\end{align*}
\end{Def}
It is clear that $\Psi(F,G) = \alpha \Psi(F_{\alpha},G_{\alpha}) + (1-\alpha)\Psi(F_{1-\alpha}, G_{1-\alpha})$. Also note that if $(F_{\alpha}, G_{\alpha})$, $(F_{1-\alpha}, G_{1-\alpha})$ are obtained from the pair $(F,G)$ by splitting at the point $\alpha \in (0,1)$, then $(F,G)$ is an $\alpha-$concatenation of the pairs $(F_{\alpha}, G_{\alpha})$, $(F_{1-\alpha}, G_{1-\alpha})$. Thus, such operations as splitting and concatenation are opposite operations. 

Instead of explicitly presenting an admissible pair $(F,G)$ and showing that it is an $\eps$-extremizer, we present an algorithm which constructs the admissible pair, and we show that the result is an $\eps$-extremizer. 

 By the same explanations as in the case $s_{0}\leq y_{p}$, it is enough to construct an $\eps$-extremizer $(F,G)$ on the vertical line $y_{2}=-1$ of the domain $\Omega_{3}$. Moreover, linearity of $\Bell$ implies that  for any $A>0$, it is enough to construct $\eps$-extremizers for the points $(-1,y_{3})$, where  $y_{3} \geq A$.   Pick any point $(-1,y_{3})$ where $y_{3}=y_{3}^{(0)} > g(-1)$. Linearity of $\Bell$ on $\operatorname{Ang}(s_{0})$ and  direct calculations (see~(\ref{bellmanfunction}), (\ref{fromcup})) show that 
\begin{align}
\Bell(-1,y_{3}) = f(-1)+t_{3}(s_{0})(y_{3}-g(-1))= f(-1)+(y_{3}-g(-1))\frac{f'(-1)-f'(s_{0})}{g'(-1)-g'(s_{0})}. \label{value}
\end{align} 

We describe the first iteration. 
Let $(F,G)$ be an admissible pair for the point $(-1,y_{3})$, whose  explicit expression  will be described during the algorithm. For a pair $(F,G)$ we put a constant $(s_{0},g(s_{0}))$ on an interval $[0,\eps]$ where the value $\eps \in (0,1)$  will be given later. Thus we obtain a new pair $(F,G)$ which we denote by the same symbol. We want $(F,G)$ to be an admissible pair for the point $(-1,y_{3})$. Let the pairs $(F_{\eps},G_{\eps})$, $(F_{1-\eps},G_{1-\eps})$ be obtained from the pair $(F,G)$ by splitting at point $\eps$. It is clear  that $(F_{\eps},G_{\eps})$ is an admissible pair for the point $(s_{0},g(s_{0}))$. We want $(F_{1-\eps},G_{1-\eps})$ to be an admissible pair for the point $P=(\tilde{y}_{2},\tilde{y}_{3})$ so that 
\begin{align}\label{convhull}
&(-1,y_{3}) = \eps (s_{0},g(s_{0})) + (1-\eps) P.
\end{align}
Therefore we require
\begin{align}\label{certili}
P = \left(\frac{-1-\eps s_{0}}{1-\eps}, \frac{y_{3}-\eps g(s_{0})}{1-\eps} \right).
\end{align}
So we make the following simple observation: if $(F_{1-\eps}, G_{1-\eps})$ were an admissible pair for the point $P$, then $(F,G)$ (which is an $\eps-$concatenation of the pairs $(1-s_{0},1+s_{0})$ and $(F_{1-\eps}, G_{1-\eps})$) would be an admissible pair for the point $(-1,y_{3})$. Explanation of this observation is simple: note that these pairs $(F_{1-\eps}, G_{1-\eps})$ and $(1-s_{0},1+s_{0})$ are admissible pairs for the points $P$ and $(s_{0},g(s_{0}))$ which belong to a positive domain (see (\ref{convhull})); therefore,  the rest immediately follows from Lemma~\ref{marttr}. 
So we want to construct the admissible pair  $(F_{1-\eps}, G_{1-\eps})$ for the point (\ref{certili}).

We recall Lemma~\ref{PSI}  which implies that the pair $(F_{1-\eps}, G_{1-\eps})$ is admissible for the point $\left(1, \frac{-1-\eps s_{0}}{1-\eps}, \frac{y_{3}-\eps g(s_{0})}{1-\eps} \right)$  if and only if the pair $(\tilde{F},\tilde{G})$ where 
\begin{align*}
(F_{1-\eps},-G_{1-\eps}) =\frac{1+\eps s_{0}}{1-\eps}(\tilde{F},\tilde{G})
\end{align*}
is admissible for a point $W=\left(1,\frac{\eps-1}{1+\eps s_{0}}, \frac{(y_{3}-\eps g(s_{0}))}{(1+\eps s_{0})^{p}}\cdot  (1-\eps)^{p-1}\right).$
So, if we find the admissible pair $(\tilde{F},\tilde{G})$ then we automatically find the admissible pair $(F,G)$. 

 Choose  $\eps$ small enough so that  
$\left(\frac{\eps-1}{1+\eps s_{0}}, \frac{(y_{3}-\eps g(s_{0}))}{(1+\eps s_{0})^{p}}\cdot  (1-\eps)^{p-1}\right) \in \Omega_{3}$ and  
\begin{align*}
\left(\frac{\eps-1}{1+\eps s_{0}}, \frac{(y_{3}-\eps g(s_{0}))}{(1+\eps s_{0})^{p}}\cdot  (1-\eps)^{p-1}\right) = \delta (s_{0},g(s_{0})) + (1-\delta) (-1, y_{3}^{(1)} ) 
\end{align*}
for some $\delta \in (0,1)$ and $y_{3}^{(1)} \geq g(-1)$. 
Then 
\begin{align}
&\delta = \frac{\eps }{1+\eps s_{0}} = \eps + O(\eps^{2}) \nonumber\\
&y_{3}^{(1)}  = \frac{\frac{(y_{3}-\eps g(s_{0}))}{(1+\eps s_{0})^{p}}\cdot  (1-\eps)^{p-1}  - \frac{\eps}{1+\eps s_{0}} g(s_{0})}{1-\frac{\eps}{1+\eps s_{0}}} = y_{3}(1-\eps(p+ps_{0}-2))-2\eps g(s_{0}) + O(\eps^{2}).\label{approx}
\end{align}

For the pair $(\tilde{F},\tilde{G})$ we put a constant $(s_{0},g(s_{0}))$ on the interval $[0,\delta]$. We split the new pair $(\tilde{F},\tilde{G})$ at point $\delta$ so we get the pairs $(\tilde{F}_{\delta},\tilde{G}_{\delta})$ and $(\tilde{F}_{1-\delta},\tilde{G}_{1-\delta})$. We make a similar observation as above.  It is clear that if we know the admissible pair $(\tilde{F}_{1-\delta},\tilde{G}_{1-\delta})$  for the  point $(-1,y_{3}^{(1)})$ then we can obtain an admissible pair $(\tilde{F},\tilde{G})$ for the point $\left(\frac{\eps-1}{1+\eps s_{0}}, \frac{(y_{3}-\eps g(s_{0}))}{(1+\eps s_{0})^{p}}\cdot  (1-\eps)^{p-1}\right)$.  Surely $(\tilde{F},\tilde{G})$ is a $\delta-$concatenation of the pairs $(1-s_{0},1+s_{0})$ and $(\tilde{F}_{1-\delta},\tilde{G}_{1-\delta})$. 


We summarize the first iteration. We took $\eps \in(0,1)$, and we  started from the pair $(F^{(0)},G^{(0)})=(F,G)$, and  after one iteration  we came  to the pair $(F^{(1)},G^{(1)})=(\tilde{F}_{1-\delta},\tilde{G}_{1-\delta}).$ We showed that if $(F^{(1)},G^{(1)})$ is an admissible pair for the  point $(1,y_{3}^{(1)}),$ then the pair $(F^{(0)},G^{(0)})$ can be obtained from the pair  $(F^{(1)},G^{(1)})$; moreover, it is admissible for the point $(1,y_{3}^{(0)})$. 

Continuing these iterations, we obtain the sequence of numbers $\{ y_{3}^{(j)}\}_{j=0}^{N}$ and the sequence of pairs $\{(F^{(j)},G^{(j)}) \}_{j=0}^{N}$.  Let $N$ be such that $y_{3}^{(N)}\geq g(-1)$. It is clear that if $(F^{(N)},G^{(N)})$ is an admissible pair for the point $(-1,y_{3}^{(N)})$ then the pairs $\{(F^{(j)},G^{(j)}) \}_{j=0}^{N-1}$ can be determined uniquely, and, moreover, $(F^{(j)},G^{(j)})$ is an admissible pair for the  point $(-1,y_{3}^{(j)})$ for all $j=0,..,N-1$.
 
Note that  we can choose  sufficiently small $\eps\in (0,1)$,  and we can find  $N=N(\eps)$ such that $y_{3}^{(N)} = g(-1)$ (see (\ref{approx}), and recall that $s_{0} > y_{p}$).  In this case the admissible pair $(F^{(N)},G^{(N)})$ for the point $(-1,y_{3}^{(N)}) = (-1,g(-1))$ is a constant function, namely, $(F^{(N)},G^{(N)})=(2,0)$.  Now we try to find $N$ in terms of $\eps$, and we try to find the value of $\Psi(F^{(0)},G^{(0)})$. 

Condition (\ref{approx}) implies that $y_{3}^{(1)} =  y_{3}^{(0)}(1-\eps(p+ps_{0}-2))-2\eps g(s_{0}) +O(\eps^{2})$. We denote $\delta_{0}  = p+ps_{0}-2>0.$ Therefore, after the  $N$-th iteration we obtain
\begin{align*}
&y_{3}^{(N)} = (1-\eps \delta_{0})^{N}\left( y_{3}^{(0)}+ \frac{2 g(s_{0}) }{\delta_{0}} \right) -  \frac{2 g(s_{0}) }{\delta_{0}}+O(\eps).
\end{align*}
The requirement $y_{3}^{(N)}=g(-1)$ implies that 

\begin{align*}
(1-\eps \delta_{0})^{-N} =  \frac{y_{3}^{(0)}+\frac{2g(s_{0})}{\delta_{0}}}{g(-1)+\frac{2g(s_{0})}{\delta_{0}}}+O(\eps).
\end{align*}
This implies that $\limsup_{\eps \to 0} \eps \cdot N =  \limsup_{\eps \to 0} \eps \cdot N(\eps) <\infty$.
 Therefore, we get 
\begin{align}\label{znachenie}
e^{\eps \delta_{0} N} =  \frac{y_{3}^{(0)}+\frac{2g(s_{0})}{\delta_{0}}}{g(-1)+\frac{2g(s_{0})}{\delta_{0}}}+O(\eps).
\end{align}

Also note that
\begin{align*}
&\Psi(F^{(0)},G^{(0)})=\Psi(F,G) = \eps \Psi(F_{\eps},G_{\eps}) + (1-\eps) \Psi(F_{1-\eps},G_{1-\eps})=\\
&\eps f(s_{0}) + (1-\eps) \Psi(F_{1-\eps},G_{1-\eps})=\eps f(s_{0}) +
(1-\eps) \left(\frac{1+\eps s_{0}}{1-\eps}\right)^{p}\Psi (\tilde{F},\tilde{G})\\
&=\eps f(s_{0}) +(1-\eps)(1-\eps) \left(\frac{1+\eps s_{0}}{1-\eps}\right)^{p} \left[ \delta f(s_{0})+(1-\delta)\Psi(\tilde{F}_{1-\delta},\tilde{G}_{1-\delta}) \right]\\
&=2\eps f(s_{0}) + (1+\eps \delta_{0}) \Psi(F^{(1)},G^{(1)}) +O(\eps^{2}).
\end{align*}
Therefore, after the $N$-th iteration (and using the fact that $\Psi(F^{(N)},G^{(N)})=f(-1)$) we obtain
\begin{align}
&\Psi(F^{(0)},G^{(0)}) =  (1+\eps \delta_{0})^{N}\left(f(-1)+\frac{2f(s_{0})}{\delta_{0}}\right) - \frac{2f(s_{0})}{\delta_{0}} +O(\eps) =  \nonumber\\
&e^{\eps \delta_{0} N}\left(f(-1)+\frac{2f(s_{0})}{\delta_{0}}\right) - \frac{2f(s_{0})}{\delta_{0}}+O(\eps).\label{znachenie1}
\end{align}
The last equality follows from the fact that $\limsup_{\eps \to 0}\eps \cdot N(\eps) <\infty$.

Therefore (\ref{znachenie}) and (\ref{znachenie1}) imply that 
\begin{align*}
&\Psi(F^{(0)},G^{(0)}) = 
 \left( \frac{y_{3}^{(0)}+\frac{2g(s_{0})}{\delta_{0}}}{g(-1)+\frac{2g(s_{0})}{\delta_{0}}}\right)\left(f(-1)+\frac{2f(s_{0})}{\delta_{0}} \right) - \frac{2f(s_{0})}{\delta_{0}}+O(\eps)=\\
&f(-1)+(y_{3}^{(0)}-g(-1)) \left(\frac{  f(-1)+\frac{2f(s_{0})}{\delta_{0}} }{   g(-1)+\frac{2g(s_{0})}{\delta_{0}}}\right)+O(\eps).
\end{align*}

Now we recall (\ref{value}). So if we show that 
\begin{align}\label{bolo}
\frac{  f(-1)+\frac{2f(s_{0})}{\delta_{0}} }{   g(-1)+\frac{2g(s_{0})}{\delta_{0}}} = \frac{f'(-1)-f'(s_{0})}{g'(-1)-g'(s_{0})}
\end{align}
then (\ref{bolo}) will imply that $\Psi(F^{(0)},G^{(0)}) = \Bell(-1,y_{3}^{(0)}) +O(\eps)$. So choosing $\eps$ sufficiently small we can obtain the extremizer $(F^{(0)},G^{(0)})$ for the point $(-1,y_{3})$. Therefore, we need only to prove equality (\ref{bolo}). It will be convenient to make the following notations: set $f_{-}=f(-1)$, $f'_{-}=f'(-1)$, $f=f(s_{0})$, $f' = f'(s_{0})$, $g_{-}=g(-1)$, $g'_{-}=g'(-1)$, $g=g(s_{0})$ and $g' = g(s_{0})$. 
Then the equality (\ref{bolo}) turns into the following one
\begin{align}
\frac{\delta_{0}}{2} = \frac{fg'_{-}-fg'-f'_{-}g+f'g}{g'f_{-}-f'g_{-}}.
\end{align}
This simplifies into the following one
\begin{align*}
s_{0}-y_{p} = \frac{2}{p}\cdot \frac{fg'_{-}-fg'-f'_{-}g+f'g}{g'f_{-}-f'g_{-}} = \frac{fg'_{-}-fg'-f'_{-}g+f'g}{-g'f'_{-}+f'g'_{-}}
\end{align*}
which is true by (\ref{odno}). 
 
\section*{Acknowledgements}

I would like to express my deep gratitude to Professor A.~Volberg,  Professor V.~I.~Vasyunin, and Professor S.~V.~Kislyakov,  my research supervisors, for their patient guidance, enthusiastic encouragement, and useful critiques of this work. I would also like to thank A.~Reznikov, for his assistance in finding $\eps$-extremizers for the Bellman function.  
I would also like to extend my thanks to my colleagues and close friends P.~Zatitskiy, N.~Osipov, and D.~Stolyarov  for working together in Saint-Petersburg, and developing   a theory for minimal concave functions.

Finally, I wish to thank my parents for their support and encouragement throughout my study.

\end{document}